\newcommand{\compconj}[1]{\overline{#1}}
\newtheorem*{thm}{Theorem}
\newtheorem{theorem}{Theorem}[section]
\newtheorem{lema}[theorem]{Lemma}
\newtheorem{corollary}{Corollary}[theorem]
\theoremstyle{definition}
\newtheorem{defi}{Definition}
\numberwithin{equation}{section}
\newcommand{\ip}[2]{\langle #1, #2 \rangle}
\newcommand{\divides}{\mid}
\newcommand{\cay}{\operatorname{Cay}}
\newcommand{\real}{\operatorname{Re}}
\newcommand{\iu}{\mathbf{i}}
\newcommand{\spec}{\operatorname{Spec}}
\newcommand{\lcm}{\operatorname{lcm}}
\def \Zl {{\mathbb Z}}
\def \Nl {{\mathbb N}}
\def \Rl {{\mathbb R}}
\def \Zl {{\mathbb Z}}
\def \Ql {{\mathbb Q}}
\def \Cl {{\mathbb C}}
\def \ld {{\lambda}}
\def \eu {{\textbf{e}_u}}
\def \ev {{\textbf{e}_v}}
\def \Gn {\mathcal{G}_{\Zl_n}}
\def \Ga {\mathcal{G}_{\Zl_{2^t}}}
\def \Gb {\mathcal{G}_{\Zl_{3^t}}}
\def \Gc {\mathcal{G}_{\Zl_{5^t}}}
\def \Gp {\mathcal{G}_{\Zl_{p^t}}}
\title{Periodicity and perfect state transfer of Grover walks on quadratic unitary Cayley graphs}
\author{ Koushik Bhakta and Bikash Bhattacharjya\\
	Department of Mathematics\\
	Indian Institute of Technology Guwahati, India\\
	b.koushik@iitg.ac.in, b.bikash@iitg.ac.in }
\date{}
\begin{document}
	\maketitle
	
	\vspace{-0.3in}
	
	\begin{center}{\textbf{Abstract}}\end{center}
	\noindent The quadratic unitary Cayley graph $\mathcal{G}_{\mathbb{Z}_n}$ has vertex set $\mathbb{Z}_n: =\{0,1, \hdots ,n-1\}$, where two vertices  $u$ and $v$ are adjacent if and only if $u - v$ or $v-u$ is a square of some units in $\mathbb{Z}_n$. This paper explores the periodicity and perfect state transfer of Grover walks on quadratic unitary Cayley graphs. We determine all periodic quadratic unitary Cayley graphs. From our results, it follows that there are infinitely many integral as well as nonintegral graphs that are periodic. Additionally, we determine the values of $n$ for which the quadratic unitary Cayley graph $\Gn$ exhibits perfect state transfer.

	\vspace*{0.3cm}
	\noindent \textbf{Keywords.} Grover walk, quadratic unitary Cayley graph, periodicity, perfect state transfer\\
	\textbf{Mathematics Subject Classifications:} 05C50, 81Q99
	
	\section{Introduction}
	Quantum walks \cite{quantum} on graphs have attracted much attention in research over the past few decades due to their potential applications in quantum simulations, quantum algorithms and quantum cryptography. There are two types of quantum walks: the continuous-time quantum walk and the discrete-time quantum walk. A discrete-time quantum walk \cite{random, godsildct} is the quantum-mechanical analogue of the classical random walk, in which a particle moves in space in discrete steps according to specific rules. In a discrete-time quantum walk, the evolution of the quantum state of a particle is described by a unitary operator, which controls how the particle moves between different positions or vertices of a graph. This paper focuses on two specific aspects of the transport properties of discrete-time quantum walks: periodicity \cite{barr} and perfect state transfer \cite{zhan}. The phenomenon of quantum states returning to their initial states at a specific time is called periodicity. This can happen because the time evolution matrix of a quantum walk is unitary. Perfect state transfer refers to the phenomenon where quantum states can be transferred between specific quantum states in a quantum system with a probability of $1$. We focus on Grover walks \cite{spec}, a particular discrete-time quantum walk. Using algebraic graph-theoretic techniques, we study the periodicity and perfect state transfer of Grover walks on quadratic unitary Cayley graphs.

	Periodicity is one of the interesting properties of quantum walks, distinguishing it from classical random walks. The periodicity property of quantum walk on a graph holds significant practical implications in physics and information science.  Periodicity could help stabilize the network and protect it from minor external disruptions. Sarkar et al.~\cite{sarkar1} studied the periodicity of quantum walks on cycles using generalized Grover coins. The periodicity of discrete-time quantum walks  on Cayley graphs was explored over the dihedral group in \cite{sarkar2} and over the symmetric group in \cite{banerjee}. Higuchi et al. \cite{higuchi1} investigated the periodicity of Grover walks on complete graphs, complete bipartite graphs and strongly regular graphs. Kubota et al. \cite{bethetrees} characterized the periodicity of Grover walks on generalized Bethe trees. Additional studies on the periodicity of Grover walks can be found in \cite{ito, kubota, bipartite, yoshie1, yoshie2, yoshie3}. Periodicity is a rare property in quantum walks, so exploring new graphs for periodicity remains an important problem. 
	
	Quadratic unitary Cayley graphs are generalizations of the well-known Paley graphs, which are extensively studied in combinatorics and number theory (see~\cite{paley_number,quc}  and the references therein). These graphs are circulant, vertex-transitive, and possess rich algebraic structures.  Circulant graphs are useful in telecommunication networks, and distributed computation (see \cite{bermond}). Compared to Cayley graphs over non-abelian groups, the spectrum of quadratic unitary Cayley graphs has a simpler description. In this paper, we focus on using spectral properties  of the graph to explore periodicity and related properties of Grover walks on the graph. Our study on these graphs gives explicit analytic results and serves as a foundation for future studies on more complex structures. The following theorem represents our first main result. For terminology and proofs, refer to later sections.

	\begin{thm}[\em{Theorem} \ref{main1}]
		The quadratic unitary Cayley graph $\Gn$ is periodic if and only if  $n$ is either $2^\alpha3^\beta$ or  $2^\delta5^\gamma$ for some nonnegative integers $\alpha$, $\beta$, $\gamma$ and $\delta$ such that $\alpha+\beta\neq 0$,  $\gamma\geq1$ and $0\leq\delta\leq2$. 
	\end{thm}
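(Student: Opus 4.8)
The plan is to reduce the periodicity question to an eigenvalue condition on the Grover walk's time-evolution operator, then translate that into an arithmetic condition on $n$.

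The plan is to reduce periodicity of the Grover walk to a single arithmetic integrality condition on the adjacency eigenvalues, and then to verify that condition prime-power by prime-power. First I would invoke the spectral correspondence between the Grover time-evolution operator $U$ and the normalized adjacency matrix: for a connected $k$-regular graph the eigenvalues of $U$ are $\pm1$ together with $e^{\pm\iu\,\arccos(\lambda/k)}$ as $\lambda$ ranges over the adjacency spectrum. Since $\pm1$ are roots of unity, $U$ is periodic precisely when each $e^{\iu\,\arccos(\lambda/k)}$ is a root of unity, i.e. $\tfrac1\pi\arccos(\lambda/k)\in\Ql$ for every eigenvalue $\lambda$. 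Writing $\lambda/k=\cos(\pi r)$ with $r\in\Ql$, the number $2\lambda/k=\zeta+\zeta^{-1}$ for a root of unity $\zeta$, hence is an algebraic integer lying, with all its conjugates, in $[-2,2]$; conversely Kronecker's theorem shows this property characterizes such values. Because every eigenvalue of $\Gn$ satisfies $|\lambda|\le k$ and its Galois conjugates are again eigenvalues, the interval condition is automatic, so periodicity is equivalent to the clean requirement that $2\lambda/k$ be an algebraic integer for every eigenvalue $\lambda$ of $\Gn$, where $k$ is the degree.

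Next I would make the problem multiplicative. Writing $n=\prod_i p_i^{t_i}$ and using the Chinese Remainder Theorem, the connection set of $\Gn$ consists of the tuples $\pm(u_1^2,\dots,u_r^2)$ with each $u_i$ a unit and a single global sign, so the eigenvalues decompose through quadratic Gauss sums over the factors $\Zl_{p_i^{t_i}}$: each eigenvalue has the shape $\tfrac12\bigl(\prod_i G_i^{+}+\prod_i G_i^{-}\bigr)$ for conjugate Gauss sums $G_i^{\pm}$, and the degree factors as $k=\prod_i k_i$. I would compute these local ingredients explicitly: for $p$ odd the quadratic Gauss sums introduce a factor $\sqrt{p}$, so the local eigenvalue ratios live in $\Ql(\sqrt p)$, while the $2$-part behaves differently and must be handled by its own computation of the squares of units modulo $2^t$ and the resulting eigenvalues.

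With the local data in hand I would determine which prime powers are compatible with the integrality of $2\lambda/k$. For an odd prime $p$ one splits into $p\equiv3\pmod4$, where $-1$ is a nonresidue and $\Gp$ degenerates (for $t=1$ yielding $K_p$ with the fatal ratio $-2/(p-1)$), and $p\equiv1\pmod4$, where the Paley-type eigenvalues $\tfrac{-1\pm\sqrt p}{2}$ divided by the degree must be algebraic integers; a short discriminant computation forces $p\in\{3,5\}$ and shows that two distinct odd primes cannot coexist. This isolates the forms $2^\alpha3^\beta$ and $2^\delta5^\gamma$. The step I expect to be the main obstacle is pinning down the asymmetric bound $0\le\delta\le2$: one must track how the $2$-adic denominators coming from the eigenvalues of $\mathcal{G}_{\Zl_{2^t}}$ combine with the $\sqrt5$ arising from $\mathcal{G}_{\Zl_{5^\gamma}}$, and show the product eigenvalue divided by $k$ stays an algebraic integer exactly when $t\le2$, whereas the $\sqrt3$ from the $3$-part imposes no ceiling on $\alpha$. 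Carrying out this combined Gauss-sum valuation computation carefully, and confirming the boundary cases, is where the real work lies; the remaining bookkeeping then assembles into the stated classification.
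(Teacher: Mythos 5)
Your reduction is sound but takes a genuinely different route to the key number-theoretic criterion. The paper goes from Lemma \ref{m1} (periodicity iff every discriminant eigenvalue is the real part of a root of unity) to Lemma \ref{m2} and Theorem \ref{ls}: it asks when $(2x)^{\deg f}f\left((x+x^{-1})/2\right)$ factors into cyclotomic polynomials and then enumerates the admissible rational and quadratic eigenvalues explicitly. You instead invoke Kronecker's theorem: $2\lambda/k=\zeta+\zeta^{-1}$ for a root of unity $\zeta$ iff $2\lambda/k$ is an algebraic integer with all conjugates in $[-2,2]$, and the interval condition is automatic because the conjugates of an adjacency eigenvalue are again adjacency eigenvalues of the same $k$-regular graph. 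Your ``$2\lambda/k$ is an algebraic integer'' test is uniform in the degree of the eigenvalue and avoids the residual clause $\spec_P^{\overline{\Delta}}(G)\subset\Re$ in Theorem \ref{ls}; the paper's enumeration, in exchange, hands over the explicit admissible eigenvalue lists that are reused verbatim in the sufficiency checks of Lemmas \ref{k11}--\ref{k12}. Where you carry out the local analysis it agrees with the paper: the rational ratio $-2/(p-1)$ eliminates $p\equiv3\pmod 4$, $p>3$ (note the same ratio recurs for every $t\ge1$, not only the complete-graph case $t=1$ that you cite), and the norm $-4/(p-1)$ of $2(-1\pm\sqrt p)/(p-1)$ eliminates $p\equiv1\pmod4$, $p>5$.

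The one place you stop short is exactly the computation that produces the stated asymmetry, namely why $\delta\le2$ is forced for $n=2^\delta5^\gamma$ while $\alpha$ is unbounded for $n=2^\alpha3^\beta$; you flag it as ``where the real work lies'' without doing it, and likewise for the exclusion of $3$ and $5$ together. In your framework both are short norm computations: for $n=2^\alpha5^\gamma$ with $\alpha\ge3$ the discriminant acquires the eigenvalue $\mu=\tfrac{1}{\sqrt2(1+\sqrt5)}$, and $2\mu=\tfrac{\sqrt{10}-\sqrt2}{4}$ has minimal polynomial $x^4-\tfrac32x^2+\tfrac14\notin\Zl[x]$, so it is not an algebraic integer; for $3\cdot5\mid n$ the eigenvalue $\tfrac18+\tfrac{\sqrt5}{8}$ appears and $\tfrac{1+\sqrt5}{4}$ has norm $-\tfrac14$. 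By contrast, every discriminant eigenvalue of $\mathcal{G}_{\Zl_{2^\alpha3^\beta}}$ is a cosine of a rational multiple of $\pi$, so its double is automatically an algebraic integer. These are the decisive steps (the paper does them in Lemma \ref{k12}, the lemma preceding Lemma \ref{lastp}, and Lemma \ref{lastp}), so your argument is not complete until they are written down, but nothing in your approach obstructs them.
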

	
	Perfect state transfer in continuous-time quantum walks has been studied for the past two decades.  See \cite{periodic, polytime, pal, soffia} and the references therein. In \cite{singh}, the authors studied perfect state transfer on hypercubes and its implementation. There are recent studies of perfect state transfer on discrete-time quantum walks (see \cite{pst_symm}). \v{S}tefa\v{n}ák and Skoupý~\cite{dqw3} studied perfect state transfer in discrete-time quantum walks on complete bipartite graphs. Zhan~\cite{zhan} provided an infinite family of $4$-regular circulant graphs that exhibit perfect state transfer. Kubota and Segawa~\cite{vertextype} studied perfect state transfer of Grover walks between vertex-type states and provided a complete characterization of this quantum phenomenon on complete multipartite graphs. In \cite{bhakta1}, we studied perfect state transfer of Grover walks on unitary Cayley graphs. The following is our second main theorem, which determines all quadratic unitary Cayley graphs exhibiting perfect state transfer.
	\begin{thm}[\em{Theorem} \ref{main2}]
		The quadratic unitary Cayley graph $\Gn$ exhibits perfect state transfer if and only if $n\in\{ 2^\alpha3^\beta,~10,~20\},$ where $\alpha$ and $\beta$ are nonnegative integers such that $1\leq \alpha\leq3$ and $0\leq\beta\leq1$.
	\end{thm}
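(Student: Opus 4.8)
The plan is to combine the periodicity classification of Theorem \ref{main1} with the spectral theory of the Grover walk. The first step is the standard fact that perfect state transfer (PST) forces periodicity of the transported state: if the time-evolution operator $U$ satisfies $U^\tau\eu=\gamma\,\ev$ with $|\gamma|=1$, then the time-reversal symmetry of the Grover walk yields $U^\tau\ev=\bar\gamma\,\eu$, so that $U^{2\tau}\eu=\eu$. Since $\Gn=\cay(\Zl_n,S)$ is vertex-transitive, every vertex-type state then shares this period, and as these states span the inherited subspace of $U$, the operator $U$ is periodic. Thus every admissible $n$ appears in Theorem \ref{main1}, i.e. $n=2^\alpha3^\beta$ or $n=2^\delta5^\gamma$, and it remains to sieve this explicit family.

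Next I would install the spectral framework. By the spectral mapping theorem for Grover walks, the nontrivial eigenvalues of $U$ are $e^{\pm\iu\theta_j}$ with $\theta_j=\arccos(\lambda_j/k)$, where $k$ is the common degree and $\lambda_j=\sum_{s\in S}\w^{js}$ are the adjacency eigenvalues of $\Gn$; the additional $\pm1$ birth eigenvalues lie orthogonal to the vertex-type states and may be ignored. Because $\Gn$ is an abelian Cayley graph, its eigenvectors are the characters $\chi_j(x)=\w^{jx}$, and the arc-space eigenvectors of $U$ factor through them. Vertex-transitivity reduces PST to transfer from the vertex $0$, and the usual matching argument (if $0\to c$ then $x\to x+c$ for all $x$, so $2c\equiv0\Mod{n}$) forces the target to be the unique involution $c=n/2$; in particular $n$ must be even, which already discards the odd members $3^\beta$ and $5^\gamma$ of the periodic list.

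The core of the argument is then a phase condition. Expanding $\e_0$ in the eigenbasis of $U$, PST to $n/2$ at time $\tau$ is equivalent to demanding a single global phase $\gamma$ with $e^{\iu\theta_j\tau}=\gamma\,\chi_j(n/2)=\gamma\,(-1)^j$ for every $j$ in the eigenvalue support of $\e_0$ (together with the conjugate relation). Periodicity already gives $\theta_j/\pi\in\Ql$, so this becomes a purely arithmetic system linking the commensurable angles $\theta_j$ to the sign pattern $(-1)^j$ prescribed by the Gauss-sum eigenvalues $\lambda_j$. I would evaluate the $\lambda_j$ through the Chinese Remainder decomposition $\Zl_n\cong\prod_i\Zl_{p_i^{a_i}}$, reducing both the angles and the sign pattern to the prime-power factors and thereby to the arithmetic of the primes $2,3,5$.

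The main obstacle is the sieving itself: periodicity holds across the entire families $2^\alpha3^\beta$ and $2^\delta5^\gamma$, yet PST survives only for the small exponents recorded in the statement, so the bulk of the work is pinpointing where the phase alignment breaks. I expect the delicate point to be controlling how the $2$-adic valuation of $n$ enters the support of $\e_0$: as the power of $2$ grows, new angles $\theta_j$ appear whose phases $e^{\iu\theta_j\tau}$ cannot be simultaneously forced onto the required $\pm\gamma$ pattern, which is what caps the exponent at $\alpha\le3$, forces $\beta\le1$, and leaves only $n=10,20$ in the $5$-family, while eliminating such cases as $n=16$, $n=18$, and $n=50$. The finitely many surviving values $n\in\{2,4,8,6,12,24,10,20\}$ would then be confirmed directly by exhibiting an explicit transfer time $\tau$ in each case, completing the characterization.
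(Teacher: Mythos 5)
Your overall strategy is the same as the paper's: periodicity as a necessary condition (Theorem \ref{thm2}) reduces the problem to the list of Theorem \ref{main1}, the involution/translation argument gives condition (i) of Theorem \ref{thl}, and your phase condition $e^{\iu\theta_j\tau}=\gamma(-1)^j$ on the support of the vertex state is exactly the Chebyshev criterion of Theorem \ref{thl}, namely $T_\tau(\mu_j)=\pm1$ together with the alternating requirement $T_\tau(\mu_j)\neq T_\tau(\mu_{j+1})$. The genuine gap is that the decisive step --- eliminating all but finitely many members of the two infinite periodic families --- is only announced, not carried out: you write that you ``expect'' the obstruction to come from the growth of the $2$-adic valuation of $n$ and that the phases ``cannot be simultaneously forced'' onto the pattern $\pm\gamma$, but you never exhibit a mechanism. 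The actual mechanism is elementary and is not primarily a $2$-adic phenomenon: by Theorem \ref{quc1}, $\chi_a(Q_{p^t})$ vanishes for every $a$ outside a small exceptional subset of $\Zl_{p^t}$, so as soon as $n$ grows (namely $2^\alpha$ with $\alpha\ge 4$, or $\beta\ge 2$, or $\gamma\ge 2$) one can exhibit two \emph{consecutive} indices $j,j+1$ in $\Zl_n$ with $\mu_j=\mu_{j+1}=0$; then $T_\tau(\mu_j)=T_\tau(\mu_{j+1})$ for every $\tau$, i.e.\ your sign pattern $\gamma(-1)^j$ is violated identically (here $\theta_j=\theta_{j+1}=\pi/2$). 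In particular $n=18$ and $n=50$ are killed by the $3$-part and the $5$-part respectively, so the heuristic you propose (watching the power of $2$) would not lead you to the computations needed in Cases 3 and 4 of the paper's proof.

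A second, smaller gap is the positive half: you assert that the eight surviving values ``would then be confirmed directly by exhibiting an explicit transfer time,'' but no such verification is given. The paper handles $n\in\{2,4,6,8,12,24,10\}$ by observing that $\Gn$ is $K_2$ or an even cycle (Lemmas \ref{complete} and \ref{cycle}), and handles $n=20$ by first pinning down the only possible time $\tau=10$ (since $0$ and $\cos\frac{\pi}{5}$ are both discriminant eigenvalues, forcing $\tau\in 2\Zl\cap 5\Zl$ within the period $20$) and then checking conditions (ii) and (iii) of Theorem \ref{thl} explicitly. Without these verifications and without the consecutive-zero computation above, the proposal is a correct plan but not yet a proof.
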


	Our work introduces new families of graphs for which periodicity and perfect state transfer can be
	fully characterized, addressing a gap in the literature, particularly in the context of the discrete-
	time quantum walks. We provided infinitely many examples of graphs that exhibit these desirable
	quantum properties.

	The paper is organized as follows. Section 2 describes several matrices related to Grover walks and provides definitions of the periodicity and perfect state transfer of Grover walks. Section 3 introduces the definition of the quadratic unitary Cayley graph and describes the eigenvalues of the quadratic unitary Cayley graph. In Section 4, we first determine when the roots of a monic rational polynomial are real parts of roots of unity. Using this, we characterize regular periodic graphs in terms of their spectrum. We then determine all periodic quadratic unitary Cayley graphs. In Section 5, we determine the values of $n$ for which $\Gn$ exhibits perfect state transfer.

	\section{Grover walk}
	Let $G$ be a graph with vertex set $V(G)$ and edge set $E(G)$. This paper considers only finite, simple, and connected graphs. We denote the elements of $E(G)$ as $uv$, where $u,v\in V$, $u\neq v$, and $uv$ and $vu$ represent the same edge. If $uv$ is an edge of a graph $G$, then the ordered pairs $(u,v)$ and $(v,u)$ are referred to as the \emph{arcs} of $G$ associated with the edge $uv$. We define $\mathcal{A}(G):=\{(u, v), (v, u):uv\in E(G) \}$, the set of all symmetric arcs of $G$. The vertices $u$ and $v$ are called the \emph{origin} and \emph{terminus}, respectively, of the arc $(u,v)$. Let $a$ be an arc of $G$. We use $o(a)$ and $t(a)$ to denote the origin and terminus of $a$, respectively. Thus for $a=(u,v)$, we have $o(a)=u$ and $t(a)=v$.  The inverse arc of $a$, denoted $a^{-1}$, is defined as the arc  $(v,u)$.

	Now, we define some matrices needed for the definition of Grover walks. The \emph{boundary matrix} $N:=N(G)\in \mathbb{C}^{V(G)\times \mathcal{A}(G)}$ of $G$ is defined by 
	$$N_{xa}=\frac{1}{\sqrt{\deg x}}\delta _{x, t(a)},$$
	where $\delta_{a,b}$ is the Kronecker delta function and $\deg x$ is the degree of the vertex $x$. The \emph{coin matrix} $C:=C(G)\in \mathbb{C}^{\mathcal{A}(G) \times \mathcal{A}(G)}$ of $G$ is defined by $$C=2N^*N-I,$$ where $I$ is the identity matrix of appropriate size. The coin matrix is also known as the Grover coin.
	The \emph{shift matrix} $R:=R(G)\in \mathbb{C}^{\mathcal{A}(G) \times \mathcal{A}(G)}$ of $G$ is defined by $$R_{ab}=\delta_{a,b^{-1}}.$$ Define the \emph{time evolution matrix} $U:=U(G)\in \mathbb{C}^{\mathcal{A}(G)\times \mathcal{A}(G)}$ of $G$ by $$U=RC.$$  A discrete-time quantum walk on a graph $G$ is characterized by a unitary matrix that operates on the complex-valued functions defined on the arcs of $G$. The discrete-time quantum walks corresponding to the matrix $U$ are commonly referred to as \emph{Grover walks}. The entries of the time evolution matrix of Grover walks can be calculated as 
	$$U_{ab} = \left\{ \begin{array}{ll}
		\frac{2}{\deg t(b)}-1 &\mbox{ if }
		a=b^{-1} \\ 
		\frac{2}{\deg t(b)} &\mbox{ if }
		t(b)=o(a)~\text{and}~a\neq b^{-1} \\
		0 &\textnormal{ otherwise.}
	\end{array}\right.$$ 
	See Figure \ref{action_u} for a visual action of the time evolution matrix of the Grover walks. Consider $\Psi\in \Cl^{\mathcal{A}(G)}$ such that the $(u,v)$-th entry of $\Psi$ is $1$, and $0$ elsewhere. The vector $\Psi$ is shown in the left graph of Figure \ref{action_u}. The vector $U\Psi$ is shown in the right graph of Figure \ref{action_u}.
	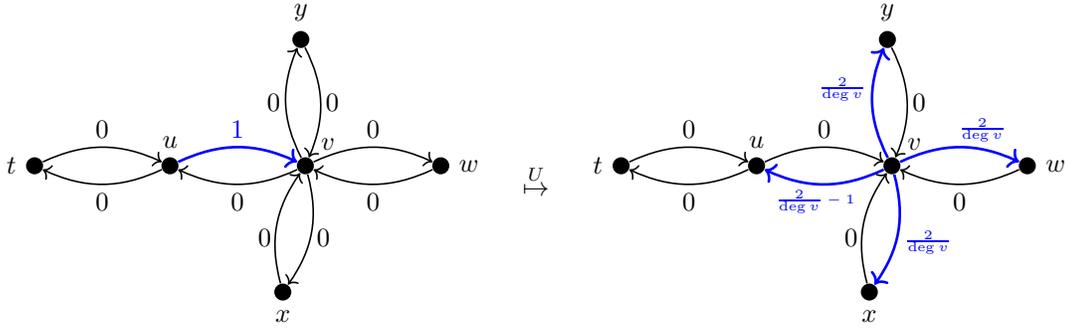
\begin{figure}[h!]
		
		\begin{center}
			\begin{tikzpicture}
				[scale = 0.6,
				slim/.style={circle,fill=black, inner sep = 0.8mm},
				]

				\node[slim] (t) at (0,0) [label = left:$t$] {};
				\node[slim] (u) at (3,0) [label = above:$u$] {};
				\node[slim] (v) at (6,0) [label = above right:$v$] {};
				\node[slim] (w) at (9,0) [label = right:$w$] {};
				\node[slim] (x) at (5.5,-2.8) [label = below:$x$] {};
				\node[slim] (y) at (5.9,2.8) [label = above:$y$] {};

				\draw[ bend left = 25, line width = .6pt][->] (u) to  (t);
				\draw[ bend left = 25, line width = .6pt][->] (t) to  (u);
				
				\draw[ bend left = 25, line width = .6pt][->] (v) to  (u);
				\draw[blue, bend left = 25, line width = 1pt][->] (u) to  (v);
				
				\draw[ bend left = 25, line width = .6pt][->] (v) to  (y);
				\draw[ bend left = 25, line width = .6pt][->] (y) to  (v);
				
				\draw[ bend left = 25, line width = .6pt][->] (x) to  (v);
				\draw[ bend left = 25, line width = .6pt][->] (v) to  (x);
				
				\draw[ bend left = 25, line width = .6pt][->] (v) to  (w);
				\draw[ bend left = 25, line width = .6pt][->] (w) to  (v);
				
				\draw (4.5,.8) node[blue] {$1$};
				\draw (4.5,-.8) node {$0$};
				\draw (1.5,.8) node {$0$};
				\draw (1.5,-.8) node {$0$};
				\draw (7.5,.8) node {$0$};
				\draw (5.1,-1.6) node {$0$};
				\draw (6.4,-1.6) node {$0$};
				\draw (7.5,-.8) node {$0$};
				\draw (5.3,1.4) node {$0$};
				\draw (6.6,1.4) node {$0$};

			\end{tikzpicture}
			\raisebox{18mm}{$\quad \overset{U}{\mapsto} \quad$}
			\begin{tikzpicture}
				[scale = 0.6,
				slim/.style={circle,fill=black, inner sep = 0.8mm},
				]

				\node[slim] (t) at (0,0) [label = left:$t$] {};
				\node[slim] (u) at (3,0) [label = above:$u$] {};
				\node[slim] (v) at (6,0) [label = above right:$v$] {};
				\node[slim] (w) at (9,0) [label = right:$w$] {};
				\node[slim] (x) at (5.5,-2.8) [label = below:$x$] {};
				\node[slim] (y) at (5.9,2.8) [label = above:$y$] {};

				\draw[ bend left = 25, line width = .6pt][->] (u) to  (t);
				\draw[ bend left = 25, line width = .6pt][->] (t) to  (u);
				
				\draw[blue, bend left = 25, line width = 1pt][->] (v) to  (u);
				\draw[ bend left = 25, line width = .6pt][->] (u) to  (v);
				
				\draw[blue, bend left = 25, line width = 1pt][->] (v) to  (y);
				\draw[ bend left = 25, line width = .6pt][->] (y) to  (v);
				
				\draw[ bend left = 25, line width = .6pt][->] (x) to  (v);
				\draw[blue, bend left = 25, line width = 1pt][->] (v) to  (x);
				
				\draw[blue, bend left = 25, line width = 1pt][->] (v) to  (w);
				\draw[ bend left = 25, line width = .6pt][->] (w) to  (v);
				
				\draw (8,.8) node[blue] {\tiny$\tfrac{2}{\deg v}$};
				\draw (4.9,1.7) node[blue] {\tiny$\tfrac{2}{\deg v}$};
				\draw (6.8,-1.7) node[blue] {\tiny$\tfrac{2}{\deg v}$};
				\draw (4.3,-.8) node[blue] {\tiny$\tfrac{2}{\deg v}-1$};
				\draw (5.1,-1.6) node {$0$};
				\draw (1.5,.8) node {$0$};
				\draw (1.5,-.8) node {$0$};
				\draw (4.5,.8) node {$0$};
				\draw (7.5,-.8) node {$0$};
				\draw (6.6,1.4) node {$0$};
			\end{tikzpicture}
			\caption{Action of $U$} \label{action_u}
		\end{center} 	
	\end{figure}
	
	The \emph{discriminant} $P:=P(G)\in \mathbb{C}^{V(G)\times V(G)}$ of $G$ is defined by $$P=NRN^*.$$
	We refer the reader to \cite{qq} for more details about the matrices $N,~R,~U$ and $P$. The adjacency matrix $A:=A(G)\in \Cl^{V(G)\times V(G)}$ of $G$ is defined by
	$$A_{uv} = \left\{ \begin{array}{rl}
		1 &\mbox{ if }
		uv\in E(G) \\ 
		0 &\textnormal{ otherwise.}
	\end{array}\right.$$   If  $G$ is a regular graph, then the matrices $P$ and $A$ are closely related.
	\begin{lema}\emph{\cite{qq}}\label{reg}
		Let $A$ and $P$ be the adjacency and discriminant matrices of a $k$-regular graph, respectively. Then $P=\frac{1}{k}A$. 
	\end{lema}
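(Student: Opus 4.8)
The plan is to verify the identity $P=\frac{1}{k}A$ entrywise, computing the $(u,v)$-entry of $P=NRN^*$ directly from the definitions of $N$ and $R$. Since every entry of $N$ is a nonnegative real number, the conjugation in $N^*$ is harmless, and I expect the Kronecker deltas appearing in $N$ and $R$ to collapse the double sum to a single surviving term.

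First I would expand the product as
$$P_{uv}=(NRN^*)_{uv}=\sum_{a,b\in\mathcal{A}(G)}N_{ua}\,R_{ab}\,\compconj{N_{vb}}.$$
Substituting $N_{ua}=\frac{1}{\sqrt{\deg u}}\,\delta_{u,t(a)}$, $R_{ab}=\delta_{a,b^{-1}}$ and $\compconj{N_{vb}}=\frac{1}{\sqrt{\deg v}}\,\delta_{v,t(b)}$, the factor $R_{ab}$ forces $b=a^{-1}$, so only the part of the sum with $b=a^{-1}$ survives.

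Next I would use the relation $t(a^{-1})=o(a)$ to rewrite the condition $t(b)=v$ as $o(a)=v$. Together with $t(a)=u$, this means that the only arcs $a$ contributing to the sum are those with origin $v$ and terminus $u$; because $G$ is simple, there is exactly one such arc, namely $a=(v,u)$, and it exists precisely when $uv\in E(G)$. Hence $P_{uv}=\frac{1}{\sqrt{\deg u}\,\sqrt{\deg v}}$ if $uv\in E(G)$, and $P_{uv}=0$ otherwise.

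Finally, invoking $k$-regularity to set $\deg u=\deg v=k$ gives $P_{uv}=\frac{1}{k}$ when $uv\in E(G)$ and $P_{uv}=0$ otherwise, which is exactly $\frac{1}{k}A_{uv}$; therefore $P=\frac{1}{k}A$. The only real care needed is the bookkeeping of the arc correspondence---in particular the relation $t(a^{-1})=o(a)$ and the fact that simplicity of $G$ guarantees a unique contributing arc per edge---so I anticipate no genuine obstacle beyond this routine indexing.
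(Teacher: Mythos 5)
Your computation is correct: the delta functions in $N$ and $R$ collapse the double sum to the single arc $a=(v,u)$, giving $P_{uv}=\frac{1}{\sqrt{\deg u\,\deg v}}A_{uv}$, which reduces to $\frac{1}{k}A_{uv}$ under regularity. The paper states this lemma as a cited result from the literature and gives no proof of its own, so there is nothing to compare against; your direct entrywise verification is the standard argument and is complete.
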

	For a matrix $M$ associated to a graph $G$, we define $\spec_M(G)$ as the set of all distinct eigenvalues of $M$. A graph $G$ is called \emph{integral} if all the eigenvalues of its adjacency matrix are integers; otherwise, it is called \emph{nonintegral}.
	\begin{defi}  
		The Grover walk on a graph $G$ is \emph{periodic} if there exists a positive integer \( \tau \) such that the time evolution operator $U$ of $G$ satisfies $U^\tau = I$. 
	\end{defi}  
	For brevity, we say $G$ is periodic to mean that the Grover walk on $G$ is periodic.   The smallest positive integer $\tau$ such that $U^\tau =I$ is termed the \emph{period} of $G$, and $G$ is called \emph{$\tau$-periodic}.

	Note that $U$ is a diagonalizable matrix. Hence, the periodicity of a graph can be easily found by the eigenvalues of its time evolution matrix.
	\begin{lema}\emph{\cite{mixedpaths}} \label{period}
		A graph $G$ is a $\tau$-periodic graph if and only if $ \eta^\tau  =1$ for every $\eta \in \spec_U(G)$, and  there is some $\eta\in\spec_U(G)$ such that $\eta^j\neq 1$ for each  $j\in\{1,\hdots,\tau-1\}$.
	\end{lema}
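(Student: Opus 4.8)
The whole proof runs through the spectral decomposition of $U$, so my plan is first to record the one algebraic fact that does all the work and then to read the two conditions off it. Because $U$ is diagonalizable — indeed $U$ is unitary, so its distinct eigenvalues $\eta\in\spec_U(G)$ all have modulus $1$ — I would write $U=\sum_{\eta}\eta\,E_\eta$ with nonzero spectral idempotents satisfying $E_\eta E_{\eta'}=\delta_{\eta,\eta'}E_\eta$ and $\sum_\eta E_\eta=I$. Since these projections are linearly independent, for every positive integer $k$ we have
\[
U^k=\sum_{\eta}\eta^{\,k}E_\eta,\qquad\text{so that}\qquad U^k=I\iff \eta^{\,k}=1\ \text{for all}\ \eta\in\spec_U(G).
\]
This equivalence is the engine of the whole argument, and it is the only place diagonalizability is used; it converts every statement about powers of $U$ into a statement about the eigenvalues being roots of unity.

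For the backward implication, suppose both stated conditions hold. The first condition together with the reduction gives $U^\tau=I$, so $G$ is periodic with some period $p$, and $p\divides\tau$, hence $p\le\tau$. If $p<\tau$, then $p\in\{1,\dots,\tau-1\}$; applying $U^p=I$ to the eigenvalue $\eta_0$ supplied by the second condition forces $\eta_0^{\,p}=1$, contradicting $\eta_0^{\,p}\neq 1$. Therefore $p=\tau$ and $G$ is $\tau$-periodic.

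For the forward implication, suppose $G$ is $\tau$-periodic, i.e.\ $\tau$ is least with $U^\tau=I$. The reduction immediately yields $\eta^\tau=1$ for all $\eta$, which is the first condition. The second condition is the delicate one and the step I would treat most carefully. Minimality of $\tau$ says precisely that $U^j\neq I$ for every $j\in\{1,\dots,\tau-1\}$, and by the reduction each such $j$ comes with some eigenvalue whose $j$-th power is not $1$; read as this minimality statement, it is exactly what pins $\tau$ down as the period. The main obstacle is the quantifier order in the second condition: taken literally it asks for a \emph{single} eigenvalue $\eta$ with $\eta^{\,j}\neq 1$ simultaneously for all $j<\tau$, that is, an eigenvalue of multiplicative order exactly $\tau=\lcm\{\operatorname{ord}(\eta):\eta\in\spec_U(G)\}$. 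Exhibiting one such eigenvalue is not automatic for an arbitrary finite set of roots of unity, so the genuine content characterizing the period is the per-exponent statement ($U^j\neq I$ for each $j<\tau$), and matching this correctly against the stated form of the condition is the real point of the forward direction; everything else is immediate bookkeeping on top of the reduction above.
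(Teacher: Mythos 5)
First, a point of comparison that is moot: the paper gives no proof of this lemma at all --- it is quoted from \cite{mixedpaths} --- so your argument has to stand entirely on its own. Your spectral reduction ($U^k=I$ if and only if $\eta^k=1$ for every $\eta\in\spec_U(G)$, valid because $U$ is unitary, hence diagonalizable) is correct, and your backward implication built on it is complete and correct. The gap is the forward implication, precisely the point you flag and then leave unresolved; and the reason you could not resolve it is that, as literally stated, it is false. The condition ``there is some $\eta\in\spec_U(G)$ such that $\eta^j\neq1$ for each $j\in\{1,\hdots,\tau-1\}$'' demands a single eigenvalue of multiplicative order exactly $\tau$, and a $\tau$-periodic graph need not have one. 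Take $G=K_{2,2,2}$, the octahedron: it is $4$-regular and non-bipartite with $b_1=|E(G)|-|V(G)|+1=7$, its discriminant eigenvalues are $1,0,-\tfrac12$, so by Lemma \ref{evu} its time evolution matrix has distinct eigenvalues $\left\{1,\,-1,\,\pm\iu,\,e^{\pm 2\pi\iu/3}\right\}$, whose multiplicative orders are $1,2,4,4,3,3$. Then $U^{12}=I$, while for every $j<12$ either $3\nmid j$ (so $(e^{2\pi\iu/3})^j\neq1$) or $4\nmid j$ (so $\iu^j\neq1$); hence $G$ is $12$-periodic by definition, yet no eigenvalue has order $12$. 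So no argument can close the gap you identified: the forward direction fails for this graph.

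What is true --- and what your reduction already proves in full --- is the statement with the quantifiers exchanged: $G$ is $\tau$-periodic if and only if $\eta^\tau=1$ for every $\eta\in\spec_U(G)$ and, \emph{for each} $j\in\{1,\hdots,\tau-1\}$, there is some $\eta\in\spec_U(G)$ (allowed to depend on $j$) with $\eta^j\neq1$; equivalently, $\tau$ is the least common multiple of the orders of the eigenvalues of $U$. This per-exponent version is exactly what the rest of the paper actually relies on (it is Corollary \ref{periodic}, and it is how the periods in Theorem \ref{gnp} are computed, e.g.\ the period $\lcm(3,4)=12$ that your octahedron example also exhibits), so the misstatement is harmless downstream. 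But as a referee's judgement on your proposal: your backward direction is right, your diagnosis of the quantifier-order problem is right, and the correct conclusion you should have drawn is not that the matching step is ``the real point'' still to be argued, but that the lemma as stated needs to be repaired before any proof of it can exist.
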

	From the previous lemma, we observe that the period of a periodic graph $G$ can be determined from the eigenvalues of $U$.
	\begin{corollary} \emph{\cite{bhakta1}}\label{periodic}
		Let $\eta_1, \hdots, \eta_t$ be the distinct eigenvalues of the time evolution matrix of a periodic graph $G$. Let $k_1, \hdots, k_t$ be the least positive integers such that $\eta_1 ^{k_1}=1, \hdots,\eta_t^{k_t}=1$. Then $G$ is periodic and the period of $G$ is $\lcm(k_1, \hdots, k_t)$.
	\end{corollary}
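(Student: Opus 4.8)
The plan is to reduce everything to the statement of Lemma~\ref{period} together with the elementary theory of orders of roots of unity. First I would record that since $G$ is periodic, there is a positive integer $\tau$ with $U^\tau = I$; because $U$ is diagonalizable (as noted just before Lemma~\ref{period}), this forces $\eta_i^\tau = 1$ for every distinct eigenvalue $\eta_i \in \spec_U(G)$. In particular each $\eta_i$ is a root of unity, so its multiplicative order is finite, and this order is exactly the least positive integer $k_i$ with $\eta_i^{k_i} = 1$. Thus the numbers $k_1, \hdots, k_t$ in the statement are well defined.

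The key number-theoretic observation I would then isolate is that, for a root of unity $\eta_i$ of order $k_i$ and any positive integer $m$, one has $\eta_i^m = 1$ if and only if $k_i \divides m$. Writing $L = \lcm(k_1, \hdots, k_t)$, this gives $\eta_i^L = 1$ for every $i$, since $k_i \divides L$ by definition of the least common multiple.

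Next I would translate these eigenvalue conditions back to the matrix $U$. Using diagonalizability once more, $U^m = I$ holds if and only if $\eta_i^m = 1$ for every $i \in \{1, \hdots, t\}$, i.e.\ if and only if $k_i \divides m$ for all $i$, which is equivalent to $L \divides m$. Applying this with $m = L$ shows $U^L = I$, so by Lemma~\ref{period} the graph $G$ is periodic with $L \in \spec_U(G)$ raised to the power $L$ equal to $1$ for all eigenvalues; applying it with an arbitrary $m$ satisfying $U^m = I$ shows $L \divides m$, hence $m \geq L$. Therefore $L$ is the least positive integer with $U^L = I$, which is precisely the period of $G$, giving $\operatorname{period}(G) = \lcm(k_1, \hdots, k_t)$.

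There is no real obstacle here: the statement is essentially a repackaging of Lemma~\ref{period} in terms of the individual orders of the eigenvalues, and the only ingredient beyond that lemma is the divisibility characterization $\eta_i^m = 1 \iff k_i \divides m$ for roots of unity. The one point that deserves a careful phrasing is the reliance on diagonalizability of $U$ to pass between the condition $U^m = I$ and the simultaneous eigenvalue conditions $\eta_i^m = 1$, but this is exactly the remark recorded in the text preceding Lemma~\ref{period}.
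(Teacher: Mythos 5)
Your argument is correct and follows exactly the route the paper intends: the corollary is presented as a direct consequence of Lemma~\ref{period} (the paper itself gives no proof, deferring to \cite{bhakta1}), and your derivation via diagonalizability of $U$ together with the divisibility characterization $\eta_i^m=1 \iff k_i \divides m$ is the standard way to obtain it. The only blemish is the garbled phrase ``with $L\in\spec_U(G)$ raised to the power $L$'' --- $L$ is of course not an eigenvalue, and you clearly mean that every eigenvalue raised to the power $L$ equals $1$ --- but this is a slip of wording, not of mathematics.
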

	For a nonnegative integer $k$, $\{a\}^k$ denotes the multi-set $\{a,\hdots,a\}$, where the element $a$ repeats $k$ times. Additionally, $\sqrt{-1}$ is denoted by $\iu$. The spectral mapping theorem of Grover walks states the following result.
	
	%%%%%%%%%%%%%%%%%%%%%%%%%%%%%%%%%%%%%%%%%%%%%%%%%%%%%%%%%%%%%%%%%%%%%%
	\begin{lema} \emph{\cite{higu}} \label{evu}
		Let $\mu_1, \hdots,\mu_n$ be the eigenvalues of the discriminant of a graph $G$. Then, the multi-set of eigenvalues of the time evolution matrix is $$\left\{e^{\pm \iu \arccos(\mu_j)}:j\in\{1,\hdots,n\}\right\}\cup \{1\}^{b_1} \cup \{ -1\}^{b_1-1+1_B},$$ where $b_1=|E(G)|-|V(G)|+1$, and $1_B=1$ or $0$ according as $G$ is bipartite or not.	
	\end{lema}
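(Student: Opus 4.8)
The plan is to exploit the intertwining between the time evolution matrix $U=RC$ and the discriminant $P=NRN^*$ through the boundary matrix $N$. The first step is to record the structural facts that drive everything. Since each arc has a unique terminus, a direct computation gives $NN^*=I$, so $N^*N$ is the orthogonal projection onto $\mathcal{L}:=\operatorname{Im}(N^*)$ and the coin $C=2N^*N-I$ is the self-adjoint unitary acting as $+I$ on $\mathcal{L}$ and as $-I$ on $\mathcal{L}^\perp=\ker N$. Likewise $R$ is a self-adjoint unitary with $R^2=I$, and a short calculation of entries shows $P=NRN^*$ is exactly the normalized adjacency matrix $D^{-1/2}AD^{-1/2}$; in particular $P$ is Hermitian with $\spec_P(G)\subseteq[-1,1]$, so each $\arccos(\mu_j)$ is well defined. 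Because $U=RC$ is a product of two unitaries it is itself unitary, hence diagonalizable with eigenvalues on the unit circle.

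Next I would transfer each eigenvalue of $P$ to a pair of eigenvalues of $U$. Given $Pv=\mu v$ with $v\neq 0$, put $w=N^*v\in\mathcal{L}$. Since $Cw=w$ we get $Uw=Rw$, and using $NRN^*=P$ one computes $U(Rw)=2\mu\,Rw-w$. Thus $\operatorname{span}\{w,Rw\}$ is $U$-invariant and, in the basis $(w,Rw)$, $U$ is represented by $\left(\begin{smallmatrix} 0 & -1 \\ 1 & 2\mu \end{smallmatrix}\right)$, whose characteristic polynomial $\lambda^2-2\mu\lambda+1$ has roots $\mu\pm\iu\sqrt{1-\mu^2}=e^{\pm\iu\arccos(\mu)}$. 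A Cauchy--Schwarz argument using $\ip{w}{Rw}=v^*Pv=\mu\|w\|^2$ shows that $w$ and $Rw$ are linearly independent precisely when $\mu\in(-1,1)$; for $\mu=1$ one forces $Rw=w$ and for $\mu=-1$ one forces $Rw=-w$, so these degenerate cases contribute a single eigenvalue $1$, resp. $-1$. Running this construction over an orthonormal eigenbasis of $P$ identifies the action of $U$ on $\mathcal{M}:=\mathcal{L}+R\mathcal{L}$ and accounts for the family $\{e^{\pm\iu\arccos(\mu_j)}\}$.

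It then remains to analyze $U$ on $\mathcal{M}^\perp=\ker N\cap R(\ker N)$. Here $Cw=-w$, so $Uw=-Rw$, i.e. $U=-R$ on this $R$-invariant subspace; hence its spectrum consists only of $\pm 1$, eigenvalue $+1$ coming from the $(-1)$-eigenspace of $R$ and eigenvalue $-1$ from the $(+1)$-eigenspace of $R$. The decisive step, and the part I expect to be most delicate, is the bookkeeping of these two multiplicities. Splitting $\Cl^{\mathcal{A}(G)}$ into the symmetric and antisymmetric arc functions $\mathcal{R}_+$ and $\mathcal{R}_-$ (the $\pm 1$ eigenspaces of $R$), I would identify $\mathcal{R}_-\cap\ker N$ with the cycle space of $G$, of dimension $b_1=|E(G)|-|V(G)|+1$, giving eigenvalue $1$ with multiplicity $b_1$; and identify $\mathcal{R}_+\cap\ker N$ with the kernel of the unsigned incidence operator, whose dimension is $|E(G)|-|V(G)|+1_B=b_1-1+1_B$ because the unsigned incidence matrix of a connected graph has rank $|V(G)|-1_B$. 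This is exactly where the bipartite correction $1_B$ enters. Finally I would close the argument with a dimension count, checking $\dim\mathcal{M}+\dim\mathcal{M}^\perp=2|E(G)|=|\mathcal{A}(G)|$, which confirms that the listed multiset exhausts $\spec_U(G)$ with no double counting: the coincidences $e^{\pm\iu\arccos(\pm1)}\in\{1,-1\}$ are absorbed into the $\mathcal{M}$-count, while the summands $\{1\}^{b_1}$ and $\{-1\}^{b_1-1+1_B}$ arise entirely from $\mathcal{M}^\perp$.
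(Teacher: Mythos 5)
The paper does not prove this lemma at all: it is imported verbatim from \cite{higu}, so there is no internal proof to compare against. Your argument is correct, and it is essentially the standard proof of the spectral mapping theorem for Grover walks: the splitting into the inherited subspace $\mathcal{M}=\mathcal{L}+R\mathcal{L}$ and the birth subspace $\mathcal{M}^\perp=\ker N\cap R(\ker N)$, the two-dimensional $U$-invariant subspaces $\operatorname{span}\{w,Rw\}$ with matrix $\bigl(\begin{smallmatrix}0&-1\\1&2\mu\end{smallmatrix}\bigr)$ and characteristic polynomial $\lambda^2-2\mu\lambda+1$, and the identification of the $\pm1$ eigenspaces of $U=-R$ on $\mathcal{M}^\perp$ with the cycle space and the kernel of the unsigned incidence matrix. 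Three points should be made explicit to turn the sketch into a complete proof. First, the per-eigenvector computation only shows containment of $\{e^{\pm\iu\arccos(\mu_j)}\}$ in $\spec_U(G)$; to get the exact multiplicities on $\mathcal{M}$ you should record that for an orthonormal eigenbasis $v_1,\hdots,v_n$ of $P$ one has $\ip{N^*v_j}{N^*v_k}=\delta_{jk}$ and $\ip{N^*v_j}{RN^*v_k}=\mu_k\delta_{jk}$, so the subspaces $\operatorname{span}\{w_j,Rw_j\}$ attached to distinct $j$ are mutually orthogonal and their sum is all of $\mathcal{M}$. Second, your dimension counts for the birth part ($b_1$ for the circulation space, $|E(G)|-|V(G)|+1_B$ for the incidence kernel, via rank $|V(G)|-1_B$) require $G$ to be connected; the paper assumes this throughout, but the hypothesis should be invoked. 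Third, your Cauchy--Schwarz treatment of the degenerate cases $\mu_j=\pm1$ is in fact more careful than the lemma as stated: read literally, the displayed multiset has $2|E(G)|+1+1_B$ elements, exceeding $|\mathcal{A}(G)|=2|E(G)|$; the intended convention, which your argument implements, is that for $\mu_j=\pm1$ the coincident pair $e^{\pm\iu\arccos(\mu_j)}$ is counted once. With these details filled in, the bookkeeping $\dim\mathcal{M}^\perp=b_1+(b_1-1+1_B)$ and $\dim\mathcal{M}=2|V(G)|-1-1_B$ closes the argument exactly as you indicate.
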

	
	%%%%%%%%%%%%%%%%%%%%%%%%%%%%%%%%%%%%%%%%%%%%%%%%%%

	%%%%%%%%%%%%%%%%%%%%%%%%%%%%%%%%%%%%%%%%%%%%
	For $\Phi,\Uppsi\in \Cl^{\mathcal{A}(G)}$, the Euclidean inner product of $\Phi$ and $\Uppsi$ is $\ip{\Phi}{\Uppsi}$. Let $G$ be a graph and $U$ be its time evolution matrix. A vector $\Phi\in \Cl^{\mathcal{A}(G)}$ is called a \emph{state} if $\ip{\Phi}{\Phi}=1$. 
	
	We say \emph{perfect state transfer} occurs in $G$ from a state $\Phi$ to another state $\Psi$ at time $\tau\in \Nl$ if there exists a unimodular complex number $\gamma$ such that $U^\tau\Phi=\gamma \Psi.$ See~\cite{zhan} for more details.

	A state $\Phi\in \Cl^{\mathcal{A}(G)}$ is called a \emph{vertex-type} of a graph $G$ if there exists a vertex $u$ such that $\Phi=N^* \eu$, where $\eu$ is the unit vector defined by $(\eu)_x=\delta_{u,x}$. We denote $\Sigma$ as the set of all vertex-type states, that is, $\Sigma =\{N^*\eu: u\in V(G)\}$.  In this paper, we consider only vertex-type states. We refer the reader to \cite{vertextype} for more details about vertex-type states.
	\begin{defi}
		A graph exhibits \emph{perfect state transfer} from vertex \( u \) to vertex \( v \) at time \( \tau \) if there exists a unimodular complex number \( \gamma \) such that the graph exhibits perfect state transfer from the state $N^*\eu$ to the state $N^*\ev$, that is, $U^\tau N^*\eu=\gamma N^*\ev$.
	\end{defi}

	The next lemma is a consequence of Lemma \ref{evu} and Corollary \ref{periodic}. This result helps in characterizing the periodicity of a graph based on its discriminant eigenvalues. For a complex number $z$, we denote $\real(z)$ as the real part of $z$, that is, $\real(z)=\frac{z+\bar{z}}{2}$. Let $\Re$ be the set of the real parts of roots of unity, that is, $\Re=\left\{\real(z): z\in \Cl~\text{and}~z^n=1 ~\text{for some positive integer $n$}\right\}$.

	\begin{lema}\label{m1}
		Let $G$ be a graph with discriminant $P$. Then, $G$ is periodic if and only if $\spec_P(G)\subset\Re$. 
	\end{lema}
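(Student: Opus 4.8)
The plan is to chain together the spectral mapping theorem (Lemma~\ref{evu}) with the eigenvalue characterization of periodicity (Lemma~\ref{period} or Corollary~\ref{periodic}), and to reduce everything to the single arithmetic question of when $e^{\iu\arccos(\mu)}$ is a root of unity. First I would recall that by Lemma~\ref{period}, $G$ is periodic if and only if every eigenvalue $\eta$ of the time evolution matrix $U$ is a root of unity, i.e.\ $\eta^{\tau}=1$ for some common $\tau$. So the whole statement rests on translating ``every $\eta\in\spec_U(G)$ is a root of unity'' into ``every $\mu\in\spec_P(G)$ lies in $\Re$.''

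The forward direction is the easy half. Suppose $G$ is periodic. By Lemma~\ref{evu}, the eigenvalues of $U$ are exactly the numbers $e^{\pm\iu\arccos(\mu_j)}$ together with some copies of $\pm 1$. Since $G$ is periodic, each such $e^{\iu\arccos(\mu_j)}$ must be a root of unity. Writing $z=e^{\iu\arccos(\mu_j)}$, a root of unity, its real part is $\real(z)=\cos(\arccos(\mu_j))=\mu_j$, so $\mu_j=\real(z)\in\Re$ by definition of $\Re$. Hence $\spec_P(G)\subseteq\Re$.

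For the converse, suppose $\spec_P(G)\subset\Re$. Take any eigenvalue $\mu$ of $P$; then $\mu=\real(z)$ for some root of unity $z$, and in particular $\mu\in[-1,1]$, so $\arccos(\mu)$ is well defined and $\cos(\arccos(\mu))=\mu=\real(z)=\cos(\theta)$ where $z=e^{\iu\theta}$. Since $z$ and $\bar z=e^{-\iu\theta}$ are both roots of unity with the same real part $\mu$, and the two preimages of $\mu$ under $\cos$ in the relevant range are exactly $\pm\theta\Mod{2\pi}$, the numbers $e^{\pm\iu\arccos(\mu)}$ coincide with $\{z,\bar z\}$, hence are themselves roots of unity. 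Applying this to every eigenvalue $\mu_j$ of $P$, and noting that the leftover eigenvalues $\pm 1$ of $U$ supplied by Lemma~\ref{evu} are trivially roots of unity, we conclude that every element of $\spec_U(G)$ is a root of unity. Taking $\tau$ to be a common multiple of their orders (finitely many distinct eigenvalues, via Corollary~\ref{periodic}) gives $U^{\tau}=I$, so $G$ is periodic.

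The one genuine point requiring care is the matching of $e^{\pm\iu\arccos(\mu)}$ with the root of unity $z$ witnessing $\mu\in\Re$, since $\arccos$ returns a value in $[0,\pi]$ and one must verify that this principal branch still lands on a root of unity rather than merely on a complex number with the correct real part. This is resolved by observing that a root of unity and its complex conjugate are both roots of unity, so whichever of $\pm\theta$ lies in $[0,\pi]$ is the argument of either $z$ or $\bar z$, both of which have finite order; thus $e^{\iu\arccos(\mu)}$ is a root of unity regardless of branch choice. Once this is pinned down, the equivalence follows immediately and no further computation is needed.
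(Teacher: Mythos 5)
Your proof is correct and follows exactly the route the paper indicates: it states Lemma~\ref{m1} as a consequence of the spectral mapping theorem (Lemma~\ref{evu}) and the eigenvalue characterization of periodicity (Lemma~\ref{period} and Corollary~\ref{periodic}), which is precisely the chain you spell out. Your careful handling of the branch of $\arccos$ --- noting that $e^{\pm\iu\arccos(\mu)}$ must coincide with $\{z,\bar z\}$ for the witnessing root of unity $z$ --- correctly fills in the one detail the paper leaves implicit.
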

	\begin{proof}
		Let $\mu\in\spec_P(G)$. Then by	Lemma 2.3 and Corollary 2.2.1, $G$ is periodic if and only if $\left(e^{\pm \iu \arccos \mu}\right)^\tau =1$ for some positive integer $\tau$. Therefore, $G$ is periodic if and only if $\mu=\cos \frac{2k\pi}{\tau}$ for some integers $k$ and $\tau(\geq 1)$. Hence, the result follows.
	\end{proof}
	
	\section{Quadratic unitary Cayley graphs}
	This section introduces the definitions of quadratic unitary Cayley graphs and outlines key preliminary concepts and results required for the subsequent sections. 	Let $(\Gamma,+)$ be a finite abelian group and let $S$ be an inverse closed subset of $\Gamma\setminus\{0\}$, where $0$ is the identity element of $\Gamma$.
	\begin{defi}
		A \emph{Cayley graph} $\cay(\Gamma, S)$ of the group $\Gamma$ with respect to $S$ is an undirected graph whose vertex set is $\Gamma$ and two vertices $u$ and $v$ are adjacent if and only if $u-v\in S$. 
	\end{defi}
	
	If $\Gamma=\Zl_{n}$, then the Cayley graph is called a \emph{circulant graph}.  Theoretically, the eigenvalues of the adjacency matrix of $\cay(\Gamma, S)$ can be determined by the characters of the group $\Gamma$, see \cite{rep} for details. A character of an abelian group $\Gamma$ is a homomorphism from $\Gamma$ to the multiplicative group of complex numbers. It is well known that for each $a\in \Zl_n$, the mapping $\chi_a:\Zl_n\rightarrow\Cl^*$ defined by  $$\chi_a(x)=e^{-\frac{2\pi ax\iu}{n}} ~~\text{for}~ x\in \Zl_n,$$ is a character of $\Zl_n$. 
	The eigenvalues $\lambda_a$ of the adjacency matrix (see \cite{rep})  of the circulant graph $\cay(\Zl_n,S)$ are given by  \begin{equation}\label{evc}
		\lambda_a=\chi_a(S)=\sum_{s\in S}\chi_a(s)~~\text{for}~~a\in\Zl_n.
	\end{equation}  
	
	Consider $\Zl_n$ as the ring of integers modulo $n$. Let $\Zl_n^*=\{a\in\Zl_n:\gcd(a,n)=1\}$, the set of units of the ring $\Zl_n$. The unitary Cayley graph \cite{ucg} is a well-known Cayley graph defined as $\cay(\Zl_n,\Zl_n^*)$. Beaudrap \cite{qucfirst} introduced another family of Cayley graphs known as quadratic unitary Cayley graphs. Let $Q_n=\{r^2:r\in \Zl_n^*\}$ and $V_n=Q_n\cup (-Q_n)$ (not necessarily disjoint). The Cayley graph $\cay(\Zl_n, V_n)$ is known as the \emph{quadratic unitary Cayley graph}. We prefer to denote the graph $\cay(\Zl_n,V_n)$ by $\Gn$. Note that if $n\equiv1 \pmod 4$ is a prime number, then $\Gn$ is the well-known Paley graph. See \cite{liu} for more details about quadratic unitary Cayley graphs. Figure~\ref{fig2} visualizes the quadratic unitary Cayley graphs defined over the groups $\Zl_{9}$ and $\Zl_{20}$.
	
\begin{figure}[h!]
	\centering
	\begin{subfigure}{.5\textwidth}
		\centering
		%	\tikzset{every picture/.style={line width=0.75pt}} %set default line width to 0.75pt        
		\begin{tikzpicture}[scale=1.7] % Increase size by 1.5x
			% Define positions of vertices
			\node[circle, draw, fill=black, inner sep=2pt] (0) at ({360/9*0}:2) {};
			\node[circle, draw, fill=black, inner sep=2pt] (1) at ({360/9*1}:2) {};
			\node[circle, draw, fill=black, inner sep=2pt] (2) at ({360/9*2}:2) {};
			\node[circle, draw, fill=black, inner sep=2pt] (3) at ({360/9*3}:2) {};
			\node[circle, draw, fill=black, inner sep=2pt] (4) at ({360/9*4}:2) {};
			\node[circle, draw, fill=black, inner sep=2pt] (5) at ({360/9*5}:2) {};
			\node[circle, draw, fill=black, inner sep=2pt] (6) at ({360/9*6}:2) {};
			\node[circle, draw, fill=black, inner sep=2pt] (7) at ({360/9*7}:2) {};
			\node[circle, draw, fill=black, inner sep=2pt] (8) at ({360/9*8}:2) {};

			% Add labels with individual positioning
			\node[left, xshift=-2pt, yshift=-.5pt] at (5) {$6$};
			\node[above left, xshift=0pt, yshift=-2pt] at (4) {$7$};
			\node[above left] at (3) {$8$};
			\node[above, yshift=2pt] at (2) {$0$}; % Moves label slightly
			\node[above right, xshift=1pt, yshift=-1pt] at (1) {$1$};
			\node[right , xshift=2pt, yshift=0pt] at (0) {$2$};
			%%%%%%%%%%%%%
			\node[below left] at (6) {$5$};
			\node[below , xshift=0pt, yshift=-2pt] at (7) {$4$};
			\node[below right, xshift=0pt] at (8) {$3$};

			\node[below, xshift=0pt, yshift=-25pt] at (7) {$V_{9}=\{1,2,4,5,7,8\}$}; 
			%	\node[above right, xshift=-40pt, yshift=-50pt] at (Nj) {$ \mathbb{Z}_{12}^\times=\{1,3,5,7\}$}; 
			
			% Draw cycle edges
			\draw (0) -- (1);
			\draw (2) -- (1);
			\draw (0) -- (8);
			\draw (2) -- (3);
			\draw (3) -- (4);
			\draw (4) -- (5);
			\draw (5) -- (6);
			\draw (6) -- (7);
			\draw (7) -- (8);
			%%%%%%%%%%%%%%%%%
			\draw (0) -- (2);
			\draw (0) -- (4);
			\draw (0) -- (5);
			\draw (0) -- (7);
			\draw (1) -- (3);
			\draw (1) -- (5);
			\draw (1) -- (6);
			\draw (1) -- (8);
			\draw (2) -- (4);
			\draw (2) -- (6);
			\draw (2) -- (7);
			\draw (3) -- (5);
			\draw (3) -- (7);
			\draw (3) -- (8);
			\draw (4) -- (6);
			\draw (4) -- (8);
			\draw (5) -- (7);
			\draw (6) -- (8);

			%			%%%%%%%%%%%
			
		\end{tikzpicture}
		\caption{$\mathcal{G}_{\mathbb{Z}_{9}}$}\label{fig:sub1}
	\end{subfigure}%
	\begin{subfigure}{.5\textwidth}
		\centering

		%	\tikzset{every picture/.style={line width=0.75pt}} %set default line width to 0.75pt        
		
		\begin{tikzpicture}[scale=1.7] % Increase size by 1.5x
			% Define positions of vertices
			\node[circle, draw, fill=black, inner sep=2pt] (0) at ({360/20*0}:2) {};
			\node[circle, draw, fill=black, inner sep=2pt] (1) at ({360/20*1}:2) {};
			\node[circle, draw, fill=black, inner sep=2pt] (2) at ({360/20*2}:2) {};
			\node[circle, draw, fill=black, inner sep=2pt] (3) at ({360/20*3}:2) {};
			\node[circle, draw, fill=black, inner sep=2pt] (4) at ({360/20*4}:2) {};
			\node[circle, draw, fill=black, inner sep=2pt] (5) at ({360/20*5}:2) {};
			\node[circle, draw, fill=black, inner sep=2pt] (6) at ({360/20*6}:2) {};
			\node[circle, draw, fill=black, inner sep=2pt] (7) at ({360/20*7}:2) {};
			\node[circle, draw, fill=black, inner sep=2pt] (8) at ({360/20*8}:2) {};
			\node[circle, draw, fill=black, inner sep=2pt] (9) at ({360/20*9}:2) {};
			\node[circle, draw, fill=black, inner sep=2pt] (10) at ({360/20*10}:2) {};
			\node[circle, draw, fill=black, inner sep=2pt] (11) at ({360/20*11}:2) {};
			\node[circle, draw, fill=black, inner sep=2pt] (12) at ({360/20*12}:2) {};
			\node[circle, draw, fill=black, inner sep=2pt] (13) at ({360/20*13}:2) {};
			\node[circle, draw, fill=black, inner sep=2pt] (14) at ({360/20*14}:2) {};
			\node[circle, draw, fill=black, inner sep=2pt] (15) at ({360/20*15}:2) {};
			\node[circle, draw, fill=black, inner sep=2pt] (16) at ({360/20*16}:2) {};
			\node[circle, draw, fill=black, inner sep=2pt] (17) at ({360/20*17}:2) {};
			\node[circle, draw, fill=black, inner sep=2pt] (18) at ({360/20*18}:2) {};
			\node[circle, draw, fill=black, inner sep=2pt] (19) at ({360/20*19}:2) {};

			% Add labels with individual positioning
			\node[above, xshift=0pt, yshift=2pt] at (5) {$0$};
			\node[above right, xshift=-2pt, yshift=1pt] at (4) {$1$};
			\node[above right] at (3) {$2$};
			\node[above right] at (2) {$3$}; % Moves label slightly
			\node[above right, xshift=1pt, yshift=-1pt] at (1) {$4$};
			\node[right , xshift=2pt, yshift=0pt] at (0) {$5$};
			%%%%%%%%%%%%%
			\node[above left] at (6) {$19$};
			\node[above left] at (7) {$18$};
			\node[above left] at (8) {$17$};
			\node[left, xshift=-1pt, yshift=0pt] at (9) {$16$}; 
			\node[below left, xshift=-2pt, yshift=1pt] at (10) {$15$};
			\node[below left] at (11) {$14$};
			\node[below left] at (12) {$13$};
			\node[below left] at (13) {$12$};
			\node[below left, xshift=1pt, yshift=-1pt] at (14) {$11$};
			\node[below , xshift=0pt, yshift=-2pt] at (15) {$10$};
			\node[below right] at (16) {$9$};
			\node[below right] at (17) {$8$};
			\node[below right] at (18) {$7$};
			\node[below right] at (19) {$6$};

			\node[below, xshift=0pt, yshift=-25pt] at (15) {$V_{20}=\{1,9,11,19\}$}; 
			
			% Draw cycle edges
			\draw (0) -- (1);
			\draw (2) -- (1);
			\draw (0) -- (19);
			\draw (2) -- (3);
			\draw (3) -- (4);
			\draw (4) -- (5);
			\draw (5) -- (6);
			\draw (6) -- (7);
			\draw (7) -- (8);
			\draw (8) -- (9);
			\draw (9) -- (10);
			\draw (10) -- (11);
			\draw (11) -- (12);
			\draw (12) -- (13);
			\draw (13) -- (14);
			\draw (14) -- (15);
			\draw (15) -- (16);
			\draw (16) -- (17);
			\draw (17) -- (18);
			\draw (18) -- (19);   
			%			%%%%%%%%%%%
			\draw (5) -- (14);
			\draw (5) -- (16);
			\draw (4) -- (13);
			\draw (4) -- (15);
			\draw (2) -- (11);
			\draw (2) -- (13);
			\draw (1) -- (10);
			\draw (1) -- (12);
			\draw (19) -- (8);
			\draw (19) -- (10);
			\draw (18) -- (7);
			\draw (18) -- (9);
			\draw (6) -- (15);
			\draw (7) -- (16);
			\draw (3) -- (14);
			\draw (3) -- (12);
			\draw (0) -- (11);
			\draw (0) -- (9);
			\draw (17) -- (6);
			\draw (17) -- (8);
		\end{tikzpicture}
		\caption{$\mathcal{G}_{\mathbb{Z}_{20}}$}
		\label{fig:sub2}
	\end{subfigure}
	\caption{Examples of quadratic unitary Cayley graphs }
	\label{fig2}
\end{figure}

	From \eqref{evc}, the eigenvalues of the adjacency matrix of $\Gn$ can be expressed as  \begin{equation}\label{eev}
		\lambda_a=\sum_{r\in V_n} e^{-\frac{2\pi ar\iu}{n}}~~\text{for}~~a\in \Zl_n.
	\end{equation}
	Huang \cite{quc} gave an explicit form of the adjacency eigenvalues of the quadratic unitary Cayley graphs. The expressions of the eigenvalues of $\Gn$ given in Haung \cite{quc} are presented in Theorem \ref{quc2}.  The next two results describe some fundamental properties of $Q_n$.
	\begin{lema}\emph{\cite{zucker}}\label{lp}
		Let $t$ be a positive integer and $p$ be a prime number.
		\begin{enumerate}[label=(\roman*)]
			\item If $p=2$, then $-1\notin Q_{2^t}$ for $t\geq 2$.
			\item If $p\equiv 1\pmod 4$, then $-1\in Q_{p^t}$. 
			\item If $p\equiv 3\pmod 4$, then $-1\notin Q_{p^t}$.
		\end{enumerate}
	\end{lema}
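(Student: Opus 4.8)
The plan is to reduce each statement to a question about which elements of the unit group $\Zl_{p^t}^*$ are squares, and then to exploit the well-known structure of this group. Recall that by definition $-1\in Q_{p^t}$ precisely when $-1$ is a square in $\Zl_{p^t}^*$, so in every case the task is to decide solvability of $x^2\equiv -1\pmod{p^t}$.

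For parts (ii) and (iii), where $p$ is odd, I would use that $\Zl_{p^t}^*$ is cyclic of order $\phi(p^t)=p^{t-1}(p-1)$. Writing $\Zl_{p^t}^*=\langle g\rangle$ and $m=p^{t-1}(p-1)$, note that $m$ is even (as $p-1$ is even) and that $-1$ is the unique element of order $2$, whence $-1=g^{m/2}$. In a cyclic group of even order $m$ the squaring map $g^k\mapsto g^{2k}$ has image exactly the even powers of $g$, so $g^{m/2}$ is a square if and only if $m/2$ is even, i.e. $4\divides m$. Since $p$ is odd, $p^{t-1}$ is odd, so $4\divides p^{t-1}(p-1)$ if and only if $4\divides(p-1)$, i.e. $p\equiv 1\pmod 4$. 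This simultaneously yields (ii) when $p\equiv 1\pmod 4$ and (iii) when $p\equiv 3\pmod 4$ (since then $p-1\equiv 2\pmod 4$).

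For part (i) with $p=2$ the group $\Zl_{2^t}^*$ fails to be cyclic once $t\geq 3$, so the ``order divisible by $4$'' criterion no longer applies; this is the main point requiring care. Instead I would invoke the elementary fact that every odd integer $r$ satisfies $r^2\equiv 1\pmod 8$. When $t\geq 3$ we have $8\divides 2^t$, so every element of $Q_{2^t}$ is congruent to $1$ modulo $8$, whereas $-1\equiv 7\pmod 8$; hence $-1\notin Q_{2^t}$. The remaining case $t=2$ is settled by direct computation: $\Zl_4^*=\{1,3\}$ gives $Q_4=\{1\}$, and $-1=3\notin Q_4$.

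The only genuinely delicate step is the $p=2$ case, since the non-cyclicity of $\Zl_{2^t}^*$ rules out the clean cyclic-group argument used for odd $p$; the congruence $r^2\equiv 1\pmod 8$ circumvents this and uniformly disposes of all $t\geq 3$, leaving only the trivial verification at $t=2$.
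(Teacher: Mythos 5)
The paper offers no proof of this lemma at all; it is simply cited from Niven--Zuckerman--Montgomery, so your argument can only be judged on its own merits, and it holds up. For odd $p$ your reduction is clean and correct: $\Zl_{p^t}^*$ is cyclic of even order $m=p^{t-1}(p-1)$, the element $-1$ is the unique element of order $2$ and hence equals $g^{m/2}$, the squares are exactly the even powers of $g$ (well defined since $m$ is even), and $4\divides m$ if and only if $4\divides (p-1)$ because $p^{t-1}$ is odd; this settles (ii) and (iii) in one stroke. For $p=2$ you correctly flag that the cyclic-group criterion is unavailable for $t\geq 3$ and substitute the congruence $r^2\equiv 1\pmod 8$ for odd $r$, which shows $Q_{2^t}\subseteq 1+8\Zl_{2^t}$ while $-1\equiv 7\pmod 8$, with $t=2$ checked by hand. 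The more traditional textbook route for (ii) and (iii) is Euler's criterion modulo $p$ followed by a Hensel lift to $p^t$; your approach avoids the lifting step by working directly inside $\Zl_{p^t}^*$, at the cost of invoking the (standard but nontrivial) fact that this group is cyclic for odd $p$. Either way, the proof is complete and correct.
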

	The following lemma helps in computing $|V_n|$, the regularity of $\Gn$.
	\begin{lema}\label{kkk}
		Let $t$ be a positive integer and $p$ be a prime number. Then,
		 \begin{enumerate}[label=(\roman*)]
			\item     $|Q_{2^t}|=\left\{ \begin{array}{cc}
				1 &\mbox{ if } t\leq 2 \\
				2^{t-3} &\mbox{ if } t\geq 3.
			\end{array} \right.$         
			\item $|Q_{p^t}|=\frac{p^{t-1}(p-1)}{2}$ for $p\geq3$. 
		\end{enumerate}
	\end{lema}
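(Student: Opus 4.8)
The plan is to realize $Q_n$ as the image of the squaring endomorphism of the abelian group $\Zl_n^*$ and then apply the first isomorphism theorem. Concretely, the map $\sigma:\Zl_n^*\to\Zl_n^*$ defined by $\sigma(r)=r^2$ is a group homomorphism because $\Zl_n^*$ is abelian; its image is exactly $Q_n$, and its kernel is the $2$-torsion subgroup $T_n:=\{x\in\Zl_n^*:x^2=1\}$, the set of square roots of unity. Hence
\begin{equation*}
|Q_n|=\frac{|\Zl_n^*|}{|T_n|}.
\end{equation*}
Since $|\Zl_{p^t}^*|=p^{t-1}(p-1)$ is classical, both parts reduce to computing $|T_{p^t}|$, and the only real work is carrying out this count from the known structure of $\Zl_{p^t}^*$.

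For an odd prime $p$ I would invoke the classical fact that $\Zl_{p^t}^*$ is cyclic. A finite cyclic group of even order contains exactly two elements of order dividing $2$, namely $\pm1$; as $p-1$ is even, the order $p^{t-1}(p-1)$ is even, so $|T_{p^t}|=2$. This immediately yields $|Q_{p^t}|=\frac{p^{t-1}(p-1)}{2}$, which is part (ii).

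The case $p=2$ is where the count changes, and it is the only genuinely delicate point. For $t\le 2$ one checks directly that $\Zl_2^*=\{1\}$ and $\Zl_4^*=\{1,3\}$ with $3^2\equiv1\pmod 4$, so $Q_2=Q_4=\{1\}$ and $|Q_{2^t}|=1$. For $t\ge 3$ the group $\Zl_{2^t}^*$ is no longer cyclic: it is isomorphic to $\Zl_2\times\Zl_{2^{t-2}}$, of order $2^{t-1}$. Each direct factor contributes two elements of order dividing $2$ once $t\ge 3$, so $|T_{2^t}|=4$; equivalently, the four square roots of unity modulo $2^t$ are $\pm1$ and $2^{t-1}\pm1$ (a direct squaring confirms each satisfies $x^2\equiv1\pmod{2^t}$ since $2t-2\ge t$). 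Therefore $|Q_{2^t}|=\frac{2^{t-1}}{4}=2^{t-3}$, completing part (i). The main obstacle, such as it is, lies precisely in remembering that the non-cyclic structure of $\Zl_{2^t}^*$ for $t\ge 3$ doubles the number of square roots of unity, and hence halves $|Q_{2^t}|$ relative to the naive odd-prime analogue.
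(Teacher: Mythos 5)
Your proposal is correct and follows essentially the same route as the paper: both identify $Q_{p^t}$ as the image of the squaring homomorphism on $\Zl_{p^t}^*$ and count the kernel ($\{\pm1\}$ for odd $p$, and $\{\pm1,\,2^{t-1}\pm1\}$ for $p=2$, $t\ge3$) before applying the first isomorphism theorem. You merely supply a bit more justification for the kernel sizes via the structure of $\Zl_{p^t}^*$, which the paper leaves as "it is clear."
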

	\begin{proof}
		\begin{enumerate}[label=(\roman*)]
			\item It is clear that $|Q_2|=1=|Q_4|$. Let $t\geq 3$. Now, consider the group homomorphism $\psi:\Zl_{2^t}^*\rightarrow Q_{2^t}$ defined by $\psi(x)=x^2$ for $x\in \Zl_{2^t}^* $. It is clear that $\ker(\psi)=\left\{1, -1, 1+2^{t-1}, -1+2^{t-1}\right\}$ and $\psi$ is surjective. Therefore, by the first homomorphism theorem, the result follows.
			\item For $p\geq3$, consider the surjective group homomorphism $\psi:\Zl_{p^t}^*\rightarrow Q_{p^t}$ defined by  $\psi(x)=x^2$ for $x\in \Zl_{p^t}^* $. We find $\ker(\psi)=\{1, -1\}$. Hence, the result follows. \qedhere
		\end{enumerate}
	\end{proof}
	Let $p$ be an odd prime. An integer $a$ is a quadratic residue modulo $p$  if it is congruent to a perfect square modulo $p$; otherwise, $a$ is a quadratic non-residue modulo $p$. The \emph{Legendre symbol} is a function of $a$  and $p$, denoted $\left(a\Big/p\right)$, where
	$$\left(a\Big/p\right)=\left\{ \begin{array}{rl}
		1 &\mbox{ if } \text{$a$ is a quadratic residue modulo $p$ and $a\not\equiv 0 \pmod p$}, \\ -1 & \mbox{ if } \text{$a$ is a quadratic non-residue modulo $p$,}\\ 0 & \mbox{ if } a\equiv 0\pmod p.
	\end{array}\right.$$
	
	See \cite{zucker} for more details about the Legendre symbol. Some basic properties of the Legendre symbol are given in the following theorem.
	\begin{theorem}\emph{\cite{zucker}}
		Let $p$ be an odd prime, and let $a$ and $b$ be integers relatively prime to $p$. Then, the following properties are satisfied by the Legendre symbol.
		\begin{enumerate}[label=(\roman*)]
			\item If $a\equiv b \pmod p$, then $\left(a\Big/p\right)=\left(b\Big/p\right)$.
			\item $\left(a^2\Big/p\right)=1$.
			\item $\left(ab\Big/p\right)=\left(a\Big/p\right)\left(b\Big/p\right)$. 
			\item $\left(1\Big/p\right)=1$ and $\left(-1\Big/p\right)=(-1)^{\frac{p-1}{2}}$. 
		\end{enumerate}
	\end{theorem}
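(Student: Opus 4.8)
The plan is to derive all four statements from a single workhorse, \emph{Euler's criterion}, which asserts that $a^{(p-1)/2}\equiv\left(a\Big/p\right)\pmod p$ for every integer $a$ coprime to $p$. First I would establish this criterion. By Fermat's little theorem $a^{p-1}\equiv 1\pmod p$, so $a^{(p-1)/2}$ is a root of $x^2-1$ over the field $\Zl_p$ and hence is congruent to $\pm 1$. To pin down the sign I would fix a primitive root $g$ of $\Zl_p^*$ and write $a\equiv g^k$; then $a$ is a quadratic residue exactly when $k$ is even, while $g^{(p-1)/2}$ has order $2$ and is therefore congruent to $-1$. Substituting gives $a^{(p-1)/2}\equiv g^{k(p-1)/2}\equiv(-1)^k$, which is $1$ precisely when $a$ is a quadratic residue and $-1$ otherwise, matching the Legendre symbol. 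An alternative route avoiding primitive roots counts the $(p-1)/2$ quadratic residues and observes that they are exactly the roots of the degree-$(p-1)/2$ polynomial $x^{(p-1)/2}-1$, which over the field $\Zl_p$ has at most $(p-1)/2$ roots.

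With Euler's criterion in hand, properties (i) and (ii) follow immediately from the definition: whether $a$ is a quadratic residue depends only on its residue class modulo $p$, giving (i), and $a^2$ is visibly a nonzero square whenever $\gcd(a,p)=1$, giving (ii). For the multiplicativity in (iii) I would apply the criterion termwise, $\left(ab\Big/p\right)\equiv(ab)^{(p-1)/2}=a^{(p-1)/2}b^{(p-1)/2}\equiv\left(a\Big/p\right)\left(b\Big/p\right)\pmod p$. Since both sides lie in $\{-1,1\}$ and $p$ is an odd prime, a congruence modulo $p$ between two such values forces genuine equality, yielding (iii).

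Finally, (iv) splits into two parts. The equality $\left(1\Big/p\right)=1$ is immediate since $1=1^2$ is a quadratic residue. For $\left(-1\Big/p\right)$ I would specialize Euler's criterion to $a=-1$, obtaining $\left(-1\Big/p\right)\equiv(-1)^{(p-1)/2}\pmod p$; as before both sides are $\pm 1$ and $p>2$, so the congruence upgrades to the genuine equality $\left(-1\Big/p\right)=(-1)^{(p-1)/2}$. The only substantive step in the whole program is Euler's criterion itself; once it is in place, every property reduces to reading off signs together with the elementary observation that two elements of $\{-1,1\}$ that are congruent modulo an odd prime must coincide.
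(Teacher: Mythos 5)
Your proof is correct. The paper does not prove this theorem at all --- it is quoted from the cited reference (Niven, Zuckerman, and Montgomery) as a standard fact --- and your route through Euler's criterion, with the final observation that a congruence between two elements of $\{-1,1\}$ modulo an odd prime forces equality, is exactly the classical argument given in that reference.
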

	For each positive integer $n$, the \emph{quadratic Gaussian sum} of order $n$ is the complex-valued function $G_n:\Zl \rightarrow \Cl$ defined by the equation 
	\begin{equation}\label{defgauss}
		G_n(a)=\frac{1}{\sqrt{n}}\sum_{k\in \Zl_n} e^{-\frac{2\pi k^2a\iu}{n}}.
	\end{equation}
	Quadratic Gaussian sum is introduced in \cite{gauss}. It is proved in \cite{gauss} that some quadratic Gaussian sums are the eigenvalues of the Fourier transform.
	\begin{lema}\emph{\cite{gauss}}\label{kk}
		Let $p$ be a prime number. Then, $$G_p(1)=\left\{ \begin{array}{rr}
			1 &\mbox{ if } p\equiv 1 \pmod 4 \\
			-\iu &\mbox{ if } p\equiv 3 \pmod 4.
		\end{array} \right.$$
	\end{lema}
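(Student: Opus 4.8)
The plan is to reduce $G_p(1)$ to the classical Gauss sum attached to the quadratic (Legendre) character, compute its modulus by an elementary double-sum, use a conjugation symmetry to force its value onto the real or the imaginary axis according to $p\bmod 4$, and only then invoke the classical determination of the sign of the Gauss sum. Throughout I write $\omega=e^{-2\pi\iu/p}$, so that by \eqref{defgauss} we have $\sqrt{p}\,G_p(1)=\sum_{k\in\Zl_p}\omega^{k^2}$. First I would convert the sum over the squares into a sum weighted by Legendre symbols: since the number of $k\in\Zl_p$ with $k^2\equiv t$ equals $1+\left(t\Big/p\right)$ for $t\neq 0$ and equals $1$ for $t=0$, and since $\sum_{t\in\Zl_p}\omega^t=0$, a short rearrangement collapses the constant and the $\sum_t\omega^t$ contributions and yields $\sqrt{p}\,G_p(1)=\sum_{t\in\Zl_p}\left(t\Big/p\right)\omega^t=:g$, the quadratic-character Gauss sum.

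Next I would settle the modulus. Expanding $|g|^2=\sum_{s,t}\left(s\Big/p\right)\left(t\Big/p\right)\omega^{\,t-s}$ and substituting $t\equiv su\pmod{p}$ for each $s\neq 0$ turns the inner sum into a sum of the nontrivial character $\left(u\Big/p\right)$ against roots of unity, which collapses the whole expression to $p$; hence $|g|=\sqrt{p}$ and $|G_p(1)|=1$. For the argument of $g$, note that $\overline{g}=\sum_t\left(t\Big/p\right)\omega^{-t}$, and the substitution $t\mapsto -t$ together with the property $\left(-1\Big/p\right)=(-1)^{(p-1)/2}$ gives $\overline{g}=\left(-1\Big/p\right)g=(-1)^{(p-1)/2}g$. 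Thus $g$ is real when $p\equiv 1\pmod 4$ and purely imaginary when $p\equiv 3\pmod 4$; combined with $|g|=\sqrt{p}$ this already pins $G_p(1)$ down to $\pm1$ in the first case and to $\pm\iu$ in the second, which is exactly the source of the $p\bmod 4$ dichotomy in the statement.

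The remaining, and genuinely hard, step is to fix the ambiguous sign, i.e.\ to show $g=+\sqrt{p}$ when $p\equiv 1$ and $g=-\iu\sqrt{p}$ when $p\equiv 3$ (the minus sign originating from the conjugate convention $\omega=e^{-2\pi\iu/p}$, which conjugates the textbook sum). This is Gauss's classical ``sign of the Gauss sum,'' and it resists the elementary manipulations above. I would obtain it by Schur's matrix method: form $M=\left(\omega^{jk}\right)_{0\le j,k\le p-1}$, so that $\operatorname{tr}M=\sum_{j}\omega^{j^2}=\sqrt{p}\,G_p(1)$, and observe that $M$ is $\sqrt{p}$ times the unitary discrete Fourier matrix, whence $M^4=p^2 I$ and every eigenvalue of $M$ lies in $\{\pm\sqrt{p},\ \pm\iu\sqrt{p}\}$. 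Writing the multiplicities of these four eigenvalues, the relations $\operatorname{tr}M^2=1$ (the trace of the involution $j\mapsto -j$ on $\Zl_p$) and $\sum(\text{mult})=p$ determine two of the needed combinations, and an independent evaluation of $\det M$ supplies the last, after which $\operatorname{tr}M$ and hence $G_p(1)$ are forced.

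The main obstacle is precisely this final sign determination, concentrated in the evaluation of $\det M$; everything before it is routine character arithmetic. An alternative route to the same sign is analytic, extracting it from the transformation law of the Jacobi theta function via Poisson summation; either way, matching the resulting value of $g/\sqrt{p}$ against the two residue classes gives $G_p(1)=1$ for $p\equiv 1\pmod 4$ and $G_p(1)=-\iu$ for $p\equiv 3\pmod 4$, completing the proof.
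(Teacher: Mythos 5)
The paper offers no proof of this lemma at all---it is imported verbatim from \cite{gauss}---so there is no internal argument to compare yours against; the only question is whether your blind proposal would actually establish the stated values. Your outline is the standard one and is essentially sound as far as it goes: the reduction of $\sqrt{p}\,G_p(1)=\sum_{k\in\Zl_p}\omega^{k^2}$ (with $\omega=e^{-2\pi\iu/p}$) to the Legendre-character sum $g=\sum_{t}\left(t\Big/p\right)\omega^t$ via the solution count $1+\left(t\Big/p\right)$, the computation $|g|^2=p$ by the substitution $t\equiv su$, and the symmetry $\overline{g}=\left(-1\Big/p\right)g$ forcing $g$ onto the real or imaginary axis according to $p\bmod 4$ are all correct, as is your remark that the convention $\omega=e^{-2\pi\iu/p}$ conjugates the textbook Gauss sum and is what produces $-\iu$ rather than $+\iu$ when $p\equiv 3\pmod 4$. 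Two caveats. First, the step that carries the entire content of the lemma---the determination of the sign---is named but not executed: in the Schur-matrix route you must actually evaluate $\det M$ (e.g.\ as the Vandermonde product $\prod_{j<k}(\omega^k-\omega^j)$, whose argument has to be tracked carefully), and until that computation is carried out the proof is incomplete rather than merely routine. Second, a normalization slip: for $M=(\omega^{jk})_{0\le j,k\le p-1}$ one has $M^2=pJ$ with $J$ the permutation matrix of $j\mapsto -j$, so $\operatorname{tr}M^2=p$, not $1$; it is the unitary matrix $M/\sqrt{p}$ whose square has trace $1$. With the corrected relations $a+b+c+d=p$ and $a+b-c-d=1$ for the multiplicities of the eigenvalues $\pm\sqrt{p},\pm\iu\sqrt{p}$, together with the determinant, the trace $\operatorname{tr}M=\sqrt{p}\,G_p(1)$ is indeed forced to the stated values, so the architecture of your argument is right even though its hardest component is left as a pointer to the classical literature---which is, in effect, exactly what the paper itself does by citing \cite{gauss}.
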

	
	Huang \cite{quc} used some properties of quadratic Gaussian sums to determine the value of $\chi_a(Q_{p^t})$ in the following theorem.
	\begin{theorem}\emph{\cite{quc}}\label{quc1}
		Let  $p$ be a prime number. Suppose $t$ and $a$ be positive integers such that $a\in \Zl_{p^t}$.
		\begin{enumerate}[label=(\roman*)]
			\item If $p=2$, then $$\chi_a(Q_2)=\sum_{r\in Q_2} \chi_a(r)=\cos(a\pi),~~~\chi_a(Q_4)=\sum_{r\in Q_4} \chi_a(r)=e^{-\frac{a\pi \iu}{2}}$$ and if $t\geq 3$, then  $$\chi_a(Q_{2^t})=\sum_{j\in Q_{2^t}} \chi_a(j)=\left\{ \begin{array}{ll}
				2^{t-3}e^{-\frac{a\pi \iu}{2^{t-1}}} &\mbox{ if } a\in\left\{2^{t-3}, 3\cdot 2^{t-3}, 5\cdot2^{t-3}, 7\cdot 2^{t-3} \right\}, \\ 2^{t-3}(-\iu)^{\frac{a}{2^{t-2}}} & \mbox{ if } a\in \left\{0, 2^{t-2}, 2^{t-1}, 3\cdot2^{t-2} \right\}, \\
				0 & \text{ otherwise.}		
			\end{array}\right. $$
			\item If $p\geq 3$ and $t\geq1$, then $$\chi_a(Q_{p^t})=\sum_{j\in Q_{p^t}}\chi_a(j)= \left\{ \begin{array}{ll}
				\frac{p^{t-1}(p-1)}{2}&\mbox{ if } a=0, \\ \frac{1}{2}p^{t-1}\left[ \sqrt{p}\left( \frac{a}{p^{t-1}}\Big/ p\right) G_p(1)-1\right] & \mbox{ if } a\in p^{t-1} \Zl_p\setminus \{0\}, \\
				0 & \text{ otherwise.}
			\end{array}\right. $$
		\end{enumerate}
		
	\end{theorem}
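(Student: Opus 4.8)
The plan is to reduce the character sum $\chi_a(Q_{p^t})=\sum_{j\in Q_{p^t}}e^{-\frac{2\pi a j\iu}{p^t}}$ to a (units-only) quadratic Gaussian sum and then apply the standard reduction formulas for Gaussian sums over prime powers; note that since $G_n$ is defined with the same sign convention $e^{-2\pi\iu(\,\cdot\,)/n}$ as $\chi_a$, no conjugation bookkeeping is needed. For odd $p$ I would start from the homomorphism $\psi\colon\Zl_{p^t}^*\to Q_{p^t},\ \psi(x)=x^2$ used in the proof of Lemma \ref{kkk}, whose kernel is $\{1,-1\}$; this gives $\sum_{r\in\Zl_{p^t}^*}e^{-\frac{2\pi a r^2\iu}{p^t}}=2\chi_a(Q_{p^t})$, so it suffices to evaluate the units-only sum $U_t(a):=\sum_{r\in\Zl_{p^t}^*}e^{-\frac{2\pi a r^2\iu}{p^t}}$. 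The case $a=0$ is immediate: every term is $1$, so $\chi_0(Q_{p^t})=|Q_{p^t}|=\frac{p^{t-1}(p-1)}{2}$ by Lemma \ref{kkk}(ii).

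The heart of part (ii) is a vanishing statement. Writing the full Gaussian sum $\Sigma_t(a):=\sum_{k\in\Zl_{p^t}}e^{-\frac{2\pi a k^2\iu}{p^t}}$ and splitting $k=u+p^{t-1}v$ with $u$ modulo $p^{t-1}$ and $v$ modulo $p$, one has $k^2\equiv u^2+2up^{t-1}v\pmod{p^t}$ for $t\geq2$, so $\Sigma_t(a)=\sum_{u}e^{-\frac{2\pi a u^2\iu}{p^t}}\sum_{v}e^{-\frac{2\pi\cdot 2auv\,\iu}{p}}$. When $\gcd(a,p)=1$ the inner sum over $v$ is $p$ if $p\mid u$ and $0$ otherwise; since the surviving $u$ are exactly multiples of $p$, the unit terms $p\nmid k$ drop out, giving $U_t(a)=0$ for $\gcd(a,p)=1,\ t\geq2$, and simultaneously the reduction $\Sigma_t(a)=p\,\Sigma_{t-2}(a)$. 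For a general $a=p^s u$ with $\gcd(u,p)=1$, collapsing the units modulo $p^s$ (the units of $\Zl_{p^t}$ surject onto those of $\Zl_{p^{t-s}}$ with fibers of size $p^s$) yields $U_t(p^s u)=p^s\,U_{t-s}(u)$. Combined with the vanishing, $U_t(a)=0$ unless $s\geq t-1$, which is precisely the claim that $\chi_a(Q_{p^t})=0$ for $a\notin p^{t-1}\Zl_p$.

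For the surviving case $a=p^{t-1}u$ with $\gcd(u,p)=1$, the reduction leaves $U_t(a)=p^{t-1}U_1(u)$ where $U_1(u)=\sum_{k=1}^{p-1}e^{-\frac{2\pi u k^2\iu}{p}}=\sqrt{p}\,G_p(u)-1$ (subtracting the $k=0$ term from the full sum $\sqrt{p}\,G_p(u)$). I would then use the multiplicativity $G_p(u)=\left(u\Big/p\right)G_p(1)$ — obtained from the substitution $k\mapsto ck$ and the multiplicative property of the Legendre symbol — together with the value of $G_p(1)$ from Lemma \ref{kk}, and divide by $2$, to arrive at $\chi_a(Q_{p^t})=\frac12 p^{t-1}\!\left[\sqrt{p}\left(\frac{a}{p^{t-1}}\Big/p\right)G_p(1)-1\right]$, as stated.

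The case $p=2$ is handled separately and more directly. For $t\leq2$ one checks $Q_2=Q_4=\{1\}$, so $\chi_a(Q_2)=e^{-\pi a\iu}=\cos(a\pi)$ and $\chi_a(Q_4)=e^{-\frac{a\pi\iu}{2}}$. For $t\geq3$ the key observation is $Q_{2^t}=\{j:j\equiv1\pmod 8\}$: every odd square is $\equiv1\pmod8$, and there are exactly $2^{t-3}$ residues $\equiv1\pmod8$ in $\Zl_{2^t}$, matching $|Q_{2^t}|$ from Lemma \ref{kkk}(i), so the two sets coincide. Parametrising $j=1+8m$ turns the sum into $e^{-\frac{2\pi a\iu}{2^t}}\sum_{m=0}^{2^{t-3}-1}e^{-\frac{2\pi a m\iu}{2^{t-3}}}$, a geometric sum equal to $2^{t-3}$ when $2^{t-3}\mid a$ and $0$ otherwise; rewriting $e^{-\frac{2\pi a\iu}{2^t}}=e^{-\frac{a\pi\iu}{2^{t-1}}}$ and, for even multiples $a=\frac{a}{2^{t-2}}\cdot2^{t-2}$, recognising $e^{-\frac{a\pi\iu}{2^{t-1}}}=(-\iu)^{a/2^{t-2}}$ produces the two listed subcases. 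I expect the main obstacle to be the odd-prime argument: establishing the reduction identity and the exact vanishing threshold for $U_t(a)$, and correctly pinning down the Legendre symbol and the precise value of $G_p(1)$ in the single surviving term — the $p=2$ case, by contrast, is essentially a direct geometric-series computation.
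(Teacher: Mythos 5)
Your proof is correct, but there is nothing in the paper to compare it against: Theorem \ref{quc1} is imported verbatim from Huang \cite{quc} and the paper gives no proof of it. Judged on its own, your argument is a complete and standard derivation. The odd-prime part is sound: the squaring homomorphism with kernel $\{1,-1\}$ correctly converts $\chi_a(Q_{p^t})$ into half the units-only sum $U_t(a)$; the decomposition $k=u+p^{t-1}v$ (valid since $2(t-1)\ge t$ for $t\ge2$) gives the vanishing of $U_t(a)$ for $p\nmid a$, the collapse $U_t(p^su)=p^sU_{t-s}(u)$ pins the nonzero cases to $a\in p^{t-1}\Zl_p$, and the terminal evaluation $U_1(u)=\sqrt{p}\,G_p(u)-1$ with $G_p(u)=\left(u\big/p\right)G_p(1)$ reproduces the stated formula; you are also right that the paper's sign convention $e^{-2\pi\iu(\cdot)/n}$ in both $\chi_a$ and $G_n$ means no conjugation is needed. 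The $p=2$ part correctly identifies $Q_{2^t}$ with the residues $\equiv1\pmod 8$ by the cardinality count against Lemma \ref{kkk}(i), and the geometric-series evaluation yields exactly the three listed subcases. The only spot worth tightening in a written version is the multiplicativity $G_p(u)=\left(u\big/p\right)G_p(1)$: the substitution $k\mapsto ck$ only shows $G_p$ is constant on square classes, so to get the sign on the non-residue class you should either use the counting identity $\sqrt{p}\,G_p(u)=\sum_{j}\left(j\big/p\right)e^{-2\pi ju\iu/p}$ or the averaging argument $\sum_{u\in\Zl_p^*}G_p(u)=0$; this is standard and does not affect correctness.
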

	The following theorem gives the eigenvalues $\ld_a$ given in Equation \eqref{eev} of the adjacency matrix of the quadratic unitary Cayley graph.
	\begin{theorem}\emph{\cite{quc}}\label{quc2}
		Let $p_1^{t_1}p_2^{t_2}\hdots p_s^{t_s}$ be the prime factorization of a positive integer $n$ such that $-1\in Q_{p_k^{t_k}}$ for $k\in\{1,\hdots, r\}$ and $-1\notin Q_{p_k^{t_k}}$ for $k\in\{r+1, \hdots, s\},$ where $0\leq r\leq s$. Let $a\in\Zl_n$ be identified with $a=(a_1,\hdots,a_s)\in\Zl_{p_1^{t_1}}\times\cdots\times\Zl_{p_s^{t_s}}$.
		\begin{enumerate}[label=(\roman*)]
			\item If $r=s$, then the eigenvalues of the adjacency matrix of $\Gn$ are given by $$\ld_a=\chi_{a_1}\left(Q_{p_1^{t_1}}\right)\hdots\chi_{a_s}\left(Q_{p_s^{t_s}}\right)
			~~\text{for}~a\in\Zl_n.$$
			\item If $r<s$, then the eigenvalues of the adjacency matrix of $\Gn$ are given by $$\ld_a=\chi_{a_1}\left(Q_{p_1^{t_1}}\right)\hdots \chi_{a_r} \left(Q_{p_r^{t_r}}\right)  \left[ \chi_{a_{r+1}}\left(Q_{p_{r+1}^{t_{r+1}}}\right)\hdots \chi_{a_s} \left(Q_{p_s^{t_s}}\right)+ \compconj{\chi_{a_{r+1}}\left(Q_{p_{r+1}^{t_{r+1}}}\right) } \hdots \overline{\chi_{a_s} \left(Q_{p_s^{t_s}}\right)} \right] $$ $\text{for}~a\in\Zl_n$.
		\end{enumerate}
		
	\end{theorem}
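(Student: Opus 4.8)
The plan is to start from the spectral formula \eqref{eev}, which expresses each eigenvalue as the character sum $\ld_a=\chi_a(V_n)$ with $V_n=Q_n\cup(-Q_n)$, and to reduce everything to the prime-power computation already recorded in Theorem \ref{quc1}. The two structural ingredients I would establish first are the behaviour of $V_n$ under negation and the multiplicativity of the character sum across the prime-power factors of $n$.

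For the negation step, I would note that $Q_n$ is a subgroup of the unit group $\Zl_n^*$ (it is the image of the squaring homomorphism, as in the proof of Lemma \ref{kkk}) and that $-Q_n=(-1)Q_n$ is the coset of $Q_n$ containing $-1$. Hence $Q_n$ and $-Q_n$ either coincide (exactly when $-1\in Q_n$) or are disjoint. Since $\chi_a(-r)=\compconj{\chi_a(r)}$ for every $r$, this gives $\chi_a(-Q_n)=\compconj{\chi_a(Q_n)}$, and therefore
\begin{equation*}
\ld_a=\chi_a(V_n)=\begin{cases}\chi_a(Q_n)&\text{if }-1\in Q_n,\\ \chi_a(Q_n)+\compconj{\chi_a(Q_n)}&\text{if }-1\notin Q_n.\end{cases}
\end{equation*}

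For the multiplicativity step, I would invoke the Chinese Remainder Theorem isomorphism $\Zl_n\cong\Zl_{p_1^{t_1}}\times\cdots\times\Zl_{p_s^{t_s}}$, under which a unit squares componentwise; this identifies $Q_n$ with $Q_{p_1^{t_1}}\times\cdots\times Q_{p_s^{t_s}}$ and factors the character $\chi_a$ into the product $\prod_k\chi_{a_k}$ of the component characters when $a$ is identified with $(a_1,\ldots,a_s)$. Summing the product over the product set yields $\chi_a(Q_n)=\prod_{k=1}^s\chi_{a_k}(Q_{p_k^{t_k}})$. Now the proof splits according to whether $-1\in Q_n$, and $-1\in Q_n$ holds precisely when $-1\in Q_{p_k^{t_k}}$ for every $k$, i.e.\ when $r=s$ (membership for each prime power being governed by Lemma \ref{lp}). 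When $r=s$ the dichotomy above immediately delivers part (i).

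For part (ii), with $r<s$ we have $-1\notin Q_n$, so $\ld_a=\chi_a(Q_n)+\compconj{\chi_a(Q_n)}$. Writing $A=\prod_{k=1}^r\chi_{a_k}(Q_{p_k^{t_k}})$ and $B=\prod_{k=r+1}^s\chi_{a_k}(Q_{p_k^{t_k}})$, the key observation is that each factor of $A$ is real: for $k\leq r$ we have $-1\in Q_{p_k^{t_k}}$, so $-Q_{p_k^{t_k}}=Q_{p_k^{t_k}}$ and hence $\chi_{a_k}(Q_{p_k^{t_k}})=\compconj{\chi_{a_k}(Q_{p_k^{t_k}})}$. Thus $A$ is real, and $\ld_a=AB+\compconj{AB}=A\,(B+\compconj{B})$, which is exactly the bracketed expression of part (ii). The main obstacle I anticipate is the bookkeeping in the multiplicativity step: one must check carefully that the CRT identification of $a$ with $(a_1,\ldots,a_s)$ is the same identification under which the additive character $\chi_a$ factors as $\prod_k\chi_{a_k}$, i.e.\ that the exponents $ax/n$ and $\sum_k a_k x_k/p_k^{t_k}$ agree modulo $1$. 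Once this compatibility of conventions is pinned down, the remaining computations are routine.
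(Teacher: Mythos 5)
This theorem is quoted from Huang \cite{quc} and the paper supplies no proof of its own, so there is nothing internal to compare against; judged on its merits, your argument is correct and is the standard derivation one would expect \cite{quc} to contain. The coset observation ($Q_n$ is the image of squaring on $\Zl_n^*$, hence a subgroup, so $-Q_n$ equals $Q_n$ or is disjoint from it according as $-1\in Q_n$ or not), the identity $\chi_a(-Q_n)=\compconj{\chi_a(Q_n)}$, the CRT factorization $\chi_a(Q_n)=\prod_k\chi_{a_k}(Q_{p_k^{t_k}})$, and the reality of the factors indexed by $k\le r$ together give exactly parts (i) and (ii). The one point you flag --- whether the identification $a\leftrightarrow(a_1,\dots,a_s)$ is the one under which the additive character splits --- is indeed the only delicate step, but it is harmless: with the naive reduction $a_k=a\bmod p_k^{t_k}$ the character factors as $\prod_k\chi_{m_ka_k}$ for fixed units $m_k\in\Zl_{p_k^{t_k}}^*$, and since $(a_1,\dots,a_s)\mapsto(m_1a_1,\dots,m_sa_s)$ is a bijection of $\Zl_{p_1^{t_1}}\times\cdots\times\Zl_{p_s^{t_s}}$, the displayed formulas still enumerate the full multiset of eigenvalues of $\Gn$ as $a$ ranges over $\Zl_n$ (alternatively, one takes the identification defined by the CRT idempotents, under which the factorization is exact). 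This is consistent with how the theorem is actually used later in the paper, where only the set of values $\ld_a$ attained for suitable choices of the components $a_k$ matters.
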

	Using Theorems \ref{quc1} in	Theorem \ref{quc2}, one can explicitly calculate the eigenvalues $\ld_a$ of the adjacency matrix of $\Gn$. In the following two sections, we use the explicit expression of $\ld_a$ in the study of periodicity and perfect state transfer of $\Gn$.
	
	\section{Periodicity}
	Let $n$ be a positive integer. The $n$-th \emph{cyclotomic polynomial} $\Phi_n(x)$ is the monic polynomial defined by $$\Phi_n(x)=\prod_{\xi} (x-\xi),$$ where the product runs over all primitive complex $n$-th roots of unity. See \cite{galois} for more information about cyclotomic polynomials.
	\begin{theorem}\emph{\cite{galois}}\label{alg1}
		Let $\Phi_n(x)$ be the $n$-th cyclotomic polynomial. Then, the following are true.
		\begin{enumerate}[label=(\roman*)]		
			\item $\Phi_n(x)$ is an irreducible polynomial in $\Zl[x]$.
			\item $x^n-1=\prod\limits_{m|n}\Phi_m(x)$.
		\end{enumerate}
	\end{theorem}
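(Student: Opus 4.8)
The plan is to prove the two parts in the natural order, deriving (ii) first (since it simultaneously delivers $\Phi_n(x)\in\Zl[x]$) and then using that factorization, together with a reduction-modulo-$p$ argument, to establish the irreducibility asserted in (i).

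For part (ii), I would begin from the fact that the roots of $x^n-1$ in $\Cl$ are exactly the $n$-th roots of unity, and that these are distinct because the derivative $nx^{n-1}$ shares no root with $x^n-1$. Each such root $\zeta$ has a well-defined multiplicative order $m$, which must divide $n$, and $\zeta$ is then a primitive $m$-th root of unity, i.e. a root of $\Phi_m(x)$; conversely every primitive $m$-th root of unity with $m\divides n$ is an $n$-th root of unity. Grouping the simple roots of $x^n-1$ according to their order gives the partition underlying $x^n-1=\prod_{m\divides n}\Phi_m(x)$, and since both sides are monic with the same roots they coincide as polynomials. That $\Phi_n(x)\in\Zl[x]$ then follows by strong induction on $n$: writing $\Phi_n(x)=(x^n-1)\big/\prod_{m\divides n,\,m<n}\Phi_m(x)$, the denominator is monic with integer coefficients by the inductive hypothesis, and the exact quotient of a monic integer polynomial by a monic integer polynomial again lies in $\Zl[x]$ (apply the division algorithm in $\Zl[x]$, which is valid because the divisor is monic, and invoke uniqueness of division over $\Ql$).

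For part (i), I would fix a primitive $n$-th root of unity $\zeta$, let $f(x)$ be its monic minimal polynomial over $\Ql$, and write $\Phi_n(x)=f(x)g(x)$ with $f,g$ monic in $\Zl[x]$ (using Gauss's lemma and the fact that $\zeta$ is an algebraic integer). The goal is to show $g=1$, equivalently that \emph{every} primitive $n$-th root of unity is a root of $f$. The crux is the claim that if $\alpha$ is a root of $f$ and $p$ is a prime with $p\nmid n$, then $\alpha^p$ is again a root of $f$. To prove it, suppose instead $\alpha^p$ is a root of $g$; then $\alpha$ is a root of $g(x^p)$, so $f(x)\divides g(x^p)$ in $\Zl[x]$. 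Reducing modulo $p$ and using the Frobenius identity $g(x^p)\equiv g(x)^p \Mod{p}$, I would conclude that $\bar f$ and $\bar g$ share an irreducible factor in $\Fl_p[x]$, so $\bar\Phi_n=\bar f\,\bar g$, and hence $x^n-1$, has a repeated factor modulo $p$; but $x^n-1$ is separable over $\Fl_p$ whenever $p\nmid n$, since its derivative $nx^{n-1}$ is coprime to it, a contradiction. Finally, since every primitive $n$-th root of unity is $\zeta^k$ for some $k$ coprime to $n$, and $k$ factors into primes none dividing $n$, iterating the claim forces all such roots to be roots of $f$; therefore $f=\Phi_n$ and $\Phi_n$ is irreducible.

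I expect the reduction-modulo-$p$ step to be the main obstacle. Two points need care there: first, justifying that the integral divisibility $f\divides g(x^p)$ genuinely forces $\bar f$ and $\bar g$ to have a common factor after reduction; and second, converting "$\bar f,\bar g$ share a factor" into "$x^n-1$ is inseparable modulo $p$." The separability of $x^n-1$ for primes not dividing $n$ is the decisive number-theoretic input that makes the contradiction—and hence the whole argument—close.
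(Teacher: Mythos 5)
Your proof is correct and complete; the paper itself gives no proof of this theorem, citing it as a classical result from Morandi's \emph{Field and Galois Theory}, and your argument is precisely the standard one found there: part (ii) by grouping the (simple) $n$-th roots of unity according to their multiplicative order together with induction and monic division for integrality, and part (i) via Gauss's lemma, the claim that $\alpha\mapsto\alpha^p$ preserves roots of the minimal polynomial for $p\nmid n$, the Frobenius identity $g(x^p)\equiv g(x)^p\Mod{p}$, and the separability of $x^n-1$ over $\Fl_p$. The one point worth making explicit in the iteration at the end is that each intermediate power $\zeta^{p_1\cdots p_j}$ remains a primitive $n$-th root of unity (so that the dichotomy ``root of $f$ or root of $g$'' applies to it), but this is immediate since each $p_i\nmid n$.
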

	The following lemma is a direct consequence of Theorem \ref{alg1}.
	\begin{lema}\label{m2}
		Let $f(x)$ be a monic polynomial of degree $n$ with coefficients in $\Ql$. Then, the solutions of $f(x)=0$ are the real parts of some roots of unity if and only if the polynomial $(2x)^nf\left( (x+x^{-1})/2\right)$ is a product of some cyclotomic polynomials.
	\end{lema}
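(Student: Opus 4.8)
The plan is to pass from $f$ to the self-reciprocal polynomial it induces under the substitution $y=(x+x^{-1})/2$, which is exactly the substitution that turns ``real part of a root of unity'' into ``root of unity.'' First I would write $f(y)=\prod_{j=1}^{n}(y-\alpha_j)$ over $\Cl$, where $\alpha_1,\dots,\alpha_n$ are the roots of $f$ listed with multiplicity, and set $g(x):=(2x)^n f\!\left((x+x^{-1})/2\right)$. A direct computation collapses the factors to
$$ g(x)=\prod_{j=1}^{n}\left(x^2-2\alpha_j x+1\right), $$
so $g$ is monic of degree $2n$; writing $f(y)=\sum_{k=0}^{n}c_k y^k$ with $c_n=1$ gives $g(x)=\sum_{k=0}^{n}c_k\,2^{\,n-k}x^{\,n-k}(x^2+1)^k$, which shows $g\in\Ql[x]$. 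The whole statement will follow once the roots of $g$ are matched with those of $f$.

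Next I would record the elementary dictionary between the two root sets. Each quadratic factor $x^2-2\alpha_j x+1$ has a reciprocal pair of roots $z_j,z_j^{-1}$ with $\tfrac12(z_j+z_j^{-1})=\alpha_j$, and I claim $\alpha_j\in\Re$ if and only if $z_j$ is a root of unity. If $z_j$ is a root of unity then $|z_j|=1$, so $z_j^{-1}=\overline{z_j}$ and $\alpha_j=\real(z_j)\in\Re$. Conversely, if $\alpha_j=\real(\zeta)=\tfrac12(\zeta+\zeta^{-1})$ for some root of unity $\zeta$, then $\zeta$ is a root of $x^2-2\alpha_j x+1$, so $z_j\in\{\zeta,\zeta^{-1}\}$ is a root of unity. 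Since the multiset of roots of $g$ is exactly $\{z_j,z_j^{-1}:1\le j\le n\}$ while that of $f$ is $\{\alpha_j\}$, this equivalence says: every root of $f$ lies in $\Re$ if and only if every root of $g$ is a root of unity.

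With this dictionary both implications are quick. If $g$ is a product of cyclotomic polynomials, then each of its roots is a root of unity, so every $\alpha_j$ lies in $\Re$; this is the easy direction. For the converse, suppose every $\alpha_j$ lies in $\Re$, so every root of $g$ is a root of unity. Since $g$ is monic with rational coefficients, I would factor it into monic irreducibles over $\Ql$; each such factor is the minimal polynomial over $\Ql$ of one of its roots, a primitive $m$-th root of unity for some $m$, and by Theorem \ref{alg1}(i) the minimal polynomial of such a root is precisely $\Phi_m(x)$. Hence every irreducible factor of $g$ is a cyclotomic polynomial, so $g$ is a product of cyclotomic polynomials (repetition allowed).

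I expect the last step to be the main obstacle, since it is the only place using genuine arithmetic input: one must know that a monic rational polynomial all of whose roots are roots of unity factors over $\Ql$ into cyclotomic polynomials, which rests on the irreducibility of $\Phi_m$ (Theorem \ref{alg1}(i)) together with unique factorization in $\Ql[x]$. The only other points requiring care are the verification that $g$ really has rational coefficients and the bookkeeping of multiplicities (e.g.\ when $\alpha_j=\pm1$ the corresponding factor is $(x\mp1)^2$, namely $\Phi_1(x)^2$ or $\Phi_2(x)^2$), neither of which presents any real difficulty.
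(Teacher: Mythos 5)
Your proposal is correct and follows essentially the same route as the paper: factor $f$ over $\Cl$, collapse $g(x)=(2x)^nf\bigl((x+x^{-1})/2\bigr)$ into the quadratic factors $x^2-2\alpha_jx+1$, observe the equivalence between roots of $f$ lying in $\Re$ and roots of $g$ being roots of unity, and conclude via unique factorization in $\Ql[x]$ together with the irreducibility of cyclotomic polynomials. You simply spell out the steps the paper leaves as ``easy to observe.''
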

	\begin{proof}
		Let $g(x)=(2x)^nf\left( (x+x^{-1})/2\right)$.  Write $f(x)=(x-\alpha_1)\cdots(x-\alpha_n)$, where $\alpha_1,\hdots, \alpha_n\in \Cl$. Then, we have $g(x)=(x^2-2\alpha_1x+1)\cdots(x^2-2\alpha_nx+1)$. Note that $g(x)$ is a monic polynomial in $\Ql[x]$. It is easy to observe that the solutions of $f(x)=0$ are the real parts of some roots of unity if and only if the solutions of $g(x)=0$ are roots of unity. Since $\Ql[x]$ is a unique factorization domain, $g(x)$ can be factorized into irreducible polynomials over $\Ql$. Therefore by Theorem \ref{alg1},  the result follows.
	\end{proof}
	
	\begin{lema}\emph{\cite{complex}}\label{root}
		Let $f(x)$ be a polynomial with coefficients in $\Ql$. If $a+\sqrt{b}$ is a root of $f(x)$, where $a,b\in \Ql$ and $b$ is not a square, then $a-\sqrt{b}$ is also a root of $f(x)$, with the same multiplicity.
	\end{lema}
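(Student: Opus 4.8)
The plan is to exploit the unique nontrivial automorphism of the quadratic field $\Ql(\sqrt{b})$. Since $b$ is not a square, in particular $b\neq 0$ and $\sqrt{b}\notin\Ql$, so $x^2-b$ is the minimal polynomial of $\sqrt{b}$ over $\Ql$ and $[\Ql(\sqrt{b}):\Ql]=2$. Let $\sigma$ be the generator of $\mathrm{Gal}(\Ql(\sqrt{b})/\Ql)$, acting by $\sigma(p+q\sqrt{b})=p-q\sqrt{b}$ for $p,q\in\Ql$; it fixes $\Ql$ pointwise and is a ring automorphism. Writing $\alpha=a+\sqrt{b}$, its conjugate is $\sigma(\alpha)=a-\sqrt{b}$.

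First I would settle the root claim. If $f(x)=\sum_i c_i x^i$ with $c_i\in\Ql$, then since $\sigma$ is a ring homomorphism fixing each $c_i$ we have $f(\sigma(\alpha))=\sum_i c_i\,\sigma(\alpha)^i=\sigma\!\left(\sum_i c_i\alpha^i\right)=\sigma(f(\alpha))$. From $f(\alpha)=0$ and $\sigma(0)=0$ it follows that $f(a-\sqrt{b})=\sigma(f(\alpha))=0$, so $a-\sqrt{b}$ is indeed a root of $f$.

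Next, for the equality of multiplicities I would invoke the derivative criterion: the multiplicity of a root $r$ of $f$ is the unique $m$ for which $f(r)=f'(r)=\cdots=f^{(m-1)}(r)=0$ while $f^{(m)}(r)\neq 0$. Each derivative $f^{(j)}$ again lies in $\Ql[x]$, so the same commutation gives $f^{(j)}(a-\sqrt{b})=\sigma\!\left(f^{(j)}(\alpha)\right)$; because $\sigma$ is injective with $\sigma(0)=0$, we get $f^{(j)}(\alpha)=0$ if and only if $f^{(j)}(a-\sqrt{b})=0$. Hence the length of the initial run of vanishing derivatives is the same at $\alpha$ and at $a-\sqrt{b}$, so the two multiplicities coincide. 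If one prefers to avoid Galois language, the multiplicity claim also follows by induction on $\deg f$: the minimal polynomial $p(x)=x^2-2ax+(a^2-b)\in\Ql[x]$ of $\alpha$ divides $f$ in $\Ql[x]$ (polynomial division leaves a rational remainder of degree at most $1$ that vanishes at the irrational $\alpha$, hence is $0$), so $f=p\,q$ with $q\in\Ql[x]$; since $\alpha$ and $a-\sqrt{b}$ are distinct simple roots of $p$, each loses exactly one in passing to $q$, and the inductive hypothesis applied to the lower-degree $q\in\Ql[x]$ closes the argument.

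The main obstacle is the multiplicity statement rather than the root statement, which is immediate from conjugation. Tracking multiplicities requires either the derivative characterization or the careful induction above, and in both cases one must use that the two conjugate roots are genuinely distinct, i.e. $\alpha\neq a-\sqrt{b}$; this is exactly where the hypothesis that $b$ is not a square (so $b\neq 0$ and $\sqrt{b}\neq 0$) is essential.
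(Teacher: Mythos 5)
Your proof is correct. Note that the paper itself gives no proof of this lemma --- it is quoted from the cited reference --- so there is nothing internal to compare against; what you have written is a complete, self-contained justification. Both halves of your argument are sound: the conjugation automorphism $\sigma$ of $\Ql(\sqrt{b})$ fixes the rational coefficients and hence commutes with evaluation of $f$ and of every derivative $f^{(j)}\in\Ql[x]$, which settles the root claim and, via the derivative characterization of multiplicity (valid here since we are in characteristic $0$), the multiplicity claim as well. The alternative induction through the minimal polynomial $x^2-2ax+(a^2-b)$ is equally valid, and you correctly isolate the one place where the hypothesis that $b$ is not a square is indispensable: it guarantees $\sqrt{b}\notin\Ql$ (so the degree-$\le 1$ remainder vanishes identically and the extension is genuinely quadratic) and $b\neq 0$ (so the two conjugate roots are distinct). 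One cosmetic remark: the lemma does not require $b>0$; if $b<0$ the number $\sqrt{b}$ is imaginary, but your argument is insensitive to this, and in the paper's actual application (to $\spec_P^{\Delta}(G)\subseteq\Rl$) only $b>0$ arises anyway.
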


	Let $\Delta=\{a\pm\sqrt{b}:a,b\in \Ql~\text{and}~b ~\text{is not a square}\}$, and $\overline{\Delta}=\Rl\setminus(\Ql\cup\Delta)$. For a subset $F$ of real numbers, let $\spec_P^F(G)=F\cap\spec_P(G)$.
	
	\begin{theorem}\label{ls}
		Let $G$ be a regular graph with discriminant matrix $P$. Then, $G$ is periodic if and only if $$\spec_P^\Ql(G)\subseteq\left\{\pm1,\pm\frac{1}{2},0\right\},~\spec_P^\Delta(G)\subseteq\left\{\pm \frac{\sqrt{3}}{2}, \pm\frac{1}{4}\pm\frac{\sqrt{5}}{4},\pm\frac{1}{\sqrt{2}}\right\}~\text{and}~\spec_P^{\overline{\Delta}}(G)\subset \Re.$$
	\end{theorem}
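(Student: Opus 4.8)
The plan is to reduce everything to a description of which real algebraic numbers lie in $\Re$, the set of real parts of roots of unity. By Lemma \ref{reg} the discriminant of a $k$-regular graph is $P=\frac1k A$, so $\spec_P(G)$ consists of real algebraic numbers, and by Lemma \ref{m1} the graph $G$ is periodic precisely when $\spec_P(G)\subseteq\Re$. Since $\Ql$, $\Delta$ and $\overline{\Delta}$ are pairwise disjoint with union $\Rl$, the spectrum splits as the disjoint union of $\spec_P^\Ql(G)$, $\spec_P^\Delta(G)$ and $\spec_P^{\overline{\Delta}}(G)$. Hence $\spec_P(G)\subseteq\Re$ is equivalent to the three conditions $\spec_P^\Ql(G)\subseteq\Re\cap\Ql$, $\spec_P^\Delta(G)\subseteq\Re\cap\Delta$ and $\spec_P^{\overline{\Delta}}(G)\subseteq\Re$. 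So the whole statement reduces to computing the two finite sets $\Re\cap\Ql$ and $\Re\cap\Delta$ explicitly; for the $\overline{\Delta}$ part there is nothing to simplify, the condition is just membership in $\Re$.

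To compute $\Re\cap\Ql$, I would take a rational number $\mu$ and apply Lemma \ref{m2} to the monic polynomial $f(x)=x-\mu$: then $g(x)=(2x)f\bigl((x+x^{-1})/2\bigr)=x^2-2\mu x+1$, and $\mu\in\Re$ iff $g$ is a product of cyclotomic polynomials. Being monic of degree $2$ with constant term $1$, such a $g$ must be either a single quadratic cyclotomic polynomial $\Phi_m$ with $\varphi(m)=2$ (so $m\in\{3,4,6\}$) or a product of two linear ones with constant term $1$, namely $\Phi_1^2$ or $\Phi_2^2$, the remaining product $\Phi_1\Phi_2=x^2-1$ being excluded by its constant term. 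Matching the coefficient of $x$ in each surviving case reads off $\mu\in\{0,\pm\tfrac12,\pm1\}$, which is exactly $\Re\cap\Ql$ (this is Niven's theorem).

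For $\Re\cap\Delta$, I would take $\mu\in\Delta$ and let $f$ be its minimal polynomial over $\Ql$, of degree $2$. Since $\Re$ is a union of complete sets of Galois conjugates over $\Ql$ (the conjugates of $\cos(2\pi j/m)$ are again of this form), the condition $\mu\in\Re$ is equivalent to all roots of $f$ lying in $\Re$, which by Lemma \ref{m2} is equivalent to $g(x)=(2x)^2 f\bigl((x+x^{-1})/2\bigr)$ being a product of cyclotomic polynomials. Here $g$ is monic of degree $4$ and squarefree (as $f$ is), and because $f$ is irreducible over $\Ql$ the polynomial $g$ cannot have any cyclotomic factor $\Phi_m$ with $\varphi(m)\le 2$: such a factor would contribute a rational real part, forcing a rational root of $f$. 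Thus $g$ must be a single cyclotomic polynomial $\Phi_m$ of degree $4$, that is $\varphi(m)=4$, which holds exactly for $m\in\{5,8,10,12\}$. Evaluating the real parts of the primitive $m$-th roots of unity for these four values yields $\pm\tfrac{\sqrt3}{2}$ (from $m=12$), $\pm\tfrac1{\sqrt2}$ (from $m=8$) and $\pm\tfrac14\pm\tfrac{\sqrt5}{4}$ (from $m=5,10$), giving precisely $\Re\cap\Delta$. Reassembling the three containments then yields both directions of the theorem.

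The main obstacle is the degree-$2$ analysis: one must justify that an irreducible quadratic $f$ forces $g=\Phi_m$ with $\varphi(m)=4$ (ruling out all factorizations of $g$ into lower-degree cyclotomics) and then evaluate the four pairs of real parts correctly. A small but essential point is the Galois-closure remark that lets me pass from the statement $\mu\in\Re$ to the statement that every root of the minimal polynomial of $\mu$ lies in $\Re$, which is exactly what makes Lemma \ref{m2} applicable to a single eigenvalue. Everything else—the partition of $\Rl$, the degree-$1$ enumeration, and the final reassembly—is routine.
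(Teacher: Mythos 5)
Your proposal is correct and follows essentially the same route as the paper: reduce to $\spec_P(G)\subset\Re$ via Lemma \ref{m1}, then apply Lemma \ref{m2} to linear and quadratic rational polynomials and enumerate the cyclotomic polynomials of degree at most $4$. The only substantive difference is how the conjugate $a-\sqrt{b}$ enters: the paper invokes Lemma \ref{root} on the rational characteristic polynomial of $P=\frac{1}{k}A$ to place it in the spectrum, whereas you use the Galois-stability of $\Re$ to characterize $\Re\cap\Ql$ and $\Re\cap\Delta$ as abstract sets --- both are valid, and yours has the minor advantage of isolating a graph-independent statement (Niven's theorem and its quadratic analogue).
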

	\begin{proof}
		Let $G$ be a $k$-regular periodic graph. From Lemma \ref{m1}, we know that $\spec_P(G)\subset\Re$, and hence $\spec_P^{\overline{\Delta}}\subset \Re$. Now let $\mu\in\spec_P^\Ql(G)$ and consider $f(x)=x-\mu$. Since $G$ is periodic and $f(x)\in \Ql[x]$, by Lemma \ref{m2}, $(2x)f\left(\frac{x+x^{-1}}{2}\right)$ is a product of cyclotomic polynomials. Note that 
		$$(2x)f\left(\frac{x+x^{-1}}{2}\right)=x^2-2\mu x+1.$$ 
		Further, the cyclotomic polynomials of degree at most $2$ are $\Phi_1(x)$, $\Phi_2(x)$, $\Phi_3(x)$, $\Phi_4(x)$ and $\Phi_6(x)$. Since $x^2-2\mu x +1$ is a product of cyclotomic polynomials, we find that $\mu\in\left\{\pm1,\pm\frac{1}{2},0\right\}.$

		Now, let $a+\sqrt{b}\in \spec_P^\Delta(G)$. Since $G$ is a $k$-regular graph,  $P=\frac{1}{k}A$, where $A$ is the adjacency matrix of $G$. So, the characteristic polynomial of $P$ has rational coefficients. Therefore by Lemma  \ref{root}, we have $a-\sqrt{b}\in\spec_P^\Delta(G)$. Now, let $f(x)=(x-(a+\sqrt{b}))(x-(a-\sqrt{b}))$. Then, $f(x)$ is a monic polynomial with rational coefficients and   
		$$(2x)^2f\left(\frac{x+x^{-1}}{2}\right)=x^4-4ax^3+(4(a^2-b)+2)x^2-4ax+1.$$ 
		Since $G$ is periodic, by Lemma \ref{m2},   $x^4-4ax^3+(4(a^2-b)+2)x^2-4ax+1$ is a product of cyclotomic polynomials. One can observe that this polynomial cannot be expressed as a product of polynomials from $\left\{\Phi_1(x), \Phi_2(x), \Phi_3(x),\Phi_4(x), \Phi_6(x)\right\}$. Therefore, this polynomial must be a cyclotomic polynomial of degree $4$. Thus,
		$$(2x)^2f\left(\frac{x+x^{-1}}{2}\right)\in\left\{\Phi_5(x),\Phi_8(x),\Phi_{10}(x),\Phi_{12}(x)\right\}.$$
		Comparing $x^4-4ax^3+(4(a^2-b)+2)x^2-4ax+1$ with $\Phi_5(x),~ \Phi_8(x),~ \Phi_{10}(x)~\text{and}~\Phi_{12}(x)$, we find that $$a\pm \sqrt{b}\in\left\{\pm\frac{\sqrt{3}}{2}, \pm\frac{1}{4}\pm\frac{\sqrt{5}}{4},\pm\frac{1}{\sqrt{2}}\right\}.$$ 
		
		Conversely, assume that $$\spec_P^\Ql(G)\subseteq\left\{\pm1,\pm\frac{1}{2},0\right\},~\spec_P^\Delta(G)\subseteq\left\{\pm \frac{\sqrt{3}}{2}, \pm\frac{1}{4}\pm\frac{\sqrt{5}}{4},\pm\frac{1}{\sqrt{2}}\right\}~\text{and}~\spec_P^{\overline{\Delta}}(G)\subset \Re.$$
		Then, $\spec_P(G)\subset \Re$. Therefore by Lemma \ref{m1}, $G$ is periodic.
	\end{proof}

	\begin{lema}\label{main}
		Let $p$ be a prime factor of $n$. 	If $\Gn$ is periodic, then $p\in\{2,3,5\}$.
	\end{lema}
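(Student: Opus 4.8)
The plan is to argue by contraposition: assuming some prime $p\geq 7$ divides $n$, I will exhibit a single eigenvalue of the discriminant $P$ that violates the spectral constraints of Theorem \ref{ls}, so that $\Gn$ cannot be periodic. Since $\Gn$ is a $|V_n|$-regular Cayley graph, Lemma \ref{reg} gives $P=\frac{1}{k}A$ with $k=\lambda_0=|V_n|$, where $\lambda_0$ is obtained by evaluating the eigenvalue formula of Theorem \ref{quc2} at $a=0$. Hence the eigenvalues of $P$ are exactly the ratios $\ld_a/k$, and it suffices to produce one index $a$ for which $\ld_a/k$ lies outside the admissible sets of Theorem \ref{ls}.

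The key device is to isolate the contribution of the prime power $p^t$ dividing $n$ via the Chinese Remainder Theorem. Writing $a=(a_1,\dots,a_s)$ and setting every component to $0$ except the $p$-component, each factor $\chi_0(Q_{p_j^{t_j}})$ indexed by a prime $\neq p$ equals $\tfrac{p_j^{t_j-1}(p_j-1)}{2}$ (or $1,1,2^{t_j-3}$ when $p_j=2$) by Theorem \ref{quc1}, i.e.\ a positive rational. Consequently, whether the product formula of Theorem \ref{quc2}(i) (case $r=s$) or the product-of-conjugate-sum formula of Theorem \ref{quc2}(ii) (case $r<s$) applies, every factor coming from a prime $\neq p$ appears identically in $\ld_a$ and in $\ld_0=k$, and these factors cancel in the ratio. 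By Lemma \ref{lp} the residue $p\bmod 4$ determines whether $p$ sits among the first $r$ indices (if $p\equiv1\pmod 4$, since then $-1\in Q_{p^t}$) or contributes to the conjugate bracket (if $p\equiv3\pmod 4$, since then $-1\notin Q_{p^t}$). In either situation the ratio reduces to $\chi_{a_p}(Q_{p^t})/\chi_0(Q_{p^t})$ in the first case and to $2\real\!\big(\chi_{a_p}(Q_{p^t})\big)/\big(2\chi_0(Q_{p^t})\big)$ in the second.

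With the cancellation in hand, the computation is short. Choosing $a_p\in p^{t-1}\Zl_p\setminus\{0\}$ with Legendre symbol $+1$ and using $G_p(1)=1$ for $p\equiv1\pmod 4$ (Lemma \ref{kk}), one obtains
\[
\frac{\ld_a}{k}=\frac{\tfrac12 p^{t-1}(\sqrt{p}-1)}{\tfrac12 p^{t-1}(p-1)}=\frac{\sqrt{p}-1}{p-1}=\frac{1}{\sqrt{p}+1},
\]
which is irrational with $\Ql\!\left(\tfrac{1}{\sqrt p+1}\right)=\Ql(\sqrt p)$, so it lies in $\Delta$ and by Theorem \ref{ls} would have to belong to $\left\{\pm\tfrac{\sqrt3}{2},\pm\tfrac14\pm\tfrac{\sqrt5}{4},\pm\tfrac{1}{\sqrt2}\right\}$. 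For $p\equiv3\pmod 4$, $G_p(1)=-\iu$ makes the $\sqrt{p}$ term purely imaginary, so $\real\!\big(\chi_{a_p}(Q_{p^t})\big)=-\tfrac12 p^{t-1}$ and
\[
\frac{\ld_a}{k}=\frac{-p^{t-1}}{p^{t-1}(p-1)}=\frac{-1}{p-1}\in\Ql,
\]
which by Theorem \ref{ls} would have to lie in $\left\{\pm1,\pm\tfrac12,0\right\}$.

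Finally I would rule out both memberships for $p\geq 7$. In the first case $\Ql(\sqrt p)$ differs from $\Ql(\sqrt2),\Ql(\sqrt3),\Ql(\sqrt5)$ because $p$ is a prime distinct from $2,3,5$, so $\tfrac{1}{\sqrt p+1}$ cannot equal any value in the admissible $\Delta$-set (whose square-free parts are $2,3,5$). In the second case $\tfrac{1}{p-1}\leq\tfrac16<\tfrac12$ and is nonzero, so $\tfrac{-1}{p-1}\notin\{\pm1,\pm\tfrac12,0\}$. Either way Theorem \ref{ls} is contradicted, so no prime $\geq 7$ divides $n$. The main obstacle is purely one of bookkeeping: checking that the off-$p$ factors cancel uniformly across both branches ($r=s$ versus $r<s$) of Theorem \ref{quc2}, and confirming that the position of $p$ among the indices is governed exactly by $p\bmod 4$ through Lemma \ref{lp}; the two reduced values $\tfrac{1}{\sqrt p+1}$ and $\tfrac{-1}{p-1}$ then fall out immediately.
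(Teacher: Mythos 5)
Your proposal is correct and follows essentially the same route as the paper: both isolate the contribution of the prime $p$ by zeroing out the other CRT components of $a$, obtain the discriminant eigenvalue $\tfrac{1}{1+\sqrt{p}}$ when $p\equiv 1\pmod 4$ and $\tfrac{1}{1-p}$ when $p\equiv 3\pmod 4$, and then show these cannot be real parts of roots of unity unless $p\in\{3,5\}$. The only difference is in the endgame: the paper pairs $\tfrac{1}{1\pm\sqrt{p}}$ into a rational quadratic and applies Lemma \ref{m2} directly (testing integrality of the resulting degree-four polynomial), whereas you invoke Theorem \ref{ls} and exclude membership in its admissible sets by comparing quadratic fields — both rest on the same cyclotomic-polynomial machinery.
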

	\begin{proof}
		Let $p_1^{t_1}p_2^{t_2}\hdots p_s^{t_s}$ be the prime factorization of $n$ such that $-1\in Q_{p_k^{t_k}}$ for $k\in\{1,\hdots, r\}$ and $-1\notin Q_{p_k^{t_k}}$ for $k\in\{r+1, \hdots, s\},$ where $0\leq r\leq s$. We identify $\Zl_n$ with 
		\begin{equation*}
			\Zl_n=\Zl_{p_1^{t_1}}\times \Zl_{p_2^{t_2}}\times \cdots \times \Zl_{p_s^{t_s}}.\end{equation*} 
		With this identification, we have
		$$	Q_n=\left( Q_{p_1^{t_1}},\hdots,Q_{p_r^{t_r}},Q_{p_{r+1}^{t_{r+1}}}, \hdots, Q_{p_s^{t_s}}\right)~\text{and}~ -Q_n=\left( Q_{p_1^{t_1}},\hdots,Q_{p_r^{t_r}},-Q_{p_{r+1}^{t_{r+1}}}, \hdots, -Q_{p_s^{t_s}}\right) .$$
		If $r=s$ then $Q_n=-Q_n$, and if $r<s$ then $Q_n\cap (-Q_n)=\emptyset$.	 First, assume that $r<s$. Since  $Q_n\cap (-Q_n)=\emptyset$, we find that the regularity of the graph $\Gn$ is $2|Q_n|$, which is equal to  $2 |Q_{p_1^{t_1}}|\hdots |Q_{p_s^{t_s}}|$.
		
		Let $a=(a_1,a_2,\hdots, a_s)\in \Zl_n$ for some $a_i\in \Zl_{p_i^{t_i}}$, where $1\leq i\leq s$. By Theorem \ref{quc2}, the eigenvalues of the adjacency matrix of $\mathcal{G}_{\Zl_n}$ are given by \begin{equation*}
			\ld_a=\chi_{a_1}\left(Q_{p_1^{t_1}}\right)\hdots \chi_{a_r} \left(Q_{p_r^{t_r}}\right)  \left[ \chi_{a_{r+1}}\left(Q_{p_{r+1}^{t_{r+1}}}\right)\hdots \chi_{a_s} \left(Q_{p_s^{t_s}}\right)+ \compconj{\chi_{a_{r+1}}\left(Q_{p_{r+1}^{t_{r+1}}}\right) } \hdots \compconj{\chi_{a_s} \left(Q_{p_s^{t_s}}\right)} \right]	
		\end{equation*}
		for each $a\in \Zl_n$. Let $p$ be a prime such that $p\divides n$. If $p=2$, then we are done. Suppose $p\neq 2$. We consider two cases.
		
		\noindent\textbf{Case 1.} $p\equiv 1 \pmod 4$. By Lemma \ref{lp}, $-1\in Q_{p_t}$ for $t\geq 1$. Therefore $0<r<s$. Assume that $p_1=p$. Note that $\chi_0(Q_{p^t})=|Q_{p^t}|$. Therefore for $a=(a_1,0,\hdots, 0)\in \Zl_n$, we have $$\ld_a= 2 |Q_{p_2^{t_2}}|\hdots |Q_{p_s^{t_s}}| \chi_{a_1}\left(Q_{p_1^{t_1}}\right).$$
		Hence, the corresponding eigenvalue of the discriminant $P$ is given by
		\begin{align*}
			\mu_a&=\frac{1}{| Q_{p_1^{t_1}}|} \chi_{a_1}\left(Q_{p_1^{t_1}}\right).
		\end{align*}
		Choose $b_1\in p_1^{t_1-1}\Zl_{p_1}\setminus \{0\}$ such that $\left( \frac{b_1}{p_1^{t_1-1}}\Big/ p_1\right)=1$, and let $b=(b_1, 0, \hdots, 0)$. Using Lemmas \ref{kkk}, \ref{kk} and \ref{quc1} in the proceeding expression of $\mu_a$, we have  $$\mu_b=\frac{1}{1+\sqrt{p_1}}.$$ Choose   $c_1\in p_1^{t_1-1}\Zl_{p_1}\setminus \{0\}$ such that $\left( \frac{c_1}{p_1^{t_1-1}}\Big/ p_1\right)=-1$, and let $c=(c_1, 0, \hdots, 0)$. Then, $$\mu_c=\frac{1}{1-\sqrt{p_1}}.$$
		If $\Gn$ is periodic, then by Lemma \ref{m1}, the discriminant eigenvalues $\mu_b$ and $\mu_c$ are real parts of some roots of unity. Consider the polynomial $f(x)=(x-\mu_b)(x-\mu_c)$. Then,
		$$f(x)=x^2-\frac{2}{1-p_1}x + \frac{1}{1-p_1} \in \Ql[x].$$ Therefore by Lemma \ref{m2}, $(2x)^2f\left(\frac{x+x^{-1}}{2}\right)$ is a product of some cyclotomic polynomials. Thus, we have $(2x)^2f\left(\frac{x+x^{-1}}{2}\right)\in \Zl[x].$ Now $$(2x)^2f\left(\frac{x+x^{-1}}{2}\right)= x^4-\frac{4}{1-p_1}x^3 + \frac{6-2p_1}{1-p_1}x^2- \frac{4}{1-p_1}x+1.$$
		Since $p_1\equiv 1$ (mod $4$), we see that $(2x)^2f\left(\frac{x+x^{-1}}{2}\right)\in \Zl[x]$ if and only if $p_1=5$. Thus, the only possible prime factor of $n$ is $5$.
		
		\noindent\textbf{Case 2.} $p\equiv 3 \pmod 4$. Assume that $p_s=p$. For $a=(0,0,\hdots, a_s)\in \Zl_n$, we have $$\ld_a= |Q_{p_1^{t_1}}|\cdots |Q_{p_{s-1}^{t_{s-1}}}| \left[ \chi_{a_s}\left(Q_{p_s^{t_s}}\right)+ \compconj{\chi_{a_s}\left(Q_{p_s^{t_s}}\right)} \right].$$
		Therefore, the corresponding eigenvalue of $P$ is given by $$\mu_a= \frac{1}{2|Q_{p_s^{t_s}}|}\left[ \chi_{a_s}\left(Q_{p_s^{t_s}}\right)+ \compconj{\chi_{a_s}\left(Q_{p_s^{t_s}}\right)} \right].$$
		Now choose $a_s\in p_s^{t_s-1}\Zl_{p_s}\setminus \{0\}$ such that $\left( \frac{a_s}{p_s^{t_s-1}}\Big/ p_s\right)=1$ and let $a=(0,\hdots, 0,a_s)$. Then, $$\mu_a=\frac{1}{1-p_s}.$$
		Consider the monic rational polynomial $f(x)=x-\mu_a$. If $\Gn$ is periodic, then by Lemma \ref{m2}, $(2x)f\left(\frac{x+x^{-1}}{2}\right)\in \Zl[x]$. Since $p_s\equiv 3$ (mod $4$), we must have $p_s=3$.
		
		Now we consider the case $r=s$. We find that the regularity of the graph $\Gn$ is $|Q_{p_1^{t_1}}|\cdots |Q_{p_s^{t_s}}|$. Further, the eigenvalues of the adjacency matrix of $\Gn$ are given by $$	\ld_a=\chi_{a_1}\left(Q_{p_1^{t_1}}\right)\hdots \chi_{a_r} \left(Q_{p_s^{t_s}}\right),$$ where $a=(a_1,\hdots,a_s)\in\Zl_n$ and $a_i\in Z_{p_i^{t_i}}$ for $1\leq i\leq s$. Let $p$ be a prime such that $p\divides n$. If $p=2$, we are done. As $r=s$, we have that $-1\in Q_{p_i^{t_i}} $ for $1\leq i\leq s$. Therefore by Lemma \ref{lp}, $p \equiv 1 \pmod 4$. Now proceeding as in Case 1, we find that $p=5$.
		
		Thus, we find that if $\Gn$ is periodic and $p$ is a prime factor of $n$, then $p\in\{2,3,5\}$.
	\end{proof}
	%%%%%%%%%%%%%%%%%%%%%%%%%%%%%%%%%%%%%%%%%%%%%%%%%%%%%%%%%%%%%%%%%%%%%%%%%%%%%%%%%%%%%%%%%%%%%%%%%
	\begin{lema}\label{k11}
		Let $n=p^t$, where $p$ is a prime number and $t$ is a positive integer. Then, $\Gn$ is periodic if and only if $p\in\{2,3,5\}$.
	\end{lema}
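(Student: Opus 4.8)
The plan is to separate the two implications, with the forward direction being essentially immediate and the reverse direction reducing to an explicit eigenvalue computation governed by Theorem \ref{ls}.

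For the ``only if'' direction, suppose $\Gn$ is periodic with $n=p^t$. Since $p$ is the unique prime factor of $n$, Lemma \ref{main} forces $p\in\{2,3,5\}$, and nothing further is required. For the converse, I would treat $p=2$, $p=3$ and $p=5$ in turn, in each case computing $\spec_P(\Gn)$ explicitly and comparing it with the admissible lists of Theorem \ref{ls}. Because $\Gn$ is regular, Lemma \ref{reg} gives $P=\tfrac{1}{k}A$ with $k=|V_n|$, so the discriminant eigenvalues are $\mu_a=\lambda_a/|V_n|$, where $\lambda_a$ are the adjacency eigenvalues of Equation \eqref{eev}. The crucial structural simplification is that for a prime power the factorization in Theorem \ref{quc2} collapses to a single factor: for $p=5$ we have $5\equiv 1\pmod 4$ and $-1\in Q_{5^t}$ by Lemma \ref{lp}, so $r=s=1$ and $\lambda_a=\chi_a(Q_{5^t})$; whereas for $p=3$ and for $p=2$ with $t\geq 2$ we have $-1\notin Q_{p^t}$, so $r=0<s=1$ and $\lambda_a=\chi_a(Q_{p^t})+\overline{\chi_a(Q_{p^t})}=2\real\,\chi_a(Q_{p^t})$.

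With this reduction the remaining work is mechanical. For $p=2$ with $t\geq 3$, inserting the two-branch formula of Theorem \ref{quc1}(i) together with $|V_{2^t}|=2^{t-2}$ from Lemma \ref{kkk}, I expect the eigenvalues $\mu_a=\cos\!\big(\tfrac{a\pi}{2^{t-1}}\big)$ on the first branch and $\mu_a=\real\big((-\iu)^{a/2^{t-2}}\big)$ on the second, giving exactly $\spec_P(\Ga)\subseteq\{1,0,-1,\pm\tfrac{1}{\sqrt 2}\}$; the cases $t=1,2$ (where $\Gn$ is $K_2$ and $C_4$) follow from the $t\leq 2$ branch of Theorem \ref{quc1}(i), yielding $\{1,-1\}$ and $\{1,0,-1\}$ respectively. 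For $p=3$ I would use Theorem \ref{quc1}(ii) with $G_3(1)=-\iu$ (Lemma \ref{kk}) and $|V_{3^t}|=2\cdot 3^{t-1}$ (Lemma \ref{kkk}); the imaginary part of the order-$3$ Gauss sum cancels on taking $2\real$, leaving $\spec_P(\Gb)\subseteq\{1,-\tfrac12,0\}$. For $p=5$ the Gauss sum is already real since $G_5(1)=1$, and the off-zero eigenvalues come out as $\tfrac{-1\pm\sqrt 5}{4}=\cos\tfrac{2\pi}{5},\cos\tfrac{4\pi}{5}$, so $\spec_P(\Gc)\subseteq\{1,0,\tfrac{-1\pm\sqrt5}{4}\}$. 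In every case the spectrum lies in the admissible sets of Theorem \ref{ls}, equivalently $\spec_P(\Gn)\subset\Re$, so Lemma \ref{m1} delivers periodicity.

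The main obstacle is not conceptual but bookkeeping, concentrated in the $p=2$, $t\geq 3$ case: one must carefully evaluate $\real$ of each of the eight distinguished character values in Theorem \ref{quc1}(i), tracking the powers of $-\iu$ and the cosine arguments $\tfrac{a\pi}{2^{t-1}}$ at $a\in\{2^{t-3},3\cdot 2^{t-3},5\cdot 2^{t-3},7\cdot 2^{t-3}\}$, so as to confirm that only $\{0,\pm1,\pm\tfrac{1}{\sqrt 2}\}$ occur and that no value falls outside the list of Theorem \ref{ls}.
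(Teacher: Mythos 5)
Your proposal is correct and follows essentially the same route as the paper: necessity via Lemma \ref{main}, and sufficiency by explicitly computing $\spec_P(\Gp)$ for $p\in\{2,3,5\}$ from Theorems \ref{quc1} and \ref{quc2} together with Lemmas \ref{reg}, \ref{kkk} and \ref{kk}, then invoking Theorem \ref{ls} (equivalently Lemma \ref{m1}). The spectra you predict, $\{\pm1,\pm\tfrac{1}{\sqrt2},0\}$, $\{1,-\tfrac12,0\}$ and $\{1,0,-\tfrac14\pm\tfrac{\sqrt5}{4}\}$, match the paper's computations exactly.
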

	\begin{proof}
		The necessary part follows from Theorem \ref{main}. Note that the discriminant matrix $P$ of $\Gn$ is $\frac{1}{k}A$, where $k=|V_n|$ and $A$ is the adjacency matrix of $\Gn$. It is clear that  $\spec_P(\mathcal{G}_{\Zl_{2}})=\{\pm1\}$ and $\spec_P(\mathcal{G}_{\Zl_{4}})=\{\pm1,0\}$. We now use Theorems \ref{quc1} and \ref{quc2} to determine $\spec_P(\mathcal{G}_{\Zl_{2^t}})$ for $t\geq 3$. By Lemma \ref{lp}, $-1\notin Q_{2^t}$ for $t\geq 3$. Therefore by Theorem \ref{quc2}, for $a\in\Zl_{2^t}$, we have $$\ld_a=\chi_a(Q_{2^t})+\compconj{\chi_a(Q_{2^t})}=2\real(\chi_a(Q_{2^t})).$$ We obtain the value of $\chi_a(Q_{2^t})$ from Theorem \ref{quc1}. Thus, we have 
		$$\ld_a= \left\{ \begin{array}{ll}
			2^{t-2}\cos\left(\frac{a\pi}{2^{t-1}}\right) &\mbox{ if }	a\in\left\{2^{t-3}, 3\cdot 2^{t-3}, 5\cdot2^{t-3}, 7\cdot 2^{t-3} \right\} \\ 
			2^{t-2}\real\left((-\iu)^{\frac{a}{2^{t-2}}}\right) &\mbox{ if } a\in \left\{0, 2^{t-2}, 2^{t-1}, 3\cdot2^{t-2} \right\}\\ 
			0 &\textnormal{ otherwise.}
		\end{array}\right. $$
		Therefore, $\spec_A(\mathcal{G}_{\Zl_{2^t}})=\left\{\pm 2^{t-2},\pm \frac{2^{t-2}}{\sqrt{2}},0\right\}$. As $k=2|Q_{2^t}|=2^{t-2}$, we finally have $$\spec_P(\Ga)=\left\{\pm1, \pm \frac{1}{\sqrt{2}}, 0\right\}.$$
		
		Now we consider $p=3$. In this case also, $-1\notin Q_{3^t}$ for $t\geq 1$. Therefore, $\ld_a=2\real(\chi_a(Q_{3^t}))$ for $a\in \Zl_{3^t}$. Calculating the value of $\chi_a(Q_{3^t})$ from Theorem \ref{quc1} and Lemma \ref{kk}, we find that $\spec_A(\Gb)=\{2\cdot 3^{t-1},0,-3^{t-1}\}$. As $k=2\cdot3^{t-1}$, we have $$\spec_P(\Gb)=\left\{1,0,-\frac{1}{2}\right\}.$$
		
		Similarly, we have 
		$$\spec_P(\Gc)=\left\{1,~-\frac{1}{4}\pm \frac{\sqrt{5}}{4},~0\right\}.$$

		Thus by Theorem \ref{ls}, the quadratic unitary Cayley graph  $\Gp$ is periodic for $p\in\{2,3,5\}$.
	\end{proof}
	%%%%%%%%%%%%%%%%%%%%%%%%%%%%%%%%%%%%%%%%%%%%%%%%%%%%%%%%%%%%%%%%%%%%%%%%%%%%%%%%%%%%%%%%%%%%%%%%%
	\begin{lema}\label{2p}
		Let $p$ be a prime number such that $p\in\{3,5\}$ and $t$ is a positive integer. Then, the graphs $\mathcal{G}_{\Zl_{2\cdot p^t}}$ and $\mathcal{G}_{\Zl_{4\cdot p^t}}$ are periodic.
	\end{lema}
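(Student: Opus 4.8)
The plan is to invoke the spectral characterization of periodicity for regular graphs, Theorem \ref{ls}. Each quadratic unitary Cayley graph is regular, so Lemma \ref{reg} gives $P=\frac{1}{k}A$ with $k=|V_n|$ the regularity; hence $\spec_P(\Gn)=\frac{1}{k}\spec_A(\Gn)$, and it suffices to compute the adjacency spectrum of each of the four graphs $\mathcal{G}_{\Zl_{2\cdot 3^t}}$, $\mathcal{G}_{\Zl_{4\cdot 3^t}}$, $\mathcal{G}_{\Zl_{2\cdot 5^t}}$, $\mathcal{G}_{\Zl_{4\cdot 5^t}}$ and check that the scaled eigenvalues lie in the three allowed sets of Theorem \ref{ls}.

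First I would record, using Lemma \ref{lp}, the status of $-1$ in each prime-power factor: $-1\in Q_2$ and $-1\in Q_{5^t}$, while $-1\notin Q_4$ and $-1\notin Q_{3^t}$. This fixes the integer $r$ of Theorem \ref{quc2} and, with Lemma \ref{kkk}, the regularity in each case. For $n=2\cdot 5^t$ both factors satisfy $-1\in Q$, so $r=s$ (case (i)) and $k=|Q_2|\,|Q_{5^t}|=2\cdot 5^{t-1}$. For $n=2\cdot 3^t$ and $n=4\cdot 5^t$ exactly one factor satisfies $-1\in Q$, giving $r=1$ (case (ii)), while $n=4\cdot 3^t$ has neither, giving $r=0$ (case (ii)); in these three cases $Q_n\cap(-Q_n)=\emptyset$, so $k=2|Q_n|$, which equals $2\cdot 3^{t-1}$ for the two $p=3$ graphs and $4\cdot 5^{t-1}$ for $\mathcal{G}_{\Zl_{4\cdot 5^t}}$.

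Next I would substitute the character values of Theorem \ref{quc1} --- together with $G_3(1)=-\iu$ and $G_5(1)=1$ from Lemma \ref{kk} --- into the eigenvalue formula of Theorem \ref{quc2}. For the $p=3$ graphs the factor $\chi_{a}(Q_{3^t})$ is genuinely complex, so forming the bracket $z+\bar z=2\real(z)$ requires extracting both real and imaginary parts; the $Q_4$ factor $\chi_a(Q_4)\in\{1,-\iu,-1,\iu\}$ then rotates these, producing the extra eigenvalues $\pm 3^{t-1}\sqrt{3}$ present for $n=4\cdot 3^t$ but absent for $n=2\cdot 3^t$. Carrying this out and dividing by $k$ I expect
$$\spec_P(\mathcal{G}_{\Zl_{2\cdot 3^t}})=\left\{\pm1,\pm\tfrac12,0\right\},\quad \spec_P(\mathcal{G}_{\Zl_{4\cdot 3^t}})=\left\{\pm1,\pm\tfrac12,\pm\tfrac{\sqrt3}{2},0\right\},$$
$$\spec_P(\mathcal{G}_{\Zl_{2\cdot 5^t}})=\spec_P(\mathcal{G}_{\Zl_{4\cdot 5^t}})=\left\{\pm1,0,\pm\tfrac14\pm\tfrac{\sqrt5}{4}\right\}.$$
Each of these is contained in the allowed sets $\left\{\pm1,\pm\frac12,0\right\}$, $\left\{\pm\frac{\sqrt3}{2},\pm\frac14\pm\frac{\sqrt5}{4},\pm\frac{1}{\sqrt2}\right\}$, and $\Re$ of Theorem \ref{ls}, so each of the four graphs is periodic.

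The computations are elementary once arranged, so the main obstacle is the bookkeeping in Theorem \ref{quc2}: one must not conflate $-1\in Q_2$ with $-1\notin Q_4$, since this places the $2$-part of $n$ in different groups for $\mathcal{G}_{\Zl_{2\cdot p^t}}$ versus $\mathcal{G}_{\Zl_{4\cdot p^t}}$ and thereby changes both the eigenvalue formula and the regularity $k$. A secondary delicate point is computing $2\real(z)$ correctly for the complex product $z$ in the $p=3$ cases, since the difference between the real and imaginary parts of $\chi_a(Q_{3^t})$ is exactly what separates the $n=2\cdot 3^t$ spectrum from the $n=4\cdot 3^t$ spectrum.
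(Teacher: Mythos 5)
Your proposal is correct and takes essentially the same route as the paper: the paper's proof simply says ``proceeding as in the preceding lemma'' (i.e., computing the adjacency spectra via Theorems \ref{quc1} and \ref{quc2} and dividing by the regularity), records exactly the four discriminant spectra you obtained, and concludes by Theorem \ref{ls}. Your bookkeeping of $-1\in Q_2$ versus $-1\notin Q_4$, the resulting values of $r$ and $k$, and the final spectra all agree with the paper.
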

	\begin{proof}
		Proceeding as in the preceding lemma, we find
		\begin{align*}
			\spec_P(\mathcal{G}_{\Zl_{2\cdot 3^t}})&=\left\{\pm1,\pm\frac{1}{2},0\right\}, \\ \spec_P(\mathcal{G}_{\Zl_{4\cdot 3^t}})&=\left\{\pm1,\pm\frac{\sqrt3}{2},\pm\frac{1}{2},0\right\}~\text{and} \\
			\spec_P(\mathcal{G}_{\Zl_{2\cdot 5^t}})&=\left\{\pm1,~\pm \frac{1}{4}\pm \frac{\sqrt{5}}{4},~0\right\}=\spec_P(\mathcal{G}_{\Zl_{4\cdot 5^t}}).			
		\end{align*}
		Therefore by Theorem \ref{ls}, the graph $\Gn$ is periodic for $n\in\{2\cdot3^t,4\cdot3^t,2\cdot5^t,4\cdot5^t\}.$
	\end{proof}
	\begin{lema}\label{k12}
		Let $n=2^\alpha3^\beta$ for some positive integers $\alpha$ and $\beta$. Then, $\Gn$ is periodic.
	\end{lema}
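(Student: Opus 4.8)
The plan is to verify periodicity through Lemma~\ref{m1} directly, by showing that every discriminant eigenvalue of $\Gn$, for $n=2^\alpha3^\beta$, is the real part of a root of unity. I would deliberately not follow the route of Lemmas~\ref{k11} and~\ref{2p}, where one lists $\spec_P(\Gn)$ explicitly and applies Theorem~\ref{ls}: already for $\alpha\geq3$ the discriminant spectrum contains numbers such as $\cos\frac{11\pi}{12}$, which lie in $\overline{\Delta}$ and not in the finite lists of Theorem~\ref{ls}, so an explicit tabulation would be awkward. Membership in $\Re$ is all that is needed.

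First I would fix the arithmetic setup. Under the identification $\Zl_n=\Zl_{2^\alpha}\times\Zl_{3^\beta}$, Lemma~\ref{lp}(iii) gives $-1\notin Q_{3^\beta}$, hence $-1\notin Q_n$ and $Q_n\cap(-Q_n)=\emptyset$; so $\Gn$ is $k$-regular with $k=2|Q_n|=2|Q_{2^\alpha}|\,|Q_{3^\beta}|$, and $P=\frac1k A$ by Lemma~\ref{reg}. Writing $a=(a_1,a_2)$, I would then show that both the $r=1$ case ($\alpha=1$) and the $r=0$ case ($\alpha\geq2$) of Theorem~\ref{quc2} collapse into the single expression
$$\ld_a=2\real(w_a),\qquad w_a:=\chi_{a_1}(Q_{2^\alpha})\,\chi_{a_2}(Q_{3^\beta}).$$
This is immediate when $r=0$; when $\alpha=1$ it holds because $\chi_{a_1}(Q_2)=\cos(a_1\pi)$ is real, so $\chi_{a_1}(Q_2)\bigl[\chi_{a_2}(Q_{3^\beta})+\overline{\chi_{a_2}(Q_{3^\beta})}\bigr]=2\real(w_a)$.

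The core step is a modulus-and-phase reading of the two factors from Theorem~\ref{quc1}. I expect to check that $\chi_{a_1}(Q_{2^\alpha})$ is either $0$ or has modulus $|Q_{2^\alpha}|$ with unit part an $8$th root of unity (the first branch gives $e^{-a_1\pi\iu/2^{\alpha-1}}$; the second a power of $-\iu$; the small cases $\alpha=1,2$ give $\pm1$ and a power of $-\iu$), and, using Lemma~\ref{kk} with $G_3(1)=-\iu$, that $\chi_{a_2}(Q_{3^\beta})$ is either $0$ or has modulus $|Q_{3^\beta}|$ with unit part $\tfrac{-1\mp\sqrt3\,\iu}{2}=e^{\mp2\pi\iu/3}$, a cube root of unity. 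Consequently, whenever $w_a\neq0$ we have $|w_a|=|Q_{2^\alpha}|\,|Q_{3^\beta}|=k/2$, so that
$$\mu_a=\frac{\ld_a}{k}=\frac{\real(w_a)}{|w_a|}=\real\!\left(\frac{w_a}{|w_a|}\right),$$
and $w_a/|w_a|$, being a product of an $8$th and a cube root of unity, is a $24$th root of unity; when $w_a=0$ one gets $\mu_a=0$. Either way $\mu_a\in\Re$, whence $\spec_P(\Gn)\subset\Re$ and Lemma~\ref{m1} concludes periodicity.

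The only delicate point, and the part I would write most carefully, is the bookkeeping: merging the $r=0$ and $r=1$ branches of Theorem~\ref{quc2} into one formula for $\ld_a$, and handling the three sub-formulas for $\chi_{a_1}(Q_{2^\alpha})$ (according to $\alpha=1$, $\alpha=2$, $\alpha\geq3$) uniformly enough to extract the common conclusion that each factor is $0$ or $|Q_{p^t}|$ times a root of unity. Once that is in place the remaining computation is short.
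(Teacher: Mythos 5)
Your proposal is correct, and it proves the statement by the same basic mechanism as the paper (compute the discriminant eigenvalues from Theorems \ref{quc1} and \ref{quc2} and verify $\spec_P(\Gn)\subset\Re$ via Lemma \ref{m1}), but the verification itself is organized quite differently. The paper splits off $\alpha\leq2$ by citing Lemma \ref{2p}, and for $\alpha\geq3$ runs a four-case analysis on $(a_1,a_2)$, in each case massaging $\ld_a$ into an explicit value $\cos\bigl(\tfrac{2^\alpha\pm3a_1}{3\cdot2^{\alpha-1}}\pi\bigr)$ or into the finite list $\bigl\{\pm1,\pm\tfrac12,\pm\tfrac{\sqrt3}{2},\pm\tfrac{1}{\sqrt2},0\bigr\}$; your single structural observation --- each factor $\chi_{a_i}(Q_{p^{t}})$ is either $0$ or $|Q_{p^{t}}|$ times an $8$th (resp.\ cube) root of unity, so every nonzero $w_a$ has modulus $k/2$ and unit part a $24$th root of unity, whence $\mu_a=\real(w_a/|w_a|)\in\Re$ --- replaces all of that and handles $\alpha=1,2$ uniformly as well. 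The bookkeeping you flag does check out: $-1\in Q_2$ but $-1\notin Q_{2^\alpha}$ for $\alpha\geq2$ and $-1\notin Q_{3^\beta}$, so $r\in\{0,1\}$ and the reality of $\chi_{a_1}(Q_2)$ indeed collapses both branches of Theorem \ref{quc2} to $\ld_a=2\real(w_a)$ with $k=2|Q_{2^\alpha}||Q_{3^\beta}|$. What the paper's version buys is an explicit list of the angles (reused later to compute the period $24$ in Theorem \ref{gnp}); what yours buys is brevity, uniformity over all $\alpha\geq1$, and the fact that the $24$th-root structure makes the eventual period transparent. Your side remark about why Theorem \ref{ls} is unusable here (eigenvalues such as $\cos\tfrac{11\pi}{12}$ lie in $\overline{\Delta}$, outside its finite lists) is accurate and is implicitly why the paper also falls back on Lemma \ref{m1} for this lemma.
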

	\begin{proof}
		If $\alpha\leq 2$, then by Lemma \ref{2p}, the graph $\Gn$ is periodic. Now let $\alpha\geq 3$. The degree of the graph $\Gn$ is $2|Q_{2^\alpha}||Q_{3^\beta}|.$ By Lemma \ref{kkk}, we have $2|Q_{2^\alpha}||Q_{3^\beta}|=2^{\alpha -2}3^{\beta-1}$. We write
		$\Zl_n=\Zl_{2^\alpha}\times \Zl_{3^\beta}.$ Let $a=(a_1,a_2)\in\Zl_n$, where $a_1\in \Zl_{2^\alpha}$ and $a_2\in\Zl_{3^\beta}$. Then, adjacency eigenvalues of $\Gn$ are given by $$\ld_a=\left[\chi_{a_1}\left(Q_{2^\alpha}\right) \chi_{a_2}\left(Q_{3^\beta}\right)+\compconj{\chi_{a_1}\left(Q_{2^\alpha}\right)\chi_{a_2}\left(Q_{3^\beta}\right)}\right].$$
		Now we consider a few cases depending on the values of $a_1$ and $a_2$.
		%%%%%%%%%%%%%%%%%%%%%%%%%%%%%%%%%%%%%%%%%%%%%%%%%%%%%%%%%%%%%%%%%%%%%%%%%%%%%%%%%%%%%%%%%%%%%%%%%%
		
		\noindent\textbf{Case 1.} $a_1\in \left\{2^{\alpha-3}, 3\cdot 2^{\alpha-3}, 5\cdot2^{\alpha-3}, 7\cdot 2^{\alpha-3} \right\}$ and $a_2\in 3^{\beta -1}\Zl_3\setminus\{0\}$. If $a_2=3^{\beta -1}$, then by Theorem~\ref{quc1}, $$\ld_a=-2^{\alpha-3}3^{\beta -1}\left(\cos\frac{a_1\pi}{2^{\alpha-1}}+\sqrt{3}\sin\frac{a_1\pi}{2^{\alpha-1}}\right).$$ Then, the corresponding eigenvalue of $P$ is given by \begin{align*}
			\mu_a=\frac{\ld_a}{k}=-\frac{1}{2}\cos\frac{a_1\pi}{2^{\alpha-1}}-\frac{\sqrt{3}}{2}\sin\frac{a_1\pi}{2^{\alpha-1}}=\cos{\left(\frac{2^\alpha+3a_1}{3\cdot2^{\alpha-1}}\pi\right)}.
		\end{align*}	
		If $a_2=2\cdot3^{\beta-1}$, then $$\ld_a=2^{\alpha-3}3^{\beta -1}\left(-\cos\frac{a_1\pi}{2^{\alpha-1}}+\sqrt{3}\sin\frac{a_1\pi}{2^{\alpha-1}}\right).$$ Then, the corresponding eigenvalue of $P$ is given by \begin{align*}
			\mu_a&=\cos{\left(\frac{2^\alpha-3a_1}{3\cdot2^{\alpha-1}}\pi\right)}.
		\end{align*} 
		%%%%%%%%%%%%%%%%%%%%%%%%%%%%%%%%%%%%%%%%%%%%%%%%%%%%%%%%%%%%%%%%%%%%%%%%%%%%%%%%%%%%%%%%%%%%%%%
		
		\noindent\textbf{Case 2.} $a_1\in \left\{0, 2^{\alpha-2}, 2^{\alpha-1}, 3\cdot2^{\alpha-2} \right\}$ and $a_2\in3^{\beta-1}\Zl_3$.  If $a_2=3^{\beta-1}$, then $$\ld_a=2^{\alpha -4}3^{\beta-1}\left[(-\iu)^{\frac{a_1}{2^{\alpha-2}}}(-1-\iu\sqrt{3})+(\iu)^{\frac{a_1}{2^{\alpha-2}}}(-1+\iu\sqrt{3})\right].$$ Therefore, the eigenvalues of $P$ are $\pm\frac{1}{2}, \pm\frac{\sqrt{3}}{2}$.
		
		Similarly, if $a_2=2\cdot 3^{\beta-1}$, then also the eigenvalues of $P$ are $\pm\frac{1}{2}, \pm\frac{\sqrt{3}}{2}$.
		
		\noindent\textbf{Case 3.} $a_1\in 2^{\alpha -3}\Zl_{2^\alpha}$ and $a_2=0$. In this case, we find that the eigenvalues of $P$ are $\pm1, \pm\frac{1}{\sqrt{2}}$ and $0$.
		
		\noindent\textbf{Case 4.} Either $a_1\notin 2^{\alpha -3}\Zl_{2^\alpha}$ and $a_2\in 3^{\beta -1}\Zl_3$ or $a_1\in \Zl_{2^\alpha}$ and $a_2\notin 3^{\beta -1}\Zl_3$. In this case, it is clear that $\ld_a=0$, and hence $\mu_a=0$.
		
		In all the cases, we see that $\spec_P(\Gn)\subset\Re$. Therefore by Lemma \ref{m1}, the result follows.	
	\end{proof}
	\begin{lema}
		Let $n=2^\alpha5^\gamma$, where $\alpha$ and $\gamma$ be two positive integers. Then $\Gn$ is periodic if and only if $\alpha\in \{1,2\}$.
	\end{lema}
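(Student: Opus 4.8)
The plan is to dispose of the two directions separately. The cases $\alpha=1$ and $\alpha=2$ are exactly $n=2\cdot 5^\gamma$ and $n=4\cdot 5^\gamma$, and both of these are already known to be periodic from Lemma \ref{2p} (taking $p=5$, $t=\gamma$); so the ``if'' direction requires nothing new. The whole content of the lemma is therefore the ``only if'' direction: I must show that $\Gn$ fails to be periodic whenever $\alpha\geq 3$. By Lemma \ref{m1} it suffices to exhibit a single discriminant eigenvalue of $\Gn$ that is not the real part of a root of unity.

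To build such an eigenvalue I would fix $\alpha\geq 3$, $\gamma\geq 1$ and use $\Zl_n=\Zl_{2^\alpha}\times\Zl_{5^\gamma}$. Since $5\equiv 1\Mod 4$ we have $-1\in Q_{5^\gamma}$, while $-1\notin Q_{2^\alpha}$ for $\alpha\geq 2$ by Lemma \ref{lp}, so we are in the $r<s$ case of Theorem \ref{quc2} with the $5$-part carrying the units $-1$. By Lemma \ref{kkk} the regularity is $k=2\,|Q_{2^\alpha}|\,|Q_{5^\gamma}|=2^{\alpha-1}5^{\gamma-1}$, and for $a=(a_1,a_2)\in\Zl_{5^\gamma}\times\Zl_{2^\alpha}$ Theorem \ref{quc2} gives
\[
\lambda_a=\chi_{a_1}(Q_{5^\gamma})\left[\chi_{a_2}(Q_{2^\alpha})+\compconj{\chi_{a_2}(Q_{2^\alpha})}\right].
\]
The decisive choice is to make both factors irrational: take $a_2=2^{\alpha-3}$, so Theorem \ref{quc1}(i) yields $\chi_{a_2}(Q_{2^\alpha})+\compconj{\chi_{a_2}(Q_{2^\alpha})}=2^{\alpha-2}\cos(\pi/4)=2^{\alpha-2}/\sqrt2$, and take $a_1\in 5^{\gamma-1}\Zl_5\setminus\{0\}$ with $\left(\tfrac{a_1}{5^{\gamma-1}}\big/5\right)=1$, so that Theorem \ref{quc1}(ii) together with Lemma \ref{kk} ($G_5(1)=1$) gives $\chi_{a_1}(Q_{5^\gamma})=\tfrac12 5^{\gamma-1}(\sqrt5-1)$. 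Dividing $\lambda_a$ by $k$ produces the discriminant eigenvalue
\[
\mu_a=\frac{\sqrt5-1}{4\sqrt2}=\frac{\sqrt{10}-\sqrt2}{8}.
\]

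It remains to verify $\mu_a\notin\Re$. Its monic minimal polynomial over $\Ql$ is $f(x)=x^4-\tfrac38 x^2+\tfrac{1}{64}$, whose four roots are precisely $\pm\tfrac{\sqrt{10}\pm\sqrt2}{8}$; letting $a_1$ range over the two Legendre classes and $a_2$ over $\{2^{\alpha-3},3\cdot 2^{\alpha-3}\}$ realizes all four of these numbers as discriminant eigenvalues of $\Gn$. Hence, were $\Gn$ periodic, Lemma \ref{m1} would place every root of $f$ in $\Re$, and Lemma \ref{m2} would force
\[
(2x)^4 f\!\left(\tfrac{x+x^{-1}}{2}\right)=x^8+\tfrac52 x^6+\tfrac{13}{4}x^4+\tfrac52 x^2+1
\]
to be a product of cyclotomic polynomials and so to lie in $\Zl[x]$, which it manifestly does not. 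This contradiction establishes non-periodicity for $\alpha\geq 3$. The main obstacle here is not the final polynomial computation but the engineering of the right eigenvalue: the $2$-part in isolation contributes only the admissible value $1/\sqrt2$ and the $5$-part only the admissible values $\pm\tfrac14\pm\tfrac{\sqrt5}{4}$, so one is forced to multiply a $\sqrt2$-type factor by a $\sqrt5$-type factor to escape the fields $\Ql(\sqrt2)$ and $\Ql(\sqrt5)$ and land in the genuinely quartic number $\tfrac{\sqrt{10}-\sqrt2}{8}\in\overline{\Delta}$, whose non-integral transformed polynomial is exactly what certifies that it is not the real part of any root of unity.
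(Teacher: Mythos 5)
Your proposal is correct and follows essentially the same route as the paper: both reduce the ``if'' direction to Lemma \ref{2p}, and for $\alpha\geq 3$ both select the same four discriminant eigenvalues $\pm\frac{\sqrt{10}\pm\sqrt{2}}{8}=\pm\frac{1}{\sqrt{2}(1\pm\sqrt{5})}$, form the same rational quartic, and apply Lemma \ref{m2} to the same degree-$8$ transformed polynomial $(x^2+1)^4-\tfrac{3}{2}x^2(x^2+1)^2+\tfrac{1}{4}x^4\notin\Zl[x]$. No substantive difference.
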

	\begin{proof}
		Let $\alpha\geq 3$. In this case, the regularity $k$ of the graph is given by $k=2|Q_{2^\alpha}||Q_{5^\gamma}|=2^{\alpha-1}5^{\gamma-1}$.
		
		Then by Theorem \ref{quc2}, the eigenvalue $\mu_a$ of the discriminant $P$ is given by 
		\begin{equation}\label{lm47}
			\mu_a=\frac{1}{k}\chi_{a_1}(Q_{5^\gamma})\left[\chi_{a_2}(Q_{2^\alpha}) +\compconj{\chi_{a_2}(Q_{2^\alpha}) }\right],
		\end{equation}
		where $a=(a_1,a_2)\in\Zl_{5^\gamma}\times \Zl_{2^\alpha}$.
		
		Now consider  $b_1=(5^{\gamma-1},2^{\alpha-3})$, $b_2=(2\cdot5^{\gamma-1},2^{\alpha-3})$, $b_3=(5^{\gamma-1},3\cdot2^{\alpha-3})$ and $b_4=(2\cdot5^{\gamma-1},3\cdot2^{\alpha-3})$ in $\Zl_{5^\gamma}\times \Zl_{2^\alpha}$.		
		From \eqref{lm47}, we have 
		$$\mu_{b_1}=\frac{1}{\sqrt{2}(1+\sqrt{5})},~
		\mu_{b_2}=\frac{1}{\sqrt{2}(1-\sqrt{5})},~
		\mu_{b_3}=-\frac{1}{\sqrt{2}(1+\sqrt{5})}~\text{and}~
		\mu_{b_4}=-\frac{1}{\sqrt{2}(1-\sqrt{5})}.$$
		If $\Gn$ is periodic, then $\mu_{b_1}$, $\mu_{b_2}$, $\mu_{b_3}$ and $\mu_{b_4}$ must be the real parts of some roots of unity. Now, consider the polynomial $f(x)=(x-\mu_{b_1})(x-\mu_{b_2})(x-\mu_{b_3})(x-\mu_{b_4})$. Observe that $f(x)$ is a monic polynomial in $\Ql[x]$. Then by Lemma \ref{m2}, $(2x)^4f\left(\frac{x+x^{-1}}{2}\right)$ must be in $\Zl[x]$. However, $$(2x)^4f\left(\frac{x+x^{-1}}{2}\right)=(x^2+1)^4-\frac{3}{2}x^2(x^2+1)^2+\frac{1}{4}x^4\notin\Zl[x],$$ a contradiction. Therefore for $\alpha\geq 3$, the graph $\Gn$ is not periodic. Hence by Lemma \ref{2p}, the result follows.	
	\end{proof}
	\begin{lema}\label{lastp}
		Let $n=2^\alpha3^\beta 5^\gamma$, where $\alpha$, $\beta$ and $\gamma$ are nonnegative integers such that $\beta\geq1$ and $\gamma\geq 1$. Then, $\Gn$ is not periodic. 
	\end{lema}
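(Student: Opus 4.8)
The plan is to apply Lemma \ref{m1}: since $\Gn$ is a Cayley graph and hence regular, it suffices to produce a single eigenvalue of the discriminant $P$ that is not the real part of any root of unity, i.e. to show $\spec_P(\Gn)\not\subset\Re$. The guiding observation is that $3\equiv 3\pmod 4$ and $5\equiv 1\pmod 4$, so by Lemma \ref{lp} we have $-1\notin Q_{3^\beta}$ while $-1\in Q_{5^\gamma}$; thus in the splitting of Theorem \ref{quc2} the primes $3$ and $5$ always land in opposite blocks. I would produce an eigenvalue that mixes the $\sqrt5$-behaviour coming from the $5$-part with the $-\tfrac12$-type real part coming from the $3$-part, obtaining an irrationality of the form $\tfrac{1\pm\sqrt5}{8}$ that is too small to be a real part of a root of unity.

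Concretely, I would identify $\Zl_n=\Zl_{2^\alpha}\times\Zl_{3^\beta}\times\Zl_{5^\gamma}$ and test the vector $a=(0,a_2,a_1)$ whose $2^\alpha$-component is trivial (for $\alpha=0$ the $2$-part is simply absent). The point of taking the $2$-component to be $0$ is that $\chi_0(Q_{2^\alpha})=|Q_{2^\alpha}|$ is a positive real scalar, which factors out of the formula in Theorem \ref{quc2} and cancels against the factor $|Q_{2^\alpha}|$ in the regularity $k=2|Q_{2^\alpha}||Q_{3^\beta}||Q_{5^\gamma}|$. Choosing $a_1\in 5^{\gamma-1}\Zl_5\setminus\{0\}$ with $\left(\tfrac{a_1}{5^{\gamma-1}}\big/5\right)=\pm1$ yields, using $G_5(1)=1$ from Lemma \ref{kk} and Theorem \ref{quc1}, that $\chi_{a_1}(Q_{5^\gamma})=\tfrac12 5^{\gamma-1}(\pm\sqrt5-1)$, while choosing $a_2\in 3^{\beta-1}\Zl_3\setminus\{0\}$ gives $\real\big(\chi_{a_2}(Q_{3^\beta})\big)=-\tfrac12 3^{\beta-1}$. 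After dividing the adjacency eigenvalue by $k$, all prime-power sizes cancel and I expect to land on the discriminant eigenvalues $\mu=\tfrac{1\mp\sqrt5}{8}$.

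Finally I would certify that $\tfrac{1\pm\sqrt5}{8}\notin\Re$. The cleanest route is Lemma \ref{m2}: setting $f(x)=\big(x-\tfrac{1-\sqrt5}{8}\big)\big(x-\tfrac{1+\sqrt5}{8}\big)=x^2-\tfrac14 x-\tfrac{1}{16}\in\Ql[x]$, one computes
$$(2x)^2 f\!\left(\tfrac{x+x^{-1}}{2}\right)=x^4-\tfrac12 x^3+\tfrac74 x^2-\tfrac12 x+1\notin\Zl[x],$$
so this polynomial cannot be a product of cyclotomic polynomials and hence the roots of $f$ are not real parts of roots of unity; equivalently, since $\tfrac{1-\sqrt5}{8}\in\spec_P^\Delta(\Gn)$ but $\tfrac{1-\sqrt5}{8}\notin\{\pm\tfrac{\sqrt3}{2},\pm\tfrac14\pm\tfrac{\sqrt5}{4},\pm\tfrac1{\sqrt2}\}$, periodicity already fails by Theorem \ref{ls}. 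Either way $\spec_P(\Gn)\not\subset\Re$, so $\Gn$ is not periodic by Lemma \ref{m1}. The only place demanding care is the eigenvalue computation itself: the position of the prime $2$ in the $r/s$ partition of Theorem \ref{quc2} changes as $\alpha$ passes $0,1,$ and $\geq 2$, so I would verify that forcing the $2$-component to vanish really does give $\mu=\tfrac{1\pm\sqrt5}{8}$ in all three regimes; once the trivial component reduces every case to the same real scalar $|Q_{2^\alpha}|$, the remaining computation is routine.
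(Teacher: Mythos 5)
Your proposal is correct and follows essentially the same route as the paper: both identify $\Zl_n$ with the product of its prime-power components, evaluate the discriminant eigenvalue at a vector with trivial $2$-component and components $3^{\beta-1}$ and $5^{\gamma-1}$ in the other factors to obtain $\mu=\tfrac{1\pm\sqrt5}{8}$, and then conclude non-periodicity because this value is excluded by Theorem \ref{ls} (the paper invokes Theorem \ref{ls} directly rather than redoing the cyclotomic-polynomial computation, but that is the same criterion). Your extra check that the placement of the prime $2$ in the $r/s$ partition of Theorem \ref{quc2} is immaterial once the $2$-component is set to $0$ is exactly the point the paper handles by treating $\alpha\geq1$ and $\alpha=0$ in one line each.
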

	\begin{proof}
		First, assume that $\alpha\geq1$. The eigenvalue $\mu_a$ of the discriminant $P$ of $\Gn$ is given by 
		\begin{equation}\label{lm48}
			\mu_a=\frac{1}{k}\chi_{a_1}(Q_{5^\gamma})\left[\chi_{a_2}(Q_{2^\alpha})\chi_{a_3}(Q_{3^\gamma}) +\compconj{\chi_{a_2}(Q_{2^\alpha})\chi_{a_3}(Q_{3^\gamma}) }\right],
		\end{equation}
		where $k=2|Q_{2^\alpha}||Q_{3^\beta}||Q_{5^\gamma}|$ and $a=(a_1,a_2,a_3)\in\Zl_{5^\gamma}\times \Zl_{2^\alpha}\times\Zl_{3^\beta}.$
		Thus for $a=(5^{\gamma-1}, 0,3^{\beta-1})$, we have $\mu_{a}=\frac{1}{8}+\frac{\sqrt{5}}{8}$.  Similarly, for $\alpha=0$ and $a=(5^{\gamma-1},3^{\beta-1})\in\Zl_{5^\gamma}\times\Zl_{3^\beta}$, we have $\mu_{a}=\frac{1}{8}+\frac{\sqrt{5}}{8}$. Therefore by Theorem \ref{ls}, the graph $\Gn$ is not periodic.
	\end{proof}
	The following theorem is obtained by combining the preceding lemmas.
	\begin{theorem}\label{main1}
		The quadratic unitary Cayley graph $\Gn$ is periodic if and only if  $n$ is either $2^\alpha3^\beta$ or  $2^\delta5^\gamma$ for some nonnegative integers $\alpha$, $\beta$, $\gamma$ and $\delta$ such that $\alpha+\beta\neq 0$,  $\gamma\geq1$ and $0\leq\delta\leq2$. 
	\end{theorem}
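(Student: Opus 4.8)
The plan is to read Theorem \ref{main1} as a synthesis of the preceding lemmas, organized entirely by the prime factorization of $n$. The starting point is Lemma \ref{main}: if $\Gn$ is periodic then every prime divisor of $n$ lies in $\{2,3,5\}$. So I would write $n=2^\alpha3^\beta5^\gamma$ at the outset and reduce the whole statement to a short case analysis governed by which of $\beta$ and $\gamma$ vanish, invoking the appropriate lemma in each case for the two directions of the equivalence.

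For the necessity direction, suppose $\Gn$ is periodic. Lemma \ref{main} gives $n=2^\alpha3^\beta5^\gamma$. If both $\beta\geq1$ and $\gamma\geq1$, then $15\mid n$ and Lemma \ref{lastp} shows that $\Gn$ is not periodic, a contradiction; hence at least one of $\beta,\gamma$ is zero. If $\gamma=0$ we land in the family $n=2^\alpha3^\beta$, and the degenerate case $n=1$ is excluded precisely by the hypothesis $\alpha+\beta\neq0$. If instead $\beta=0$ and $\gamma\geq1$, then $n=2^\alpha5^\gamma$; here the lemma characterizing periodicity of $\mathcal{G}_{\Zl_{2^\alpha5^\gamma}}$ (for $\alpha\geq1$) shows periodicity fails once $\alpha\geq3$, forcing $\alpha\in\{0,1,2\}$, which is exactly the constraint $0\leq\delta\leq2$.

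For sufficiency I would, for each admissible $n$, name the lemma that certifies periodicity. When $n=2^\alpha3^\beta$ with $\alpha+\beta\neq0$: if one exponent is zero this is a prime power $2^\alpha$ or $3^\beta$ covered by Lemma \ref{k11}, and if both exponents are positive it is covered by Lemma \ref{k12}. When $n=2^\delta5^\gamma$ with $\gamma\geq1$: the case $\delta=0$ is the prime power $5^\gamma$ from Lemma \ref{k11}, while $\delta\in\{1,2\}$ gives $2\cdot5^\gamma$ or $4\cdot5^\gamma$, both periodic by Lemma \ref{2p}. Collecting the two directions yields the stated characterization.

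Since all the computational content already resides in the lemmas, the theorem itself is essentially bookkeeping, and the genuine difficulties were dispatched earlier. The hardest single ingredient is Lemma \ref{main}, whose proof combines the spectral criterion of Theorem \ref{ls} with the explicit Gaussian-sum eigenvalues of Theorem \ref{quc2} to eliminate every prime outside $\{2,3,5\}$. The two remaining genuine exclusions---ruling out simultaneous factors $3$ and $5$ via a discriminant eigenvalue $\tfrac{1}{8}+\tfrac{\sqrt5}{8}\notin\Re$ in Lemma \ref{lastp}, and ruling out a factor $2^\alpha$ with $\alpha\geq3$ beside a factor of $5$---are the only places where periodicity actually breaks, so the main care in assembling the theorem is to partition the exponents so that exactly these boundary cases are correctly attributed.
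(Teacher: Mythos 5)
Your proposal is correct and matches the paper exactly: the paper's own ``proof'' of Theorem \ref{main1} is the single sentence that it is obtained by combining the preceding lemmas, and your case analysis (Lemma \ref{main} plus Lemma \ref{lastp} and the $2^\alpha 5^\gamma$ lemma for necessity; Lemmas \ref{k11}, \ref{2p}, \ref{k12} for sufficiency) is precisely that combination, carried out explicitly.
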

	Recall that  $\spec_A(\mathcal{G}_{\Zl_{3^\beta}})=\{2\cdot 3^{\beta-1},0,-3^{\beta-1}\}$ and $\spec_A(\mathcal{G}_{\Zl_{2^\alpha}})=\left\{\pm 2^{\alpha-2},\pm \frac{2^{\alpha-2}}{\sqrt{2}},0\right\}$ for $\alpha \geq 3$. Thus, $\mathcal{G}_{\Zl_{3^\beta}}$ is integral while $\mathcal{G}_{\Zl_{2^\alpha}}$, for $\alpha\geq 3$, is nonintegral. However, both $\mathcal{G}_{\Zl_{3^\beta}}$ and $\mathcal{G}_{\Zl_{2^\alpha}}$, for $\alpha \geq 3$, are periodic. Thus, we find infinitely many integral as well as nonintegral graphs that are periodic.
	
	In Lemmas \ref{k11}, \ref{2p} and \ref{k12}, we explicitly determined the eigenvalues of $P$ for the periodic graph $\Gn$. By Lemma \ref{evu} and Corollary \ref{periodic}, one can determine the period of $\Gn$. For example, we show the calculation details to obtain the period of $\Gn$ for $n=4\cdot 5^t~(t\geq1)$. By Lemma \ref{2p}, we have
	$$\spec_P(\Gn)=\left\{\pm1,~\pm \frac{1}{4}\pm \frac{\sqrt{5}}{4},~0\right\}.$$
	Consider the polynomial $f(x)=x^2-\frac{x}{2}-\frac{1}{4}$ having roots $ \frac{1}{4}+\frac{\sqrt{5}}{4}$ and $ \frac{1}{4}-\frac{\sqrt{5}}{4}$.  Let $w=e^{\iu\frac{\pi}{5}}$ and $z_1=\frac{w+w^{-1}}{2}$. Observe that $z_1>0$ and $f(z_1)=0$. Therefore, $z_1= \frac{1}{4}+\frac{\sqrt{5}}{4}$, that is, $\cos \frac{\pi}{5}=\frac{1}{4}+\frac{\sqrt{5}}{4}$.  Now, considering $w=e^{\iu\frac{3\pi}{5}}$, we find  $\cos \frac{3\pi}{5}=\frac{1}{4}-\frac{\sqrt{5}}{4}$.
	
	In a similar manner, considering the polynomial  $f(x)=x^2+\frac{x}{2}-\frac{1}{4}$, we find that $\cos \frac{2\pi}{5}=-\frac{1}{4}+\frac{\sqrt{5}}{4}$ and $\cos \frac{4\pi}{5}=-\frac{1}{4}-\frac{\sqrt{5}}{4}$. Thus, we have
	\begin{equation*}\label{spp1}
		\spec_P(\Gn)=\left\{\cos 0,~ \cos \pi, ~\cos\frac{\pi}{5},~ \cos\frac{3\pi}{5},~ \cos\frac{2\pi}{5},~ \cos\frac{4\pi}{5}, ~\cos\frac{\pi}{2} \right\}.\end{equation*}
	Therefore by Lemma \ref{evu}, 
	$$\spec_U(\Gn)=\left\{\pm 1,~e^{\pm\iu\frac{\pi}{5}},~e^{\pm\iu\frac{3\pi}{5}},~e^{\pm\iu\frac{2\pi}{5}},~e^{\pm\iu\frac{4\pi}{5}},~e^{\pm\iu\frac{\pi}{2}}\right\}.$$
	Hence by Corollary \ref{periodic}, the period of $\Gn$ is $20$ for $n=4\cdot 5^t$. The calculations for the other cases are omitted. The next theorem provides the period of $\Gn$. For nonperiodic graphs, we denote the period as $\infty$. 
	\begin{theorem}\label{gnp}
		The period $\tau$ of the quadratic unitary Cayley graph $\Gn$ is given by 	$$\tau= \left\{ \begin{array}{cl}
			2 &\mbox{ if }	n=2 \\ 
			4 &\mbox{ if }	n=4 \\ 
			8 &\mbox{ if }	n=2^\alpha~(\alpha\geq3) \\ 
			12 &\mbox{ if }	n\in\{3^\beta,2\cdot3^\beta,4\cdot3^\beta\} \\ 
			20 &\mbox{ if }	n\in\{5^\gamma,2\cdot5^\gamma,4\cdot5^\gamma\} \\ 
			24 &\mbox{ if }	n=2^\alpha3^\beta~ (\alpha\geq 3) \\ 
			\infty &\textnormal{ otherwise,}
		\end{array}\right.$$ 
		where $\alpha,~\beta$ and $\gamma$ are positive integers.
	\end{theorem}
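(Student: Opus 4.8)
The plan is to turn the computation of $\tau$ into a finite lcm calculation driven entirely by the discriminant spectra already obtained. First I would dispose of the non-periodic case: by Theorem \ref{main1}, $\Gn$ fails to be periodic for every $n$ outside the families $2^\alpha 3^\beta$ (with $\alpha+\beta\neq 0$) and $2^\delta 5^\gamma$ (with $\gamma\geq 1$, $0\leq\delta\leq 2$), so for all such $n$ there is no finite $\tau$ and we set $\tau=\infty$, matching the last line of the table. For every remaining $n$ the graph is periodic, and the task reduces to: take $\spec_P(\Gn)$ as computed in Lemmas \ref{k11}, \ref{2p} and \ref{k12}; pass to $\spec_U(\Gn)$ through the spectral mapping theorem (Lemma \ref{evu}); and, by Corollary \ref{periodic}, output $\tau=\lcm(k_1,\dots,k_t)$, where $k_j$ is the multiplicative order of the $j$-th eigenvalue of $U$.

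The heart of the proof is a ``cosine dictionary'' that assigns to each discriminant eigenvalue $\mu\in[-1,1]$ the order of the roots of unity $e^{\pm\iu\arccos\mu}$ it produces under Lemma \ref{evu}. For the values that actually occur I would record: $1=\cos 0$ has order $1$; $-1=\cos\pi$ order $2$; $0=\cos\frac{\pi}{2}$ order $4$; $\tfrac12=\cos\frac{\pi}{3}$ order $6$; $-\tfrac12=\cos\frac{2\pi}{3}$ order $3$; $\pm\frac{1}{\sqrt 2}=\cos\frac{\pi}{4},\cos\frac{3\pi}{4}$ order $8$; $\pm\frac{\sqrt 3}{2}=\cos\frac{\pi}{6},\cos\frac{5\pi}{6}$ order $12$; and $-\tfrac14\pm\frac{\sqrt 5}{4}=\cos\frac{2\pi}{5},\cos\frac{4\pi}{5}$ order $5$, just as the $n=4\cdot 5^t$ computation preceding the theorem establishes the analogous identities $\cos\frac{\pi}{5}=\frac14+\frac{\sqrt5}{4}$ and $\cos\frac{3\pi}{5}=\frac14-\frac{\sqrt5}{4}$. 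I would also note that Lemma \ref{evu} always adjoins $+1$ (through $b_1=|E(\Gn)|-|V(\Gn)|+1\geq 1$) and, for all the denser graphs, $-1$ as well; since these only contribute orders $1$ and $2$, they never enlarge the lcm beyond what the surd eigenvalues already force.

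With the dictionary fixed, each family collapses to a one-line lcm. The spectra $\{\pm1\}$ and $\{\pm1,0\}$ give $\tau=\lcm(1,2)=2$ for $n=2$ and $\tau=\lcm(1,2,4)=4$ for $n=4$; the spectrum $\{\pm1,\pm\frac{1}{\sqrt2},0\}$ gives $\tau=\lcm(1,2,8,4)=8$ for $n=2^\alpha$ with $\alpha\geq 3$. For $n\in\{3^\beta,2\cdot3^\beta,4\cdot3^\beta\}$ the eigenvalues realise orders among $\{1,2,3,4,6,12\}$, so $\tau=12$; for $n\in\{5^\gamma,2\cdot5^\gamma,4\cdot5^\gamma\}$ they realise orders among $\{1,2,4,5\}$, so $\tau=20$; and for $n=2^\alpha3^\beta$ with $\alpha\geq 3$ the full spectrum $\{\pm1,\pm\frac12,\pm\frac{\sqrt3}{2},\pm\frac{1}{\sqrt2},0\}$ combines the orders $8$ and $12$ into $\tau=\lcm(8,12)=24$. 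Collecting these values reproduces the piecewise formula verbatim.

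The main obstacle is not the arithmetic but the exact identification of the irrational eigenvalues with real parts of specific primitive roots of unity, together with the certainty that no eigenvalue carries a larger hidden order that would inflate the lcm. Rather than leaning on cosine tables, the clean way to pin down each order is the machinery of Lemma \ref{m2}: for the minimal rational polynomial $f$ annihilating a given eigenvalue, factor $(2x)^{\deg f}f\!\left(\frac{x+x^{-1}}{2}\right)$ into cyclotomic polynomials $\Phi_m$, and read off the orders as the indices $m$. A delicate secondary point, which I would check by hand before declaring the period, is the smallest member of each family: there the generic discriminant spectrum can degenerate (for instance the eigenvalue $0$ may have multiplicity zero, as happens when $\Gn$ reduces to a short cycle), so the base cases must be confirmed directly from $\spec_P(\Gn)$ to ensure the stated period holds across the entire family.
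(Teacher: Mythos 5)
Your strategy is the paper's own: the paper likewise just quotes $\spec_P(\Gn)$ from Lemmas \ref{k11}, \ref{2p} and \ref{k12}, passes to $\spec_U(\Gn)$ via Lemma \ref{evu}, and applies Corollary \ref{periodic} to get $\tau$ as an lcm of orders, working only the family $n=4\cdot 5^t$ in detail and omitting the remaining cases. Your ``cosine dictionary'' is the right bookkeeping device, with two small slips that happen not to affect any lcm: it omits $\tfrac14\pm\tfrac{\sqrt5}{4}=\cos\tfrac{\pi}{5},\cos\tfrac{3\pi}{5}$ (order $10$, not $5$), and for $n=2^\alpha 3^\beta$ with $\alpha\geq3$ the spectrum from Case~1 of Lemma \ref{k12} also contains $\cos\tfrac{5\pi}{12},\cos\tfrac{11\pi}{12}$, etc., of order $24$ (still $\lcm(8,12)=24$).

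The genuine problem is the tension between your final caveat and your claim that the computation ``reproduces the piecewise formula verbatim.'' The caveat is load-bearing, and if you actually carry out the base-case check you describe, the table is \emph{not} reproduced. The spectra quoted from Lemmas \ref{k11} and \ref{2p} are the generic ones; at the smallest member of each family the ``$0$ otherwise'' branch of Theorem \ref{quc1} is vacuous and the graph degenerates: $\mathcal{G}_{\Zl_3}=K_3$, while $\mathcal{G}_{\Zl_5}=C_5$, $\mathcal{G}_{\Zl_6}=C_6$ and $\mathcal{G}_{\Zl_{10}}=C_{10}$. There the eigenvalue $0$ (order $4$) is absent from $\spec_P$, and for the odd cycles $b_1-1+1_B=0$, so no $-1$ is adjoined in Lemma \ref{evu} either. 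Since the Grover walk on a cycle $C_n$ merely shifts arcs, its period is $n$; hence the periods at $n=3,5,6,10$ are $3,5,6,10$ rather than the tabulated $12$ or $20$. So your argument, as written, proves the table only for the non-degenerate members of each family; to finish you must either verify the degenerate cases separately (which shows the stated theorem itself needs correcting at those four values of $n$ --- an issue the paper's own proof sketch also overlooks) or restrict the uniform claim to the range of parameters where the generic spectra are valid.
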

	From Theorems \ref{main1} and \ref{gnp}, we find that the graphs in Figures \ref{fig:sub1} and \ref{fig:sub2} are periodic, with periods $12$ and  $20$, respectively.
	\section{Perfect state transfer}
	Perfect state transfer is not just a theoretical curiosity but also a practical necessity for advancing quantum technologies, including quantum algorithm design, quantum meteorology, communication efficiency and numerous other applications. Therefore, discovering new graphs that exhibit perfect state transfer is important. In this section, we characterize perfect state transfer on quadratic unitary Cayley graphs. In \cite{bhakta1}, the authors proved that periodicity is a necessary condition for the occurrence of perfect state transfer on a vertex-transitive graph. Note that a Cayley graph is vertex-transitive.
	\begin{theorem}\emph{\cite{bhakta1}}\label{thm2}
		Let $G$ be a vertex-transitive graph. If $G$ exhibits perfect state transfer, then it is periodic.
	\end{theorem}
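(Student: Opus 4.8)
The plan is to exploit the symmetry coming from vertex-transitivity to promote a single instance of perfect state transfer into a statement about the action of $U$ on the whole subspace $\mathcal{L}:=\mathrm{Im}(N^*)$, and then to read off periodicity via Lemma \ref{m1}. Throughout I will use the two elementary identities $NN^*=I$ and $NRN^*=P$, both immediate from the definitions of $N$, $R$ and $P$.

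First I would set up the symmetry. Suppose perfect state transfer occurs from $u$ to $v$ at time $\tau$, so $U^\tau N^*\eu=\gamma N^*\ev$ for some unimodular $\gamma$. Since $G$ is vertex-transitive, choose an automorphism $\sigma$ with $\sigma(u)=v$, and let $Q_\sigma$ be the permutation matrix it induces on $\Cl^{\mathcal{A}(G)}$. Because $\sigma$ preserves adjacency, degrees and arc reversal, one checks that $Q_\sigma U=U Q_\sigma$ and $Q_\sigma N^*\textbf{e}_w=N^*\textbf{e}_{\sigma(w)}$ for every vertex $w$. Applying $Q_\sigma$ to the transfer relation and using commutation gives $U^\tau N^*\ev=\gamma N^*\textbf{e}_{\sigma^2(u)}$, and an easy induction yields $U^{k\tau}N^*\eu=\gamma^k N^*\textbf{e}_{\sigma^k(u)}$ for all $k\geq 1$. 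If $m$ denotes the order of $\sigma$, then $U^{m\tau}N^*\eu=\gamma^m N^*\eu$. Conjugating once more by the automorphisms that carry $u$ to an arbitrary vertex $w$ upgrades this to $U^{m\tau}N^*\textbf{e}_w=\gamma^m N^*\textbf{e}_w$ for every $w$. Since the vectors $N^*\textbf{e}_w$ span $\mathcal{L}$, the operator $U^{m\tau}$ acts as the scalar $\gamma^m$ on all of $\mathcal{L}$.

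The final and most delicate step is to convert the condition ``$U^{m\tau}=\gamma^m I$ on $\mathcal{L}$'' into a spectral condition on $P$; the obstruction is that $\mathcal{L}$ has dimension only $|V(G)|$ and is far from spanning $\Cl^{\mathcal{A}(G)}$, so one cannot conclude $U^{m\tau}=\gamma^m I$ outright. Instead, for an eigenvector $\phi$ of $P$ with $P\phi=\mu\phi$ I would analyze the subspace $W_\phi=\mathrm{span}\{N^*\phi,\,RN^*\phi\}$. A direct computation using $NN^*=I$ and $NRN^*=P$ gives $U N^*\phi=RN^*\phi$ and $U RN^*\phi=2\mu RN^*\phi-N^*\phi$, so $W_\phi$ is $U$-invariant; moreover, when $\mu\neq\pm1$ the vectors $N^*\phi$ and $RN^*\phi$ are linearly independent (if $RN^*\phi=cN^*\phi$, applying $N$ forces $c=\mu$, while $R^2=I$ forces $c^2=1$), so $W_\phi$ is genuinely two-dimensional and $U|_{W_\phi}$ has the two distinct eigenvalues $e^{\pm\iu\arccos\mu}$, recovering the spectral mapping of Lemma \ref{evu}.

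Since $N^*\phi\in\mathcal{L}$ satisfies $U^{m\tau}N^*\phi=\gamma^m N^*\phi$, while $N^*\phi$ is not an eigenvector of $U|_{W_\phi}$ (in the basis $\{N^*\phi,\,RN^*\phi\}$ it is $(1,0)^{\!\top}$, which $U|_{W_\phi}$ sends to $(0,1)^{\!\top}$, hence its coordinates in the eigenbasis are both nonzero), the only way $U^{m\tau}$ can fix its direction is for $U^{m\tau}$ to act as a scalar on all of $W_\phi$; equivalently $e^{2\iu m\tau\arccos\mu}=1$. Hence $\arccos\mu$ is a rational multiple of $\pi$ and $\mu\in\Re$, while the cases $\mu=\pm1$ give $\mu=\cos 0,\cos\pi\in\Re$ trivially. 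Therefore $\spec_P(G)\subset\Re$, and Lemma \ref{m1} yields periodicity. I expect this last step---pinning down the behaviour on the genuinely two-dimensional blocks $W_\phi$ rather than merely on the fixed line---to be the crux, with the linear independence of $N^*\phi$ and $RN^*\phi$ (which fails exactly when $\mu=\pm1$) as the technical point to verify.
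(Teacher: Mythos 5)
This paper does not actually prove Theorem \ref{thm2}; it imports the statement from \cite{bhakta1}, so there is no in-text proof to compare against. Your argument is, as far as I can check, correct and self-contained, and it follows the route one would expect: each identity you invoke holds ($NN^*=I$, $UN^*=RN^*$, $URN^*\phi=2\mu RN^*\phi-N^*\phi$, and $Q_\sigma U=UQ_\sigma$ for the arc permutation induced by an automorphism), the two-dimensional invariant subspaces $W_\phi$ give exactly the spectral mapping of Lemma \ref{evu}, and the observation that $N^*\phi$ has nonzero components on both eigenvectors of $U|_{W_\phi}$ when $\mu\neq\pm1$ is precisely the point that converts ``$U^{m\tau}$ is a scalar on $\mathrm{Im}(N^*)$'' into $e^{2\iu m\tau\arccos\mu}=1$, hence $\spec_P(G)\subset\Re$ and periodicity via Lemma \ref{m1}. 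One genuine difference from the usual treatment (and from the argument in \cite{bhakta1}, which runs through the reversibility of perfect state transfer to get $U^{2\tau}N^*\eu=\gamma^2N^*\eu$ and then spreads this over the vertices by transitivity): you instead iterate a single automorphism $\sigma$ carrying $u$ to $v$ and use its order $m$ to close the orbit, which avoids quoting the ``transfer from $u$ to $v$ implies transfer from $v$ to $u$'' lemma at the cost of a larger exponent $m\tau$; since only the existence of some power of $U$ acting as a scalar on $\mathrm{Im}(N^*)$ is needed, this is harmless. Two small points you should make explicit: vertex-transitivity implies regularity, so $P=\frac{1}{k}A$ is symmetric with $\spec_P(G)\subset[-1,1]$ (otherwise $\arccos\mu$ and the unitary diagonalizability of $U|_{W_\phi}$ are not justified), and $N^*$ is injective (again from $NN^*=I$), so $N^*\phi\neq 0$ for an eigenvector $\phi$ of $P$.
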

	In Grover walks, the occurrence of perfect state transfer between vertex-type states has a strong connection with Chebyshev polynomials. The \emph{Chebyshev polynomial of the first kind}, denoted $T_n(x)$, is defined recursively as follows: $T_0(x)=1$, $T_1(x)=x$ and $$T_n(x)=2xT_{n-1}(x)-T_{n-2}(x) ~\text{for}~ n \geq 2.$$ It is well known that 
	\begin{equation}\label{chb}
		T_n(\cos\theta)=\cos(n\theta).
	\end{equation}
	The authors of \cite{bhakta1} provided a necessary and sufficient condition in terms of the Chebyshev polynomial of the first kind for the occurrence of perfect state transfer on circulant graphs.
	\begin{theorem}\emph{\cite{bhakta1}}\label{thl}
		Let $\mu_0,\mu_1,\hdots,\mu_{n-1}$ be the eigenvalues of the discriminant  of a circulant graph  $\cay(\Zl_n, S)$, where $\mu_j=\frac{1}{|S|}\sum_{s\in S} e^{-\frac{2\pi j s \iu }{n}}$ for $j\in\{0,\hdots, n-1\}$.   Then, perfect state transfer occurs in $\cay(\Zl_n, S)$ from a vertex $u$ to another vertex $v$ at time $\tau$ if and only if all of the following conditions hold.
		\begin{enumerate}[label=(\roman*)]
			\item $n$ is even and $u-v=\frac{n}{2}$.
			\item $T_\tau(\mu_j)=\pm1$ for $j\in\{0,\hdots,n-1\}$, where $T_n(x)$ is the Chebyshev polynomial of the first kind.
			\item $T_\tau(\mu_j)\neq T_\tau(\mu_{j+1})$ for $j\in\{0,\hdots,n-2\}.$
		\end{enumerate}
	\end{theorem}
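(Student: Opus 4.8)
The plan is to combine the circulant structure of $\cay(\Zl_n,S)$ with an exact operator identity tying powers of the time evolution matrix to Chebyshev polynomials of the discriminant. First I would record two identities that follow immediately from the definitions: $NN^{*}=I$ (since each vertex $x$ is the terminus of exactly $\deg x$ arcs) and $UN^{*}=RC N^{*}=R(2N^{*}N-I)N^{*}=RN^{*}$. Together with $URN^{*}=2RN^{*}P-N^{*}$ (using $R^{2}=I$ and $NRN^{*}=P$), an induction on $\tau$ shows $U^{\tau}N^{*}=RN^{*}p_{\tau}(P)+N^{*}q_{\tau}(P)$, where $p_{\tau+1}=2Pp_{\tau}+q_{\tau}$, $q_{\tau+1}=-p_{\tau}$ with $p_{0}=0$, $p_{1}=1$; eliminating $q$ shows $p_{\tau}$ obeys the Chebyshev recursion $p_{\tau+1}=2Pp_{\tau}-p_{\tau-1}$. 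Applying $N$ on the left then collapses the expression to the central identity $NU^{\tau}N^{*}=Pp_{\tau}(P)-p_{\tau-1}(P)=T_{\tau}(P)$, the first-kind Chebyshev polynomial evaluated at the discriminant.

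With this identity the definition of perfect state transfer can be rewritten. If $U^{\tau}N^{*}\eu=\gamma N^{*}\ev$, applying $N$ gives $T_{\tau}(P)\eu=\gamma\ev$. For the converse I would argue that $U^{\tau}N^{*}\eu$ is again a vertex-type state, i.e.\ lies in $\mathrm{Im}\,N^{*}$, exactly when condition (ii) holds: by the spectral mapping theorem (Lemma \ref{evu}), on the two-dimensional $U$-invariant plane attached to a discriminant eigenvalue $\mu_{j}=\cos\theta_{j}$ the matrix $U$ acts as rotation by $\theta_{j}$, and the vertex-type axis of that plane returns to $\mathrm{Im}\,N^{*}$ after $\tau$ steps precisely when $\sin(\tau\theta_{j})=0$, that is when $T_{\tau}(\mu_{j})=\cos(\tau\theta_{j})=\pm1$. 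Granting (ii), the injectivity of $N^{*}$ (a consequence of $NN^{*}=I$) promotes $T_{\tau}(P)\eu=\gamma\ev$ to the full equality $U^{\tau}N^{*}\eu=N^{*}T_{\tau}(P)\eu=\gamma N^{*}\ev$. Hence perfect state transfer from $u$ to $v$ at time $\tau$ is equivalent to condition (ii) together with $T_{\tau}(P)\eu=\gamma\ev$ for some unimodular $\gamma$.

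The third step is to decode $T_{\tau}(P)\eu=\gamma\ev$ through the Fourier diagonalization $T_{\tau}(P)=\sum_{j}T_{\tau}(\mu_{j})E_{j}$, where $E_{j}$ is the rank-one projection onto the $j$-th character and, by Lemma \ref{reg}, $\mu_{j}=\tfrac{1}{|S|}\sum_{s\in S}e^{-2\pi\iu js/n}$. Since $T_{\tau}(P)$ is then circulant, the equation forces the generating sequence of $T_{\tau}(P)$ to be a single spike at the index $v-u$, equivalently $T_{\tau}(\mu_{j})=\gamma\,e^{2\pi\iu j(v-u)/n}$ for every $j$. As each $T_{\tau}(\mu_{j})$ is real by (ii), and $\mu_{0}=1$ with $T_{\tau}(1)=1$, evaluating at $j=0$ forces $\gamma=1$ and then $e^{2\pi\iu j(v-u)/n}=\pm1$ for all $j$; because $u\neq v$ this is possible only when $n$ is even and $v-u\equiv\tfrac{n}{2}\pmod{n}$, which is condition (i). Substituting back gives $T_{\tau}(\mu_{j})=(-1)^{j}$, precisely condition (ii) together with the alternation $T_{\tau}(\mu_{j})\neq T_{\tau}(\mu_{j+1})$ of condition (iii); reading the equivalences in reverse yields sufficiency.

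The step I expect to be the main obstacle is the rigorous equivalence between condition (ii) and the assertion that $U^{\tau}N^{*}\eu$ remains in $\mathrm{Im}\,N^{*}$. Here one should first observe that $N^{*}\eu\in(\ker N)^{\perp}=\mathrm{Im}\,N^{*}$, so the residual $\pm1$-eigenspaces of $U$ recorded by the multiplicities $b_{1}$ and $1_{B}$ of Lemma \ref{evu} --- which lie inside $\ker N$ --- are never excited, reducing the analysis to the planes attached to the $\mu_{j}$. The delicate remaining point is the degenerate planes with $\mu_{j}\in\{\pm1\}$ (forced at $j=0$, and at $j=n/2$ in the bipartite case), where the rotation picture collapses; there one must verify directly that the vertex-type component returns to $\mathrm{Im}\,N^{*}$ and that $T_{\tau}(\mu_{j})=(\pm1)^{\tau}=\pm1$ holds automatically, so that these modes impose no extra constraint on $\tau$.
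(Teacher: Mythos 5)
The paper does not prove this theorem; it is imported verbatim from \cite{bhakta1}, and your argument is a correct reconstruction along the standard lines of that source, resting on the operator identity $NU^{\tau}N^{*}=T_{\tau}(P)$ and the Fourier diagonalization of the circulant matrix $T_{\tau}(P)$. The one step worth making explicit is that deducing condition (ii) for \emph{every} $j$ from ``$U^{\tau}N^{*}\eu\in\mathrm{Im}\,N^{*}$'' requires each character eigenvector of $P$ to have nonzero $u$-th coordinate (so that every two-dimensional plane is actually excited), which holds automatically for circulant graphs and is the implicit reason your rotation argument closes.
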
	
	The next two lemmas are easy consequences of the preceding theorem.
	\begin{lema}\emph{\cite{bhakta1}}\label{cycle}
		A cycle on $n$ vertices exhibits perfect state transfer if and only if $n$ is even.
	\end{lema}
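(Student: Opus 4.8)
The plan is to realize the cycle $C_n$ as the circulant graph $\cay(\Zl_n,\{1,n-1\})$ and then feed it into the characterization of perfect state transfer on circulant graphs provided by Theorem~\ref{thl}. The first step is to compute the discriminant eigenvalues explicitly. Here $S=\{1,n-1\}$, so $|S|=2$, and since $n-1\equiv -1\pmod n$ we get $e^{-2\pi j(n-1)\iu/n}=e^{2\pi j\iu/n}$; hence $\mu_j=\tfrac12\left(e^{-2\pi j\iu/n}+e^{2\pi j\iu/n}\right)=\cos\!\left(\tfrac{2\pi j}{n}\right)$ for each $j\in\{0,\ldots,n-1\}$. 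This is precisely the form in which the identity $T_\tau(\cos\theta)=\cos(\tau\theta)$ from \eqref{chb} becomes the main computational tool, since it lets us evaluate $T_\tau(\mu_j)$ without ever expanding the Chebyshev recursion.

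For the necessity direction I would argue directly from Theorem~\ref{thl}: if perfect state transfer occurs at all, then in particular condition (i) holds, and that condition already asserts that $n$ is even. So no additional work is needed for the ``only if'' part. For the sufficiency direction, assume $n$ is even and take the candidate time $\tau=\tfrac{n}{2}$ together with vertices satisfying $u-v=\tfrac{n}{2}$, which fulfils condition (i). Applying the Chebyshev identity gives $T_{n/2}(\mu_j)=\cos\!\left(\tfrac{2\pi j}{n}\cdot\tfrac{n}{2}\right)=\cos(\pi j)=(-1)^j$. This single evaluation settles both remaining conditions simultaneously: condition (ii) holds because $(-1)^j=\pm1$ for every $j$, and condition (iii) holds because consecutive values $(-1)^j$ and $(-1)^{j+1}$ always differ. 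By Theorem~\ref{thl}, perfect state transfer then occurs from $u$ to $v=u-\tfrac{n}{2}$ at time $\tau=\tfrac{n}{2}$.

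I do not expect a genuine obstacle here, as the lemma is essentially a direct specialization of Theorem~\ref{thl}. The only step requiring care is the clean evaluation $T_{n/2}\!\left(\cos\tfrac{2\pi j}{n}\right)=(-1)^j$ and the observation that this one formula verifies conditions (ii) and (iii) at once; the repeated discriminant eigenvalues (note $\mu_j=\mu_{n-j}$) cause no trouble, since the alternating values $(-1)^j$ already guarantee that consecutive Chebyshev values differ.
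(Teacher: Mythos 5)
Your proposal is correct and matches the paper's intent exactly: the paper gives no separate proof, stating only that this lemma is an ``easy consequence'' of Theorem~\ref{thl}, and your argument --- realizing $C_n$ as $\cay(\Zl_n,\{1,n-1\})$, computing $\mu_j=\cos\left(\tfrac{2\pi j}{n}\right)$, reading off necessity from condition (i), and verifying conditions (ii) and (iii) at $\tau=\tfrac{n}{2}$ via $T_{n/2}(\mu_j)=(-1)^j$ --- is precisely that deduction. No gaps.
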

	\begin{lema}\emph{\cite{bhakta1}}\label{complete}
		The only complete graph $K_n$ exhibiting perfect state transfer is $K_2$.
	\end{lema}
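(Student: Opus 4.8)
\noindent The plan is to realise $K_n$ as a circulant graph and apply Theorem \ref{thl} directly. First I would observe that $K_n=\cay(\Zl_n,\Zl_n\setminus\{0\})$, so the connection set is $S=\Zl_n\setminus\{0\}$ with $|S|=n-1$. Feeding this into the discriminant eigenvalue formula $\mu_j=\frac{1}{|S|}\sum_{s\in S}e^{-\frac{2\pi j s\iu}{n}}$ of Theorem \ref{thl}, the case $j=0$ gives $\mu_0=1$, while for $j\in\{1,\hdots,n-1\}$ the partial sum $\sum_{s=1}^{n-1}e^{-\frac{2\pi j s\iu}{n}}$ equals $-1$ (since the full sum over all $n$-th roots of unity vanishes), so that $\mu_j=-\frac{1}{n-1}$ for every nonzero $j$. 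Thus the spectrum of the discriminant is highly degenerate, with all indices $j\geq 1$ sharing a common value.

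\noindent The heart of the argument is then condition (iii) of Theorem \ref{thl}, which demands $T_\tau(\mu_j)\neq T_\tau(\mu_{j+1})$ for every $j\in\{0,\hdots,n-2\}$. For $n\geq 3$ the index $j=1$ lies in this range and $\mu_1=\mu_2=-\frac{1}{n-1}$, whence $T_\tau(\mu_1)=T_\tau(\mu_2)$ for all $\tau$; condition (iii) therefore fails irrespective of $\tau$, and $K_n$ admits no perfect state transfer when $n\geq 3$. This single observation handles both parities uniformly, so I would not need to separately invoke condition (i) to rule out the odd cases.

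\noindent For $n=2$ the eigenvalues are $\mu_0=1$ and $\mu_1=-1$, and I would check the three conditions directly: $n$ is even with $u-v=1=\frac{n}{2}$; both $T_\tau(1)=1$ and $T_\tau(-1)=(-1)^\tau$ lie in $\{\pm1\}$; and for odd $\tau$ the only relevant index $j=0$ gives $T_\tau(\mu_0)=1\neq -1=T_\tau(\mu_1)$, so all conditions hold (already at $\tau=1$). The only point requiring care is the degeneracy of the spectrum for $n\geq 3$: one must confirm that the repeated value $-\frac{1}{n-1}$ genuinely occupies two \emph{consecutive} indices, so that condition (iii) is violated rather than vacuously satisfied. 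Since every nonzero $j$ yields the same $\mu_j$, this is immediate, and the entire statement collapses to a short spectral computation plugged into Theorem \ref{thl}.
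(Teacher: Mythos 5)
Your proposal is correct and follows exactly the route the paper intends: the lemma is stated as an easy consequence of Theorem \ref{thl}, and your computation of the discriminant spectrum of $K_n=\cay(\Zl_n,\Zl_n\setminus\{0\})$ (namely $\mu_0=1$ and $\mu_j=-\tfrac{1}{n-1}$ for $j\neq 0$) together with the failure of condition (iii) at the consecutive indices $j=1,2$ for $n\geq 3$, plus the direct verification for $n=2$ at $\tau=1$, is precisely the intended argument. No gaps.
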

	The following theorem determines the values of $n$ for which the quadratic unitary Cayley graph $\Gn$ exhibits perfect state transfer.
	
	\begin{theorem}\label{main2}
		The quadratic unitary Cayley graph $\Gn$ exhibits perfect state transfer if and only if $n\in\{ 2^\alpha3^\beta,~10,~20\},$ where $\alpha$ and $\beta$ are nonnegative integers such that $1\leq \alpha\leq3$ and $0\leq\beta\leq1$.
	\end{theorem}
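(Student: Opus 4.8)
The plan is to combine the periodicity classification with the Chebyshev criterion of Theorem \ref{thl}. By Theorem \ref{thm2}, perfect state transfer forces periodicity, so Theorem \ref{main1} already confines $n$ to the two families $2^\alpha3^\beta$ and $2^\delta5^\gamma$ appearing there; moreover condition (i) of Theorem \ref{thl} requires $n$ to be even, which immediately discards the odd members $3^\beta$ and $5^\gamma$. Thus it remains only to decide perfect state transfer for the even periodic graphs $\Gn$, and since $\Gn$ is vertex-transitive I would work throughout with the single antipodal pair $0$ and $n/2$.

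The key reduction is to recast conditions (ii) and (iii) of Theorem \ref{thl} into one usable form. Writing $\mu_j=\cos\theta_j$ and using $T_\tau(\cos\theta)=\cos(\tau\theta)$, condition (ii) says each $T_\tau(\mu_j)\in\{\pm1\}$ while condition (iii) forces consecutive values to differ; since $\mu_0=1$ gives $T_\tau(\mu_0)=1$, the two conditions together are equivalent to the existence of a $\tau$ with $T_\tau(\mu_j)=(-1)^j$ for all $j$. From this I would extract the clean necessary condition that drives the argument: if two indices $j,j'$ satisfy $\mu_j=\mu_{j'}$ but $j\not\equiv j'\pmod 2$, then $T_\tau(\mu_j)$ would have to equal both $(-1)^j$ and $(-1)^{j'}$, which is impossible, so perfect state transfer cannot occur. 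In particular two consecutive equal eigenvalues, such as two consecutive zeros, already rule it out.

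For the negative direction I would show that every even periodic $n$ outside the claimed list attains the discriminant eigenvalue $0$ at both an even and an odd index, contradicting the parity condition above. Using Theorem \ref{quc1} and Theorem \ref{quc2}, $\mu_j=\lambda_j/|V_n|$ vanishes whenever the relevant Gauss-sum factor does. Concretely, if $3^\beta\divides n$ with $\beta\ge2$ or $5^\gamma\divides n$ with $\gamma\ge2$, then the corresponding coordinates of $j=1$ and $j=2$ fall outside the thin support $p^{t-1}\Zl_p$, so $\mu_1=\mu_2=0$; and if $2^\alpha\divides n$ with $\alpha\ge4$, then every odd $j$ gives $\chi_{a_1}(Q_{2^\alpha})=0$, hence $\mu_j=0$, while at the even index $a_1=2^{\alpha-2}$ the factor $\chi_{a_1}(Q_{2^\alpha})$ is purely imaginary, so $\mu=0$ there as well. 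In each family this produces opposite-parity zeros and destroys perfect state transfer.

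For the positive direction I would observe that for $n\in\{2,4,6,8,10,12,24\}$ one has $V_n=\{\pm1\}$, so $\Gn$ is the even cycle $C_n$, which exhibits perfect state transfer by Lemma \ref{cycle} (and by Lemma \ref{complete} when $n=2$). The only remaining value is $n=20$, where $V_{20}=\{\pm1,\pm9\}$ and $\spec_P(\Gn)=\{\pm1,\pm\tfrac14\pm\tfrac{\sqrt5}{4},0\}$; here I would verify directly from $\mu_j=\cos(\tfrac{\pi j}{2})\cos(\tfrac{2\pi j}{5})$ that $\tau=10$ satisfies $T_{10}(\mu_j)=(-1)^j$ for every $j$, giving perfect state transfer at time $10$. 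I expect the main obstacle to be the negative direction: making the vanishing-coordinate argument uniform across the mixed prime-power families, especially $2^\alpha3^\beta$ with $\alpha\ge4$ where neither coordinate alone forces two consecutive zeros and one must instead locate an even index at which the real part of a product of Gauss sums happens to vanish, is the delicate bookkeeping on which the whole proof turns.
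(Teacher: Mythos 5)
Your proposal is correct and follows essentially the same route as the paper: reduce to even periodic $n$ via Theorem \ref{thm2}, Theorem \ref{main1} and condition (i) of Theorem \ref{thl}; identify the graphs for $n\in\{2,4,6,8,10,12,24\}$ as $K_2$ or even cycles; verify $n=20$ directly at $\tau=10$; and kill all remaining cases by exhibiting repeated zero discriminant eigenvalues that violate the Chebyshev criterion. Your parity reformulation $T_\tau(\mu_j)=(-1)^j$ is a mild streamlining of the paper's use of consecutive equal eigenvalues, but the substance is the same.
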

	\begin{proof}
		If the quadratic unitary Cayley graph $\Gn$ exhibits perfect state transfer, then by Theorems \ref{thm2} and \ref{main1}, we find that $n\in\{ 2^\alpha3^\beta,~5^\gamma,~2\cdot5^\gamma,~4\cdot5^\gamma\}$, where $\alpha$, $\beta$ and $\gamma$ are nonnegative integers such that $\alpha+\beta\neq0$ and $\gamma\geq1$. By Theorem \ref{thl}, if $\Gn$ exhibits perfect state transfer, then $n$ must be even. Therefore, $n\in\{2^\alpha3^\beta,~2\cdot5^\gamma,~4\cdot5^\gamma\}$, where $\alpha\geq1$, $\beta\geq0$ and $\gamma\geq 1$.
		
		For $n=2^\alpha3^\beta$ ($1\leq \alpha\leq3$ and $0\leq\beta\leq1$) and $n=10$, the unitary Cayley graph $\Gn$ is $K_2$ and the cycle $C_n$ for $n>3$. Therefore by Lemmas \ref{cycle} and \ref{complete}, the graph $\Gn$ exhibits perfect state transfer for $n=2^\alpha3^\beta$ ($1\leq \alpha\leq3$ and $0\leq\beta\leq1$) and $n=10$

		For $n=20$, by Theorem \ref{quc2}, the period of the graph is $20$. Therefore, if perfect state transfer occurs in $\Gn$ at the minimum time $\tau$, then $\tau$ must be in $\{1,\hdots,19\}$.  Now $0,\frac{1}{4}+\frac{\sqrt{5}}{4}\in \spec_P(\mathcal{G}_{\Zl_{20}})$, and therefore by Theorem \ref{thl}, $T_\tau(0)=\pm 1$ and $T_\tau\left(\frac{1}{4}+\frac{\sqrt{5}}{4}\right)=\pm 1 $, that is, $T_\tau(\cos\frac{\pi}{2})=\pm1$ and $T_\tau(\cos \frac{\pi}{5})=\pm 1$. Hence by \eqref{chb}, $\cos(\frac{\tau \pi}{2})=\pm 1$ and $\cos(\frac{\tau \pi}{5})=\pm1$. Thus, $\tau\in 2\Zl$ and $\tau\in5\Zl$, and so $\tau\in10\Zl$. Therefore, $\tau=10$.

		Using Theorems \ref{quc1} and \ref{quc2}, the discriminant eigenvalues of $\mathcal{G}_{\Zl_{20}}$ are given by
		$$\mu_j= \left\{ \begin{array}{ll}
			\cos \frac{\pi}{2} &\mbox{ if }	\text{$j$ is odd} \\ 
			\cos \frac{j\pi}{10} &\mbox{ if }	j\in\{0,2,4,6,8,10\} \\ 
			\cos \frac{(20-j)\pi}{10} &\mbox{ if }	j\in\{12,14,16,18\}.
		\end{array}\right.$$ 
		Now noting that $T_n(\cos \theta)=\cos n\theta$, we have $T_{10}(\mu_j)=\pm1$ for $j\in\{0,\hdots,19\}$ and $T_{10}(\mu_j)\neq T_{10}(\mu_{j+1})$ for $j\in\{0,\hdots,18\}$. Therefore by Theorem \ref{thl}, the graph $\mathcal{G}_{\Zl_{20}}$ exhibits perfect state transfer.

		For the remaining values of $n$, we consider the following cases. Let $\mu_j$ be the discriminant eigenvalues defined as in Theorem \ref{thl} for $j\in\Zl_n$.
		
		\noindent\textbf{Case 1.} $n=2^\alpha3^\beta$  ($\alpha\geq 4$ and $\beta=0$).  By Theorems \ref{quc1} and \ref{quc2}, we have $\mu_4=0=\mu_{5}$ for $\alpha=4$. For $\alpha\geq5$, we have $\mu_1=0=\mu_2$.

		\noindent\textbf{Case 2.} $n=2^\alpha3^\beta$ ($\alpha\geq 4$ and $\beta=1$). In this case, we find that	$\mu_a=0=\mu_{a+1}$ for $a=(4,0)\in \Zl_{2^4}\times\Zl_{3}$. If $\alpha\geq5$, we have $\mu_a=0=\mu_{a+1}$ for $a=(2,0)\in \Zl_{2^\alpha}\times\Zl_{3}$ .

		\noindent\textbf{Case 3.} $n=2^\alpha3^\beta$ ($\alpha\geq1$ and $\beta\geq2$).  In this case, $\mu_a=0=\mu_{a+1}$ for $a=(0,1)\in \Zl_{2^\alpha}\times\Zl_{3^\beta}$. 
		
		\noindent\textbf{Case 4.} $n=2\cdot5^\gamma$ $(\gamma\geq2)$ or $n=4\cdot5^\gamma$ $(\gamma\geq2)$. In this case also, $\mu_a=0=\mu_{a+1}$ for $a=(1,0)\in \Zl_{5^\gamma}\times\Zl_{2^\alpha}$, where $1\leq\alpha\leq2$.

		For each of the preceding four cases, we find some $a\in \Zl_n$ such that $T_\tau(\mu_a)=T_\tau(\mu_{a+1})$ for all  $\tau\in \Nl$. Therefore by Theorem \ref{thl}, perfect state transfer does not occur in these cases. Thus, the result follows.
	\end{proof}
	Theorem \ref{main2} shows that the graph in Figure~\ref{fig:sub1} does not exhibit perfect state transfer. However, the graph in Figure~\ref{fig:sub2} exhibits perfect state transfer from vertex $a$ to vertex $b$ at time $10$, where $|a-b|=10$.
	\subsection*{Acknowledgements}
	The first author acknowledges the support provided by the Prime Minister's Research Fellowship (PMRF), Government of India (PMRF-ID: 1903298). The second author thanks the Science and Engineering Research Board (SERB), Government of India, for supporting this work under the MATRICS Project [File No. MTR/2021/000075].
	%%%%%%%%%%%%%%%%%%%%%%%%%%%%%%%%%%%%%%%%%%%%%%%%%%%%%%%%%%%%%%%%%%%%%%%%%%%%%%%%%%%%%%%%%%%%%%%%%%%%%%%%%%%%%%%%


\begin{thebibliography}{10}
		
		
		
		\bibitem{quantum}
		D. Aharonov, A. Ambainis, J. Kempe, and U. Vazirani.
		\newblock Quantum walks on graphs.
		\newblock {\em Proceedings of the 33rd Annual ACM Symposium on Theory of Computing}. 50--59, 2001.
		
		\bibitem{random}
		Y. Aharonov, L. Davidovich, and N. Zagury.
		\newblock Quantum random walks.
		\newblock {\em Physical Review A}. 48(2):1687--1690, 1993.
		
		
		
		\bibitem{banerjee}
		A. Banerjee.
		\newblock Discrete quantum walks on the symmetric group.
		\newblock {\em Quantum Studies: Mathematics and Foundations}. 11:477--490, 2024.
		
		
		\bibitem{barr}
		K. Barr, T. Proctor, D. Allen, and V. Kendon.
		\newblock Periodicity and perfect state transfer in quantum walks on variants of cycles.
		\newblock {\em Quantum Information and Computation}. 14(5\&6):417--438, 2014.
		
		\bibitem{qucfirst}
		N. de Beaudrap.
		\newblock On restricted unitary Cayley graphs and symplectic transformations modulo $n$.
		\newblock {\em The Electronic Journal of Combinatorics}. 17:\#R69, 2010.
		
		\bibitem{bermond}
		J.C. Bermond, F. Comellas, and D.F. Hsu.
		\newblock  {Distributed loop computer networks: a survey}.
		\newblock {\em Journal of Parallel and Distributed Computing}. 24:2--10, 1995.
		
		\bibitem{bhakta1}
		K. Bhakta and B. Bhattacharjya.
		\newblock Grover walks on unitary Cayley graphs and integral regular graphs.
		\newblock {\em arXiv:2405.01020}. 2024.
		
		
		\bibitem{polytime}
		G. Coutinho and C. Godsil.
		\newblock Perfect state transfer is poly-time.
		\newblock {\em Quantum Information and Computation}. 17:495--502, 2017.
		
		
		\bibitem{periodic}
		C. Godsil.
		\newblock Periodic graphs.
		\newblock {\em The Electronic Journal of Combinatorics}. 18(1):\#P23, 2011.
		
		
		\bibitem{godsildct}
		C. Godsil and H. Zhan.
		\newblock Discrete-time quantum walks and graph structures.
		\newblock {\em Journal of Combinatorial Theory, Series A}. 167:181--212, 2019.
		
		\bibitem{higu}
		Y.~Higuchi, N.~Konno, I.~Sato, and E.~Segawa.
		\newblock A note on the discrete-time evolutions of quantum walk on a graph.
		\newblock {\em Journal of Math-for-Industry}. 5:103--109, 2013.
		
		\bibitem{higuchi1}
		Y.~Higuchi, N.~Konno, I.~Sato, and E.~Segawa.
		\newblock Periodicity of the discrete-time quantum walk on a finite graph.
		\newblock {\em Interdisciplinary Information Sciences}. 23(1):75--86, 2017.
		
		\bibitem{spec}
		Y.~Higuchi, N.~Konno, I.~Sato, and E.~Segawa.
		\newblock Spectral and asymptotic properties of Grover walks on crystal lattices.
		\newblock {\em Journal of Functional Analysis}. 267(11):4197–4235, 2014.
		
		\bibitem{quc}
		J. Huang.
		\newblock On the quadratic unitary Cayley graphs.
		\newblock {\em Linear Algebra and its Applications}. 644:219--233, 2022.
		
		
		\bibitem{ito}
		N.~Ito, T.~Matsuyama, and T.~Tsurii.
		\newblock Periodicity of Grover walks on complete graphs with self-loops.
		\newblock {\em Linear Algebra and its Applications}. 599:121--132, 2020.
		
		\bibitem{complex}
		A. Jeffrey.
		\newblock Complex Analysis and Applications.
		\newblock {\em Chapman \& Hall/CRC, Boca Raton, FL,
			Second Edition}. 2006.
		
		
		
		\bibitem{ucg}
		W.~Klotz and T.~Sander.
		\newblock Some properties of unitary Cayley graphs.
		\newblock {\em The Electronic Journal of Combinatorics}. 14:\#R45, 2007.
		
		
		\bibitem{kubota}
		S. Kubota.
		\newblock Combinatorial necessary conditions for regular graphs to induce periodic quantum walks.
		\newblock {\em Linear Algebra and its Applications}. 673:259--279, 2023.
		
		\bibitem{bipartite}
		S.~Kubota.
		\newblock Periodicity of Grover walks on bipartite regular graphs with at most five distinct eigenvalues.
		\newblock {\em Linear Algebra and its Applications}. 654:125--142, 2022.
		
		\bibitem{vertextype}
		S.~Kubota and E.~Segawa.
		\newblock Perefct state transfer in Grover walks between states associated to vertices of a graph.
		\newblock {\em Linear Algebra and its Applications}. 646:238--251, 2022.
		
		\bibitem{qq}
		S.~Kubota, E.~Segawa, and T.~Taniguchi.
		\newblock Quantum walks defined by digraphs and generalized Hermitian adjacency matrices.
		\newblock {\em Quantum Information Processing}. 20:95, 2021.
		
		\bibitem{bethetrees}
		S.~Kubota, E.~Segawa, T.~Taniguchi, and Y.~Yoshie.
		\newblock Periodicity of Grover walks on generalized Bethe trees.
		\newblock {\em Linear Algebra and its Applications}. 554:371--391, 2018.
		
		\bibitem{mixedpaths}
		S. Kubota, H. Seikdo, and H. Yata.
		\newblock Periodicity of quantum walks defined by mixed paths and mixed cycles.
		\newblock {\em Linear Algebra and its Applications}. 630:15--38, 2021.		
		
		
		\bibitem{liu}
		X.G. Liu and S.M. Zhou.
		\newblock Quadratic unitary Cayley graphs of finite commutative rings.
		\newblock {\em Linear Algebra and its Applications}. 479:73--90, 2015.
		
		
		\bibitem{gauss}
		B. Luong.
		\newblock Fourier Analysis on Finite Abelian Groups.
		\newblock {\em Birkh\"{a}user, Boston}. 2009.	
		
		
		\bibitem{galois}
		P. Morandi.
		\newblock Field and Galois Theory.
		\newblock {\em Springer, New York}. 1996.
		
		\bibitem{zucker}
		I. Niven, H.S. Zuckerman, and H.L. Montgomery.
		\newblock An Introduction to the Theory of Numbers.
		\newblock {\em John Wiley \& Sons}. 1991.
		
		
		\bibitem{pal}
		H. Pal and B. Bhattacharjya.
		\newblock  {Perfect state transfer on gcd-graphs}.
		\newblock {\em Linear Multilinear Algebra}. 65(11):2245--2256, 2017.
		
		
		\bibitem{sarkar2}
		R.S. Sarkar and B. Adhikari.
		\newblock  Discrete-time quantum walks on Cayley graphs of Dihedral groups using generalized Grover coins.
		\newblock {\em Quantum Information Processing}. 23(5):172, 2024.
		
		\bibitem{sarkar1}
		R.S. Sarkar, A. Mandal, and  B. Adhikari.
		\newblock  Periodicity of lively quantum walks on cycles with generalized Grover coin.
		\newblock {\em Linear Algebra and its Applications}. 604:399--424, 2020.
		
		\bibitem{singh}
		S. Singh, B. Adhikari, S. Dutta, and D. Zueco.
		\newblock  Perfect state transfer on hypercubes and its implementation using superconducting qubits.
		\newblock {\em Physical Review A}. 102(6):062609, 2020.
		
		\bibitem{soffia}
		A. Soff\'ia and C. Godsil.
		\newblock  {On state transfer in {C}ayley graphs for abelian groups}.
		\newblock {\em Quantum Information Processing}. 22(1):24, 2023.
		
		\bibitem{pst_symm}
		M. \v{S}tefa\v{n}\'ak and S. Skoup\'y.
		\newblock Perfect state transfer by means of discrete-time quantum walk search algorithms on highly symmetric graphs.
		\newblock {\em Physical Review A}. 94(2):022301, 2016.
		
		
		\bibitem{dqw3}
		M. \v{S}tefa\v{n}\'ak and S. Skoup\'y.
		\newblock Perfect state transfer by means of discrete-time quantum walk on complete bipartite graphs.
		\newblock {\em Quantum Information Processing}. 16(3):72, 2017.
		
		
		

		
		
		\bibitem{rep}
		B. Steinberg.
		\newblock Representation Theory of Finite Groups: An Introductory Approach.
		\newblock {\em Springer, New York}. 2009.
		
		
		
		\bibitem{paley_number}
		C.H.~Yip.
		\newblock On the clique number of {P}aley graphs of prime power order.
		\newblock {\em Finite Fields and their Applications}. 77:101930, 2022.
		
		
		\bibitem{yoshie1}
		Y.~Yoshie.
		\newblock Characterizations of graphs to induce periodic Grover walk.
		\newblock {\em Yokohama Mathematical Journal}. 63:9--23, 2017.
		
		\bibitem{yoshie2}
		Y.~Yoshie.
		\newblock Odd-periodic Grover Walks.
		\newblock {\em Quantum Information Processing}. 22:316, 2023.
		
		\bibitem{yoshie3}
		Y.~Yoshie.
		\newblock Periodicity of Grover walks on distance-regular graphs.
		\newblock {\em Graphs and Combinatorics}. 35:1305--1321, 2019.	
		
		\bibitem{zhan}
		H. Zhan.
		\newblock An infinite family of circulant graphs with perfect state transfer in discrete quantum walks.
		\newblock {\em Quantum Information Processing}. 18(12):1--26, 2019.
		
		
		
		
		
	\end{thebibliography}
\end{document}